\newcommand\lieg{{\mathfrak g}}
\newcommand\liek{{\mathfrak k}}
\newcommand\liesl{{\mathfrak sl}}
\newcommand\liegl{{\mathfrak gl}}
\newcommand\liem{{\mathfrak m}}
\newcommand\ee{{\mathbf e}}
\newcommand\mm{{\mathbf m}}
\newcommand\kk{{\mathbf k}}
\newcommand\pp{{\mathbf p}}
\newcommand\rr{{\mathbf r}}
\newcommand\CC{\mathbb C}
\newcommand\RR{\mathbb R}
\newcommand\ZZ{\mathbb Z}
\newcommand\NN{\mathbb N}
\newcommand\A{{\mathcal A}}
\newcommand\GL{{\mathrm{GL}}}
\newcommand\SU{{\mathrm{SU}}}
\newcommand\U{{\mathrm{U}}}
\newcommand\adj{{\mathrm{adj}}}
\newcommand\End{\operatorname{End}}
\newcommand\re{\operatorname{Re}}
\newcommand\im{\operatorname{Im}}
\newcommand\tr{\operatorname{tr}}
\newcommand\matc[4]{\left( {#1\@@atop #3}{#2\@@atop #4}\right)}
\newcommand\matr[4]{\left( {\hfill #1\@@atop\hfill #3}{\hfill
#2\@@atop\hfill #4}\right)}
\newcommand\matl[4]{\left( { #1\@@atop #3}{ #2\@@atop\hfill #4}\right)}
\newcommand\widearray[1]{\renewcommand\arraystretch{1.4} \begin{array}{#1}}
\newcommand\vzm[2]{\mathchoice{\left\{\, #1 : #2 \,\right\}}{\{\,
#1:\allowbreak #2 \,\}}{\{ #1 :\allowbreak #2 \}}
{\{ #1 :\allowbreak #2 \}}}
\theoremstyle{plain}
\newtheorem{thm}{Theorem}[section]
\newtheorem{lem}[thm]{Lemma}
\newtheorem{prop}[thm]{Proposition}
\newtheorem{cor}[thm]{Corollary}
\theoremstyle{definition}
\newtheorem{defn}[thm]{Definition}
\newtheorem{remark}{Remark}[section]
\newtheorem*{prop3.3}{Proposition \ref{expD1}}
\newtheorem*{prop3.4}{Proposition \ref{expD2}}
\newtheorem*{prop3.5}{Proposition \ref{expE1}}
\newtheorem*{prop3.6}{Proposition \ref{expE2}}
\title[One-step spherical functions  of the pair $(\SU(n+1),\U(n))$ ]{One-step spherical functions of the pair $(\SU(n+1),\U(n))$ }
\author{In\'es Pacharoni}
\author{Juan Tirao}
\address{CIEM-FaMAF, Universidad Nacional de C\'ordoba, 5000 C\'ordoba, Argentina}
\email{pacharon@famaf.unc.edu.ar, tirao@famaf.unc.edu.ar}
\dedicatory{Dedicated to our teacher and friend Joe Wolf.}
\thanks{{\sc  CIEM-FaMAF, Universidad Nacional de C\'ordoba, 5000 C\'ordoba, Argentina} }
\thanks {{\em E-mail address}: \texttt {pacharon\@@famaf.unc.edu.ar, tirao\@@famaf.unc.edu.ar}}
\begin{document}

\begin{abstract}
The aim of this paper is to determine all irreducible spherical
functions of  the pair $(G,K)=(\SU(n+1), \U(n))$, where the highest weight of their  $K$-types
$\pi$ are of the form $(m+\ell,\dots,m+\ell,m,\dots,m)$.
Instead of looking at a  spherical function $\Phi$ of type $\pi$ we look at a matrix-valued function $H$ defined on a section of the $K$-orbits in an affine subvariety of $P_n(\CC)$.
The function  $H$ diagonalizes, hence it can be identified with a column vector-valued function.
The irreducible spherical functions of type $\pi$  turn out to be parameterized by
$S=\{(w,r)\in \ZZ\times\ZZ:\, 0\leq w, \,0 \le r\le\ell, \,0\leq  m+w+r \}$.
A key result to characterize the associated function $H_{w,r}$ is the existence of a matrix-valued polynomial function $\Psi$ of degree $\ell$ such that
$F_{w,r}(t)=\Psi(t)^{-1}H_{w,r}(t)$ becomes an eigenfunction of a matrix hypergeometric operator with eigenvalue $\lambda(w,r)$, explicitly given. In the last section we assume that $m\ge0$ and define the matrix
polynomial $P_w$ as the $(\ell+1)\times(\ell+1)$ matrix whose
$r$-row is the polynomial $F_{w,r}$. This leads to interesting families of matrix-valued orthogonal Jacobi polynomials $P_w^{\alpha,\beta}$ for $\alpha,\beta>-1$.
\\

{\em Key words:} Spherical functions

{\em MSC 2010 Codes:} aaa

\end{abstract}

\maketitle

\section{Spherical functions}

\noindent Let $G$ be a locally compact unimodular group and let $K$ be a compact
subgroup of $G$. Let $\hat K$ denote the set of all
equivalence classes of complex finite-dimensional irreducible
representations of $K$; for  $\delta\in \hat K$, let
$\xi_\delta$ denote the character of $\delta$, $d(\delta)$ the degree of
$\delta$, i.e., the dimension of any representation in
the class $\delta$, and $\chi_\delta=d(\delta)\xi_\delta$. We choose the Haar measure $dk$ on
$K$ normalized by $\int_K dk=1$.
We shall denote by $V$ a finite-dimensional vector space over the field
$\CC$ of complex numbers and by $\End(V)$ the space
of all linear transformations of $V$ into $V$.

A {\em spherical function} $\Phi$ on $G$ of type $\delta\in \hat K$ is a
continuous function on $G$ with values in $\End(V)$ such
that
\begin{enumerate} \item[(i)] $\Phi(e)=I$ ($I$= identity transformation).

\item[(ii)] $\Phi(x)\Phi(y)=\int_K \chi_{\delta}(k^{-1})\Phi(xky)\, dk$, for
all $x,y\in G$.
\end{enumerate}

 \noindent If $\Phi:G\longrightarrow
\End(V)$ is a spherical function of type $\delta$,
then $\Phi(kgk')=\Phi(k)\Phi(g)\Phi(k')$, for all $k,k'\in K$, $g\in G$,
and  $k\mapsto \Phi(k)$ is a representation of $K$ such that any
irreducible subrepresentation belongs to $\delta$. In particular the spherical
function $\Phi$ determines its type univocally and let us say that the number of times that $\delta$
occurs in the representation $k\mapsto \Phi(k)$ is called the {\em
height} of $\Phi$.

Spherical functions of type $\delta$ arise in a natural way upon
considering representations of $G$.
If $g\mapsto U(g)$ is a continuous representation of $G$, say on a finite
dimensional  vector
space $E$, then $$P(\delta)=\int_K \chi_\delta(k^{-1})U(k)\, dk$$ is a
projection of $E$ onto
$P(\delta)E=E(\delta)$; $E(\delta)$ consists of those vectors in $E$, the
linear span of whose $K$-orbit splits into irreducible $K$-subrepresentations
of type $\delta$.
The function
$\Phi:G\longrightarrow \End(E(\delta))$ defined by
$$\Phi(g)a=P(\delta)U(g)a,\quad g\in G,\; a\in E(\delta)$$
is a spherical function of type $\delta$. In fact, if $a\in E(\delta)$ we have
\begin{align*}
\Phi(x)\Phi(y)a&= P(\delta)U(x)P(\delta)U(y)a=\int_K \chi_\delta(k^{-1})
P(\delta)U(x)U(k)U(y)a\, dk\\
&=\left(\int_K\chi_\delta(k^{-1})\Phi(xky)\, dk\right) a. \end{align*}
If the representation $g\mapsto U(g)$ is irreducible, then the associated
spherical function $\Phi$ is also irreducible. Conversely, any irreducible
spherical function on a compact group $G$ arises in this way from a finite-dimensional irreducible representation of $G$.

If $G$ is a connected Lie group, it is
not difficult to prove that any spherical function $\Phi:G\longrightarrow
\End(V)$ is
differentiable ($C^\infty$), and moreover that it is analytic. Let $D(G)$
denote the algebra of all
left-invariant differential operators on $G$ and let $D(G)^K$ denote the
subalgebra of all operators in
$D(G)$ that are invariant under all right
translations by elements in $K$.

\smallskip
In the following proposition $(V,\pi)$ will be a finite-dimensional
representation of $K$ such that any irreducible
subrepresentation belongs to the same class $\delta\in\hat K$.
\begin{prop}\label{defeq}(\cite{T1},\cite{GV}) A function
$\Phi:G\longrightarrow \End(V)$ is a spherical function of type $\delta$ if
and only if
\begin{enumerate}
\item[(i)] $\Phi$ is analytic.
\item[(ii)] $\Phi(kgk')=\pi(k)\Phi(g)\pi(k')$, for all $k,k'\in K$,
$g\in G$, and
$\Phi(e)=I$.
\item[(iii)] $[D\Phi ](g)=\Phi(g)[D\Phi](e)$, for all $D\in D(G)^K$, $g\in G$.
\end{enumerate}
\end{prop}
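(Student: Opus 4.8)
\noindent\emph{Proof proposal.} The plan is to prove the two implications separately, treating the forward direction as a direct computation and locating the real difficulty in a uniqueness statement needed for the converse.

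For the ``only if'' direction I would take (i) and (ii) as essentially supplied by the discussion preceding the proposition: a spherical function restricts on $K$ to a representation all of whose irreducible constituents lie in $\delta$, so setting $\pi=\Phi|_K$ gives $\Phi(k_1gk_2)=\pi(k_1)\Phi(g)\pi(k_2)$ and analyticity is the general regularity fact already quoted. To obtain (iii) I would differentiate the defining identity $\Phi(x)\Phi(y)=\int_K\chi_\delta(k^{-1})\Phi(xky)\,dk$ in the variable $y$ by a left-invariant $D\in D(G)$; left-invariance moves $D$ inside the integral and onto $\Phi$, and evaluating at $y=e$ gives $\Phi(x)(D\Phi)(e)=\int_K\chi_\delta(k^{-1})(D\Phi)(xk)\,dk$. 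The hypothesis $D\in D(G)^K$ enters here: writing $D=X_1\cdots X_n$ and using $k\exp(tX)=\exp(t\operatorname{Ad}(k)X)k$ with right-equivariance yields $(D\Phi)(xk)=((\operatorname{Ad}(k)D)\Phi)(x)\,\pi(k)$, and invariance $\operatorname{Ad}(k)D=D$ collapses this to $(D\Phi)(xk)=(D\Phi)(x)\pi(k)$. Since $\pi$ is $\delta$-isotypic we have $\int_K\chi_\delta(k^{-1})\pi(k)\,dk=P(\delta)=I$, and substituting gives $(D\Phi)(x)=\Phi(x)(D\Phi)(e)$, which is (iii).

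For the ``if'' direction, fix $x$, set $\lambda(D)=(D\Phi)(e)$ (condition (iii) makes $\lambda$ an algebra homomorphism on $D(G)^K$), and put $\Psi(y)=\int_K\chi_\delta(k^{-1})\Phi(xky)\,dk$; the goal is $\Psi(y)=\Phi(x)\Phi(y)=:L(y)$. Both $L$ and $\Psi$ are analytic, right-$K$-equivariant, and, using (iii) and left-invariance as above, satisfy $D_yF=F\lambda(D)$ for every $D\in D(G)^K$; they also agree at $y=e$, both equalling $\Phi(x)$ because $P(\delta)=I$. Crucially, both satisfy the left reproducing identity $\int_K\chi_\delta(k^{-1})F(ky)\,dk=F(y)$: for $L$ this is again $P(\delta)=I$, and for $\Psi$ it follows from the character convolution identity $\int_K\chi_\delta(u^{-1}k)\chi_\delta(k^{-1})\,dk=\chi_\delta(u^{-1})$ (a consequence of Schur orthogonality). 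Thus $L$ and $\Psi$ share analyticity, right-$K$-equivariance, the reproducing identity, the eigenvalue homomorphism $\lambda$, and the value at $e$.

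The conclusion $L=\Psi$ therefore reduces to the uniqueness assertion that such data determine $F$, and I expect this to be the main obstacle. The point is that right-equivariance together with the $D(G)^K$-eigenequations is \emph{not} enough: already for the trivial type on $\SU(2)/\U(1)=S^2$ the Laplace eigenspaces are $(2\ell+1)$-dimensional, so the value at the base point fails to pin down the function. It is precisely the left reproducing property, i.e. the full bi-equivariance of type $\delta$, that cuts the solution space down to one determined by $F(e)$. Concretely I would prove uniqueness by showing $F\equiv0$ whenever $F(e)=0$: by analyticity and connectedness of $G$ it suffices that $(DF)(e)=0$ for all $D\in U(\mathfrak{g})$, and a Poincar\'e--Birkhoff--Witt decomposition adapted to $\mathfrak{g}=\mathfrak{k}\oplus\mathfrak{p}$ reduces this, via $(p\,k\,F)(e)=(pF)(e)\,d\pi(k)$, to the derivatives in the $\mathfrak{p}$-directions; the bi-type-$\delta$ structure together with the eigenequations (through the radial-part, regular-singular description of the operators in $D(G)^K$ on a section of the $K$-orbits) forces these to vanish. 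This uniqueness is the substantive input imported from \cite{T1} and \cite{GV}.
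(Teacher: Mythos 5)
The first thing to note is that the paper contains no proof of Proposition \ref{defeq}: it is quoted verbatim from \cite{T1} and \cite{GV}, so there is no internal argument to compare yours against, and your proposal has to be judged on its own merits. Your ``only if'' direction is correct and complete: differentiating the functional equation in $y$ with a left-invariant $D$, using $(D\Phi)(xk)=((\mathrm{Ad}(k)D)\Phi)(x)\pi(k)$ together with $\mathrm{Ad}(k)D=D$, and then $\int_K\chi_\delta(k^{-1})\pi(k)\,dk=P(\delta)=I$ on a $\delta$-isotypic $V$, is exactly the standard argument. The setup of the converse is also sound: $L(y)=\Phi(x)\Phi(y)$ and $\Psi(y)=\int_K\chi_\delta(k^{-1})\Phi(xky)\,dk$ do share analyticity, right-$K$-equivariance, the value $\Phi(x)$ at $y=e$, the eigenequations $DF=F\lambda(D)$ for $D\in D(G)^K$ (with $\lambda$ a homomorphism by (iii)), and --- as you correctly observe, since $\Psi$ is \emph{not} left-equivariant --- only the convolution identity $\chi_\delta\ast F=F$ on the left, which you verify via $\chi_\delta\ast\chi_\delta=\chi_\delta$. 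Your $S^2$ example showing that the right-equivariance plus the eigenequations alone cannot pin down $F$ is apt and correctly locates where the left condition must enter.

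The gap is in your closing sketch of the uniqueness lemma, and it is a real one. Having reduced, via PBW for $\mathfrak{g}=\mathfrak{k}\oplus\mathfrak{p}$ and right-equivariance, to the derivatives $(\mu(p)F)(e)$ with $p\in S(\mathfrak{p})$, you invoke ``the bi-type-$\delta$ structure'' of $F$; but the difference $F=\Psi-L$ satisfies only $\chi_\delta\ast F=F$, not $F(ky)=\pi(k)F(y)$, so the $K$-equivariance of the map $p\mapsto(\mu(p)F)(e)$ that this step needs is not available --- you would first have to show that the weaker convolution identity suffices, which you do not address. Moreover, even granting bi-equivariance, the assertion that the eigenequations ``force'' the $\mathfrak{p}$-derivatives to vanish is precisely the nontrivial content of the lemma: the averaging device that settles the trivial-type case --- replace $D$ by $D^\natural=\int_K\mathrm{Ad}(k)D\,dk\in U(\mathfrak{g})^K$ and use $((\mathrm{Ad}(k)D)F)(e)=(DF)(e)$ --- breaks down for nontrivial $\delta$, where one only obtains $(D^\natural F)(e)=\int_K\pi(k)^{-1}(DF)(e)\pi(k)\,dk$, i.e.\ control of the $\End_K(V)$-component of $(DF)(e)$ and nothing more. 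Finally, the appeal to a radial-part, regular-singular description of $D(G)^K$ belongs to the specific one-variable situation of the later sections of this paper and is not an argument in the generality of the proposition. Since you explicitly import the uniqueness from \cite{T1} and \cite{GV} --- the very citation the paper itself makes --- the proposal is acceptable as a correct reduction to the right key lemma, but the route you indicate toward that lemma would fail as written.
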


\smallskip
The aim of this paper is to determine all
irreducible spherical functions of  the
pair $(G,K)=(\SU(n+1), {\mathrm S}(\U(n)\times \U(1)))$, $n\ge2$, whose  $K$ types are of a special kind.
This will be done starting from Proposition \ref{defeq}.

The irreducible finite-dimensional representations of $G$
are restrictions of  irreducible representations of $\U(n+1)$, which
are parameterized by $(n+1)$-tuples of integers
$$\mm=(m_{1}, m_{2},\dots, m_{n+1})$$
such that $m_{1}\geq m_{2}\geq \cdots \geq m_{n+1}$.

Different representations of $\U(n+1)$ can be restricted to the same
representation of $G$. In fact the representations $\mm$
and $\pp$ of $\U(n+1)$ restrict to the same representation of
$\SU(n+1)$ if and only if $m_i=p_i+j$ for all $i=1,\dots,n+1$ and
some $j\in\ZZ$.

The closed subgroup $K$ of $G$ is
isomorphic to $\U(n)$, hence its finite-dimensional irreducible
representations are parameterized by the $n$-tuples of integers
$$\kk=(k_{1}, k_{2},\dots,k_{n})$$ subject to the conditions $k_{1}\geq k_{2}\geq \cdots \geq k_{n}$.
We shall say that $\kk$ is a one-step representation of $K$ if it is of the following form
\begin{equation*}
\kk=(\underbrace{m+\ell,\dots,m+\ell}_{k},\underbrace{m,\dots,m}_{n-k})
\end{equation*}
for $1\le k\le n-1$.

Let $\kk$ be an irreducible finite-dimensional representation of
$\U(n)$. Then $\kk$ is a subrepresentation of $\mm$ if and only if
the coefficients $k_{i}$ satisfy the interlacing property
$$m_{i}\geq k_{i}\geq m_{i+1},\quad  \text { for all
}\quad  i=1, \dots , n.$$ Moreover if $\kk$ is a subrepresentation
of $\mm$ it appears only once. (See \cite{VK}). Therefore the height of any
irreducible spherical function $\Phi$ of $(G,K)$ is one. This is equivalent to the commutativity of the algebra $D(G)^K$.
(See \cite{GV}, \cite{T1}).

The representation space $V_\kk$ of $\kk$ is a subspace of the
representation space $V_\mm$ of $\mm$ and it is also $K$-stable. In
fact, if $A\in\U(n)$, $a=(\det A)^{-1}$ and  $v\in V_\kk$ we have
$$\left(\begin{matrix} A&0\\0&a\end{matrix}\right)\cdot v=
a\left(\begin{matrix} a^{-1}A&0\\0&1\end{matrix}\right)\cdot
v=a^{s_\mm-s_\kk}\left(\begin{matrix}A&0\\0&1\end{matrix}\right)\cdot
v,$$ where $s_\mm= m_{1}+\cdots +m_{n+1}$ and $s_\kk=k_{1}+\cdots
+k_{n}$. This means that the representation of $K$ on $V_\kk$
obtained from $\mm$ by restriction is parameterized by
\begin{equation}\label{Ktipos}
(k_{1}+s_\kk-s_\mm, \dots ,k_{n} + s_\kk-
s_\mm).
\end{equation}

\medskip
Let $\Phi^{\mm, \kk}$ be  the spherical  function of $(G,K)$ associated to the
representation $\mm$ of $G$ and to the subrepresentation $\kk$ of
$\U(n)$. Then \eqref{Ktipos} says that the $K$-type of $\Phi^{\mm, \kk}$ is
$\kk+(s_\kk-s_\mm)(1,\dots,1)$.

\begin{prop}\label{equivalencia}
  The spherical functions $\Phi^{\mm,\kk}$ and
  $\Phi^{\mm',\kk'}$ of the pair $(G,K)$ are equivalent if and only
  if $\mm'=\mm+j(1,\dots, 1)$ and $\kk'=\kk+j(1,\dots, 1)$.
\end{prop}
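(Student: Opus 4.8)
The plan is to reduce the statement to the correspondence, recalled in Section~1, between irreducible spherical functions of $(G,K)$ and irreducible representations of the compact group $G$. Since every irreducible spherical function on a compact $G$ arises by projecting an irreducible representation onto a $K$-isotypic component, I would first record the general criterion that two such spherical functions are equivalent exactly when (a)~they have the same $K$-type $\delta$ and (b)~their underlying irreducible $G$-representations are equivalent. One direction is immediate: an intertwining isomorphism $T$ with $T\Phi^{\mm,\kk}(g)=\Phi^{\mm',\kk'}(g)T$ for all $g$ restricts to an intertwiner of the $K$-representations $k\mapsto\Phi^{\mm,\kk}(k)$ and $k\mapsto\Phi^{\mm',\kk'}(k)$, forcing (a), while the representation of $G$ can be recovered from the spherical function together with its type, giving (b). The converse is the construction of Section~1 read backwards. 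This step rests on \cite{T1} and \cite{GV} and is the only genuinely representation-theoretic input.

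With that in hand the argument becomes bookkeeping with weight shifts. For~(b): the tuples $\mm,\mm'$ label representations of $\U(n+1)$, and as recorded above they restrict to equivalent representations of $G=\SU(n+1)$ if and only if $\mm'=\mm+j(1,\dots,1)$ for some $j\in\ZZ$, in which case $s_{\mm'}=s_\mm+(n+1)j$. For~(a): by \eqref{Ktipos} the $K$-types of $\Phi^{\mm,\kk}$ and $\Phi^{\mm',\kk'}$ are $\kk+(s_\kk-s_\mm)(1,\dots,1)$ and $\kk'+(s_{\kk'}-s_{\mm'})(1,\dots,1)$, respectively.

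For the forward implication, equivalence gives~(a), so the two $K$-types agree; rearranging that equality shows $\kk'-\kk$ to be a constant multiple of $(1,\dots,1)$, i.e.\ $\kk'=\kk+c(1,\dots,1)$ for some $c\in\ZZ$, whence $s_{\kk'}=s_\kk+nc$. Equivalence also gives~(b), so $\mm'=\mm+j(1,\dots,1)$. Substituting both shifts into the equality of $K$-types and cancelling $\kk$ leaves the single identity $(n+1)c=(n+1)j$, that is $c=j$, so $\kk'=\kk+j(1,\dots,1)$. For the converse, assuming $\mm'=\mm+j(1,\dots,1)$ and $\kk'=\kk+j(1,\dots,1)$, the same substitution (now with $s_{\kk'}=s_\kk+nj$ and $s_{\mm'}=s_\mm+(n+1)j$) shows the two $K$-types coincide, while $\mm$ and $\mm'$ are equivalent as $G$-representations; moreover the interlacing inequalities $m_i\ge k_i\ge m_{i+1}$ that make $\kk$ a subrepresentation of $\mm$ are preserved under the common shift, so $\Phi^{\mm',\kk'}$ is indeed defined. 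The criterion of the first paragraph then gives the equivalence.

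I expect the main obstacle to be pinning down the equivalence criterion of the first paragraph cleanly, namely that an irreducible spherical function on the compact group $G$, together with its type, determines the underlying irreducible $G$-representation up to equivalence. Once this is in place, everything else is the elementary computation in which the identity $(n+1)c=(n+1)j$ does all the work, and the height-one property noted earlier ensures there are no multiplicity complications.
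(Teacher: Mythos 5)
Your proposal is correct and follows essentially the same route as the paper: both reduce to the criterion (cited to \cite{T1}, p.~85) that equivalence of irreducible spherical functions amounts to equivalence of the underlying $G$-representations together with equality of the $K$-types from \eqref{Ktipos}, and then carry out the same weight-shift bookkeeping forcing $\kk'=\kk+c(1,\dots,1)$ with $(n+1)c=(n+1)j$. Your version merely makes explicit what the paper leaves as ``it is easy to see that $p=j$'' and sketches the criterion rather than only citing it, which is a harmless elaboration, not a different argument.
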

\begin{proof}
The spherical functions $\Phi^{\mm,\kk}$ and
  $\Phi^{\mm',\kk'}$ are equivalent if and only if
  $\mm$ and $\mm'$ are equivalent and the $K$-types of both spherical functions are the
  same, see the discussion in p. 85 of \cite{T1}.  We know that $\mm\simeq \mm'$ if and only if
$$\mm'=\mm+j(1,\dots, 1)\quad \text{ for some } j\in \ZZ.$$
Besides, the $K$ types are the same if and only if
$$k_{i}+s_\kk-s_\mm= k_{i}'+s_{\kk'}-s_{\mm'}\qquad \text{ for all } i=1,\dots,n.$$
Therefore $\kk'=\kk+p(1,\dots,1)$,  and now it is easy to see that
$p=j$. \qed
\end{proof}

\begin{remark}\label{Ktipo}
Given a spherical function $\Phi^{\mm,\kk}$ we can assume that
$s_\kk-s_\mm=0$. In such a case the $K$-type of $\Phi^{\mm,\kk}$ is $\kk$, see \eqref{Ktipos}.
\end{remark}

Our Lie group  $G$ has the following polar decomposition $G=KAK$, where the abelian subgroup $A$ of $G$ consists of all matrices of the form
\begin{equation}\label{atheta}
a(\theta)= \left(\begin{matrix} \cos \theta& 0& \sin \theta\\ 0&I_{n-1}&0\\ -\sin \theta & 0&\cos
\theta\end{matrix}\right),\qquad \theta\in \RR.
\end{equation}
(Here $I_{n-1}$ denotes the identity matrix of size $n-1$). Since an irreducible spherical function $\Phi$  of $G$ of type $\delta$ satisfies $\Phi(kgk')= \Phi(k)\Phi(g)\Phi(k')$ for all $k,k'\in K$ and $g\in G$, and  $\Phi(k)$ is an irreducible representation of $K$ in the class $\delta$, it follows that $\Phi$ is determined by its restriction to $A$ and its $K$-type. Hence, from now on, we shall consider its restriction to $A$.

Let $M$ be the centralizer of  $A$ in $K$. Then $M$ consists of all
elements of the form
\begin{equation*}\label{M}
m= \left(\begin{matrix} e^{ir}& 0&0\\ 0&B&0\\ 0&
0&e^{ir}\end{matrix}\right), \qquad  r\in \RR, B\in \U(n-1), \det
B=e^{-2ir}.
\end{equation*}

The finite-dimensional irreducible representations of $\U(n-1)$ are parameterized by the $(n-1)$-tuples of integers
$$\mathbf t=(t_{1}, t_{2},\dots, t_{n-1})$$
such that $t_{1}\geq t_{2}\geq \cdots \geq t_{n-1}$.

The representation of $\U(n)$ in $V_\kk\subset V_\mm$,  $\kk=(k_{1}, \dots ,k_{n})$ restricted to $\U(n-1)$ decomposes as the following direct sum
\begin{equation}\label{U(n-1)subrepresentations}
V_\kk=\bigoplus_{\mathbf t\in \hat \U(n-1)} V_{\mathbf t},
\end{equation}
where the sum is over all the representations $\mathbf t=(t_{1}, \dots , t_{n-1})\in \hat \U(n-1)$ such that the coefficients of $\mathbf t$ interlace the coefficients of $\kk$: $k_{i}\geq
t_{i}\geq k_{i+1}$, for all $i=1, \dots , n-1$.

If $a\in A$, then  $\Phi^{\mm,\kk}(a)$ commutes with $\Phi^{\mm,\kk}(m)$ for all $m\in M$. In fact we have
$$\Phi^{\mm,\kk}(a)\Phi^{\mm,\kk}(m)=\Phi^{\mm,\kk}(am)=\Phi^{\mm,\kk}(ma)=\Phi^{\mm,\kk}(m)\Phi^{\mm,\kk}(a).$$

But $m=e^{ir}\left(\begin{matrix} 1& 0&0\\ 0&e^{-ir}B&0\\ 0&
0&1\end{matrix}\right)$ and $e^{ir}I$ is in the center of $\U(n+1)$.
Hence  $V_\mathbf t$ is an irreducible $M$-module and
$\Phi^{\mm,\kk}(a)$ also commutes with the action $\U(n-1)$. Since
 $V_\mathbf t\subset V_\kk$ appears only once, by Schur's Lemma
it follows that $\Phi^{\mm,\kk}(a)|_{V_\mathbf
t}=\phi^{\mm,\kk}_\mathbf t(a)I|_{V_\mathbf t}$, where
$\phi^{\mm,\kk}_ \mathbf t(a)\in\CC$ for all $a\in A$.

\smallskip
For $g\in G$, let $A(g)$ denote the $n\times n$ left upper
corner of  $g$, and let
$$\A=\{g\in G: A(g) \text{ is nonsingular}  \}.$$
Notice that $\mathcal A$ is an  open dense subset of $G$ which is
left and right invariant under $K$. The set $\mathcal A$ can also be described as the set of all $g\in G$ such that $g_{n+1,n+1}\neq 0$. This is a consequence of the following lemma.

\begin{lem}\label{menores} If $U=(u_{ij})\in \SU(n+1)$, we shall denote by $U_{(i\vert
j)}$ the $n\times n$ matrix obtained from $U$ by eliminating the
$i$th row and the $j$th column.  Then
$$\det U_{(i|j)}=(-1)^{i+j}\overline u_{ij}.$$
\end{lem}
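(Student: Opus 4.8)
The plan is to read off the identity from the two standard descriptions of the inverse of $U$, exploiting that $U$ is simultaneously unitary and of determinant one. These two features give two formulas for the entries of $U^{-1}$, and comparing them yields exactly the claimed relation.

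First I would recall the adjugate (cofactor) formula. For any square matrix $U$ one has $U\,\adj(U)=(\det U)\,I$, where the adjugate is the transpose of the cofactor matrix, so that its $(i,j)$ entry is
\begin{equation*}
(\adj U)_{ij}=(-1)^{i+j}\det U_{(j|i)}.
\end{equation*}
Since $U\in\SU(n+1)$ we have $\det U=1$, hence $U^{-1}=\adj(U)$ and therefore
\begin{equation*}
(U^{-1})_{ij}=(-1)^{i+j}\det U_{(j|i)}.
\end{equation*}

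Next I would use unitarity. Because $U\in\SU(n+1)$ is unitary, $U^{-1}=U^{*}=\overline{U}^{\,T}$, so the $(i,j)$ entry of $U^{-1}$ is $\overline{u_{ji}}$. Equating the two expressions for $(U^{-1})_{ij}$ gives
\begin{equation*}
\overline{u_{ji}}=(-1)^{i+j}\det U_{(j|i)},
\end{equation*}
and interchanging the roles of $i$ and $j$ produces $\det U_{(i|j)}=(-1)^{i+j}\overline{u_{ij}}$, which is the assertion.

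There is no genuine obstacle here: the argument is purely the interplay of the classical adjoint formula with the two defining properties $\det U=1$ and $U^{*}=U^{-1}$. The only point requiring a little care is bookkeeping of the indices and the sign in the adjugate—namely that the adjugate is the \emph{transpose} of the cofactor matrix, which is what produces the minor $U_{(j|i)}$ rather than $U_{(i|j)}$ before the final relabeling. Once the index conventions are fixed, the identity follows immediately.
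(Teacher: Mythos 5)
Your proof is correct and follows essentially the same route as the paper: both derive the identity by combining the adjugate formula $U\,\adj U=(\det U)I$ with the two properties $\det U=1$ and $U^{-1}=U^{*}$, then equate $(-1)^{i+j}\det U_{(j|i)}=\overline{u_{ji}}$ and relabel indices. No gaps.
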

\begin{proof}
The adjoint of a square matrix $U$ is the matrix whose $ij$-element
is defined by $(\adj\,U)_{ij}=(-1)^{i+j}\det U_{(j\vert i)}$, and it
is denoted by $\adj\,U$. Then $U\,\adj\,U=\det U$. But if
$U\in\SU(n+1)$ we have $U^{-1}=U^*$ and $\det U=1$, hence
$$(-1)^{i+j}\det U_{(j\vert i)}=\overline u_{ji},$$
which completes the proof of the lemma. \qed
\end{proof}

\medskip
As in \cite{GPT1} to determine
all irreducible spherical functions of $G$ of type $\kk$ an
auxiliary  function $\Phi_\kk:\A\longrightarrow \End(V_\kk)$ is
introduced. It is defined by $$\Phi_\kk(g)=\pi(A(g)),$$ where  $\pi$
stands for the unique holomorphic representation of $\GL(n,\CC)$
corresponding to the parameter $\kk$.  It turns out that if $k_n\geq
0$, then $\Phi_\kk= \Phi^{\mm,\kk}$ where $\mm=(k_1,\dots,k_n,0)$.

Then instead of looking at a general spherical function
$\Phi$ of type $\kk$, we shall look at the
function $H(g)=\Phi(g)\Phi_\kk (g)^{-1}$ which is
well defined on $\A$.

\section{The differential operators $D$ and $E$}

\noindent The group $G$ acts in a natural way on the complex
projective space $P_n(\CC)$. This action is transitive and
$K$ is the isotropy subgroup of the point
$(0,\dots,0,1)\in P_n(\CC)$. Therefore
$$ P_n(\CC)\simeq G/K.$$
Moreover the $G$-action on $P_n(\CC)$
corresponds to the action induced by left multiplication on $G/K$.
We identify the complex space $\CC^n$ with the affine space
$\CC^n=\vzm{(z_1,\dots,z_n,1)\in P_n(\CC)}{(z_1,\dots,z_n)\in \CC^n},$
and we will take full advantage of the $K$-orbit structure of $P_n(\CC)$.
The affine space $\CC^n$ is $K$-stable and the corresponding space
at infinity $L=P_{n-1}(\CC)$ is a $K$-orbit. Moreover the $K$-orbits in $\CC^n$
are the spheres
$$S_r=\vzm{(z_1,\dots,z_n)\in \CC^n}{|z_1|^2+\cdots+|z_n|^2=r^2}.$$ Thus we can take the
points $(r,0,\dots,0)\in S_r$ and $(1,0,\dots,0)\in L$ as
representatives of $S_r$ and $L$, respectively. Since
$(M,0,\dots,0,1)=(1,0,\dots,\textstyle\frac1M)\longrightarrow
(1,0,\dots,0)$ when $M\rightarrow \infty$, the closed interval
$[0,\infty]$ parameterizes the set of $K$-orbits in $P_n(\CC)$.

Let us consider on $\CC^n$ the $2n$-real linear coordinates
$(x_1,y_1,\dots,x_n,y_n)$ defined by:
$x_j(z_1,\dots,z_n)+i\,y_j(z_1,\dots,z_n)=z_j$ for all
$(z_1,\dots,z_n)\in \CC^n$. We also introduce the following usual
notation:
\begin{equation*} \frac{\partial }{\partial z_j}=\frac 12 \left(
\frac{\partial }{\partial x_j}- i\,\frac{\partial }{\partial
y_j}\right).
\end{equation*}

From now on any $X\in\liesl(n+1,\CC)$ will be considered as a left-invariant complex vector field on $G$.
We will be interested in the following
left-invariant differential operators on $G$,
\begin{equation}\label{DeltaPQ}
\Delta_P=\sum_{1\le j\le
n}E_{n+1,j}E_{j,n+1},\quad\Delta_Q=\sum_{1\le i,j\le
n}E_{n+1,i}E_{j,n+1}E_{ij}.
\end{equation}

\begin{lem}\label{Kinvariantes}
The differential operators $\Delta_P$ and $\Delta_Q$ are in $D(G)^K$.
\end{lem}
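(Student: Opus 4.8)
The plan is to translate the statement into the language of the universal enveloping algebra. Identifying $D(G)$ with $\mathcal U(\lieg)$, $\lieg=\liesl(n+1,\CC)$, a left-invariant operator is invariant under right translation by $k\in K$ exactly when the corresponding element of $\mathcal U(\lieg)$ is fixed by $\operatorname{Ad}(k)$; hence $D(G)^K\cong\mathcal U(\lieg)^{\operatorname{Ad}(K)}$ and it suffices to prove that $\operatorname{Ad}(k)\Delta_P=\Delta_P$ and $\operatorname{Ad}(k)\Delta_Q=\Delta_Q$ for all $k\in K$. First I would check that $\Delta_P,\Delta_Q$ really lie in $\mathcal U(\lieg)$: the entries $E_{n+1,j},E_{j,n+1}$ with $j\le n$ are off-diagonal, so $\Delta_P\in\mathcal U(\lieg)$ directly; in $\Delta_Q$ the only diagonal matrices occur in $\sum_i E_{n+1,i}E_{i,n+1}E_{ii}$, and splitting $E_{ii}=(E_{ii}-\tfrac1{n+1}I)+\tfrac1{n+1}I$ with $E_{ii}-\tfrac1{n+1}I\in\lieg$ and $I$ the central, $\operatorname{Ad}(G)$-invariant identity exhibits $\Delta_Q$ as a well-defined operator on $\SU(n+1)$ whose invariance is untouched by the $I$-term.

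Next I would compute the action of $\operatorname{Ad}(k)=\operatorname{Ad}\!\left(\begin{smallmatrix}A&0\\0&a\end{smallmatrix}\right)$, with $A\in\U(n)$ and $a=(\det A)^{-1}$, on the three families of matrix units entering the operators, by the straightforward conjugation $\operatorname{Ad}(k)E=kEk^{-1}$:
\begin{align*}
\operatorname{Ad}(k)E_{n+1,j}&=a\sum_{q=1}^{n}(A^{-1})_{jq}\,E_{n+1,q},\\
\operatorname{Ad}(k)E_{j,n+1}&=a^{-1}\sum_{p=1}^{n}A_{pj}\,E_{p,n+1},\\
\operatorname{Ad}(k)E_{ij}&=\sum_{p,q=1}^{n}A_{pi}(A^{-1})_{jq}\,E_{pq}\qquad(1\le i,j\le n).
\end{align*}
The key structural features are that each \emph{last-row} factor contributes a scalar $a$, each \emph{last-column} factor a scalar $a^{-1}$, and the block factor $E_{ij}$ no power of $a$ at all.

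Finally, since $\operatorname{Ad}(k)$ is an algebra automorphism of $\mathcal U(\lieg)$, I would apply it factorwise to each summand. In $\Delta_P=\sum_j E_{n+1,j}E_{j,n+1}$ the two scalars cancel, $a\cdot a^{-1}=1$, and the matrix entries contract through $\sum_j A_{pj}(A^{-1})_{jq}=\delta_{pq}$, returning $\sum_p E_{n+1,p}E_{p,n+1}=\Delta_P$. In $\Delta_Q=\sum_{i,j}E_{n+1,i}E_{j,n+1}E_{ij}$ the scalars again cancel and the two independent contractions $\sum_i A_{ri}(A^{-1})_{ip}=\delta_{rp}$ and $\sum_j A_{qj}(A^{-1})_{js}=\delta_{qs}$ collapse the quadruple sum back to $\sum_{p,q}E_{n+1,p}E_{q,n+1}E_{pq}=\Delta_Q$. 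This establishes the asserted $\operatorname{Ad}(K)$-invariance. I expect no serious obstacle here: the balancing of the $a$-scalars and the contraction of $A$ against $A^{-1}$ are forced by the fact that $\Delta_P$ and $\Delta_Q$ each contain exactly one last-row and one last-column factor; the only point requiring care is the initial reduction and the $\liegl$-versus-$\liesl$ bookkeeping that guarantees $\Delta_Q\in D(G)$, the invariance computation itself being purely mechanical index manipulation.
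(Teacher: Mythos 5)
Your proof is correct, but it takes a genuinely different route from the paper's. The paper argues infinitesimally: it observes that $\Delta_P$ and $\Delta_Q$ have weight zero with respect to the Cartan subalgebra of $\liek_\CC$ (the diagonal matrices), so that $K$-invariance reduces to checking $[E_{r,r+1},\Delta_P]=0$ and $[E_{r,r+1},\Delta_Q]=0$ for the simple root vectors $E_{r,r+1}$, $1\le r\le n-1$, which it verifies by two short bracket computations; this is the standard highest-weight shortcut (a weight-zero vector in a finite-dimensional ad-module killed by all raising operators spans a trivial submodule), and it suffices because $K$ is connected. You instead compute the global action $\operatorname{Ad}(k)$, $k=\left(\begin{smallmatrix}A&0\\0&a\end{smallmatrix}\right)$, on each family of matrix units and contract $A$ against $A^{-1}$; your conjugation formulas and the cancellation bookkeeping (one factor of $a$ against one of $a^{-1}$ in each summand, $\sum_j A_{pj}(A^{-1})_{jq}=\delta_{pq}$) are all correct, and the quadruple sum for $\Delta_Q$ does collapse as you claim. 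What each approach buys: yours establishes invariance under every $k\in K$ directly, with no appeal to connectedness or to the representation theory of $\liek$, at the cost of somewhat heavier index manipulation; the paper's check is shorter (two small commutators) but relies on the weight-zero observation and on $K$ being connected. Your preliminary remark on the $\liegl$-versus-$\liesl$ issue --- splitting $E_{ii}=(E_{ii}-\tfrac1{n+1}I)+\tfrac1{n+1}I$ with the central part $\operatorname{Ad}$-invariant --- is a point the paper passes over in silence, so if anything your write-up is more careful there; just note that the central summand contributes a multiple of $\Delta_P$ to $\Delta_Q$, which is itself invariant by the first computation, so the reduction is harmless exactly as you say.
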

\begin{proof}
It is clear from the definitions that $\Delta_P$ and $\Delta_Q$ are elements of weight zero with respect to the Cartan subalgebra of $\liek_\CC$ of all diagonal matrices. Thus to prove that $\Delta_P$ and $\Delta_Q$ are right invariant under $K$ it suffices to prove, respectively, that $[E_{r,r+1},\Delta_P]=0$ and that $[E_{r,r+1},\Delta_Q]=0$ for $1\le r\le n-1$. We have
\begin{align*}
[E_{r,r+1},\Delta_P]=&\sum_{j=1}^n[E_{r,r+1},E_{n+1,j}]E_{j,n+1}+\sum_{j=1}^nE_{n+1,j}[E_{r,r+1},E_{j,n+1}]\displaybreak[0]\\
=&-E_{n+1,r+1}E_{r,n+1}+E_{n+1,r+1}E_{r,n+1}=0.
\end{align*}

Similarly,
\begin{align*}
[E_{r,r+1},\Delta_Q]=&\sum_{i,j=1}^n[E_{r,r+1},E_{n+1,i}]E_{j,n+1}E_{ij}+\sum_{i,j=1}^nE_{n+1,i}[E_{r,r+1},E_{j,n+1}]E_{ij}\\
&+\sum_{i,j=1}^nE_{n+1,i}E_{j,n+1}[E_{r,r+1},E_{ij}] \displaybreak[0]\\
=&-\sum_{j=1}^nE_{n+1,r+1}E_{j,n+1}E_{rj}+\sum_{i=1}^nE_{n+1,i}E_{r,n+1}E_{i,r+1}\\
&+\sum_{j=1}^nE_{n+1,r+1}E_{j,n+1}E_{rj}-\sum_{i=1}^nE_{n+1,i}E_{r,n+1}E_{i,r+1}=0.
\end{align*}
\qed
\end{proof}

\subsection{Reduction to $P_n(\CC)$}\label{reduccionproyectivo} \mbox{} \\
According to Proposition \ref{defeq} if $\Phi=\Phi^{\mm,\kk}$ denotes a generic irreducible spherical function of type $\kk$, then $\Delta_P\Phi=\lambda\Phi$ and $\Delta_Q\Phi=\mu\Phi$ with $\lambda,\mu\in\CC$, because $[\Delta_P\Phi](e)=\lambda I$ and  $[\Delta_Q\Phi](e)=\mu I$ since $V_\mm$ as a $K$-module is multiplicity-free.

We introduced the auxiliary  function $\Phi_\kk:\A\longrightarrow \End(V_\kk)$. It is defined by $\Phi_\kk(g)=\pi(A(g))$ where  $\pi$ stands for the unique holomorphic representation of $\GL(n,\CC)$ of highest weight $\kk$.
Then instead of looking at a general spherical function
$\Phi$ of type $\kk$ we look at the
function
$$H(g)=\Phi(g)\Phi_\kk (g)^{-1}$$
which is well defined on $\A$.
Then $H$ satisfies
\begin{enumerate}
\item [(i)] $H(e)=I$.
\item [(ii)]$ H(gk)=H(g)$, for all $g\in {\mathcal A}, k\in K$.
\item [(iii)]
$H(kg)=\pi(k)H(g)\pi(k^{-1})$, for all
$g\in {\mathcal A}, k\in K$.
\end{enumerate}

\smallskip
The projection map $p:G\longrightarrow P_n(\CC)$ defined by $p(g)=g\cdot(0,\dots,0,1)$, maps the
open set ${\mathcal A}$ onto the affine space $\CC^n$. Thus (ii) says that $H$
may be considered as a function on $\CC^n$.

The fact that $\Phi$ is an eigenfunction of $\Delta_P$ and
$\Delta_Q$ makes $H$ into an eigenfunction of certain differential
operators $D$ and $E$ on $\CC^n$, to be determined now.

Since the function $A$  is the restriction of a holomorphic function defined on an open subset of $\GL(n+1,\CC)$, and $A(g\exp tE_{j,n+1})=A(g)$ for all $t\in\RR$ and $1\le j\le n$, it follows that $E_{j,n+1}(\Phi_\kk)=0$. Thus
\begin{equation*}\label{D2}
\Delta_P(H\Phi_\kk)=\sum_{j=1}^n\big((E_{n+1,j}E_{j,n+1}H)\Phi_\kk+(E_{j,n+1}H)(E_{n+1,j}\Phi_\kk)\big).
\end{equation*}

Let $\dot\pi$ be the irreducible representation of $\liegl(n,\CC)$ obtained by derivation of $\pi$ of highest weight $\kk$.
Since $\Phi_\kk(gk)=\Phi_\kk(g)\pi(k)$ for all $g\in\mathcal A, k\in K$ we have $E_{i,j}\Phi_\kk=\Phi_\kk\dot\pi(E_{i,j})$.  Therefore
\begin{equation*}\label{E2}
\Delta_Q(H\Phi_\kk)=\sum_{i,j=1}^n\big((E_{n+1,i}E_{j,n+1}H)\Phi_\kk
+(E_{j,n+1}H)(E_{n+1,i}\Phi_\kk)\big)\dot\pi(E_{ij}).
\end{equation*}

In the open set ${\mathcal A} \subset G$ let us consider the following
differential operators. For
 $H\in C^{\infty}({\mathcal A})\otimes \End(V_\kk) $ let

\begin{equation*}\label{DeltaP1P2}
\begin{split}
D_1 H &=\sum_{j=1}^n E_{n+1,j}E_{j,n+1}H,\;\\
D_2 H &=\sum_{j=1}^n(E_{j,n+1}H)(E_{n+1,j}\Phi_\kk)\Phi_\kk^{-1},
\end{split}
\end{equation*}
\begin{equation*}\label{DeltaQ1Q2}
\begin{split}
E_1 H& =\sum_{i,j=1}^n(E_{n+1,i}E_{j,n+1}H)\Phi_\kk\dot\pi(E_{ij})\Phi_\kk^{-1},
\\
E_2 H &=\sum_{i,j=1}^n(E_{j,n+1}H)(E_{n+1,i}\Phi_\kk)\dot\pi(E_{ij})\Phi_\kk^{-1}.
\end{split}
\end{equation*}

We observe that
\begin{equation}\label{DeltaHPhi}
  \Delta_P(H\Phi_\kk) = \Big ( D_1(H)+D_2(H)\Big) \Phi_\kk \text{ and } \Delta_Q(H\Phi_\kk) = \Big ( E_1(H)+E_2(H)\Big) \Phi_\kk;
\end{equation}
then it is clear that $\Phi=H\Phi_\kk$ is an eigenfunction of $\Delta_P$ and $\Delta_ Q$ if and only if $H$ is an eigenfunction of  $D=D_1+D_2$ and $E=E_1+E_2$.

\begin{lem} The differential operators $D_1$, $D_2$, $E_1$, and $E_2$ define differential operators
$D_1, D_2, E_1$ and $E_2$ acting on $C^\infty(\CC^n)\otimes\End(V_\kk)$.
\end{lem}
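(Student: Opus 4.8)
The plan is to show that each of the four operators $D_1$, $D_2$, $E_1$, $E_2$ descends from $\mathcal A$ to the affine space $\CC^n$ via the projection $p(g)=g\cdot(0,\dots,0,1)$. By property (ii) of $H$, namely $H(gk)=H(g)$ for all $k\in K$, the function $H$ is constant on the right $K$-cosets and hence factors through $p$; the content of the lemma is that each operator preserves this $K$-right-invariance, so that the result is again a function on $\CC^n$. The general strategy is therefore to verify that for each operator $T\in\{D_1,D_2,E_1,E_2\}$ and each $k\in K$, if $H$ satisfies $H(gk)=H(g)$ then $(TH)(gk)=(TH)(g)$ as well.

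First I would make precise the action of the left-invariant vector fields $E_{j,n+1}$ and $E_{n+1,j}$ on right-$K$-invariant functions. The key point is that $E_{j,n+1}$ and $E_{n+1,j}$, for $1\le j\le n$, span (together with their brackets) the complexified tangent space to the fiber direction and to $\CC^n$; since these are the ``horizontal'' directions transverse to $K$, the operators $D_1$ and $E_1$, being built purely from the $E_{n+1,j}E_{j,n+1}H$ blocks, act on $H$ through derivatives that are themselves right-$K$-invariant. Concretely I would check that the second-order pieces $E_{n+1,i}E_{j,n+1}H$, regarded as functions on $\mathcal A$, again satisfy the invariance $(\cdot)(gk)=\mathrm{Ad}(k^{-1})$-twisted versions, and that the twisting is exactly compensated by the conjugation factors $\Phi_\kk\dot\pi(E_{ij})\Phi_\kk^{-1}$ appearing in $E_1$ and $E_2$, and by the factor $(E_{n+1,j}\Phi_\kk)\Phi_\kk^{-1}$ in $D_2$.

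For the operators $D_2$, $E_1$, $E_2$ that explicitly involve $\Phi_\kk$, I would use the two transformation rules already established in the reduction: $E_{i,j}\Phi_\kk=\Phi_\kk\dot\pi(E_{i,j})$, coming from $\Phi_\kk(gk)=\Phi_\kk(g)\pi(k)$, together with $E_{j,n+1}\Phi_\kk=0$. These identities ensure that the auxiliary factors transform in precisely the way needed for the combinations $(E_{n+1,j}\Phi_\kk)\Phi_\kk^{-1}$ and $\Phi_\kk\dot\pi(E_{ij})\Phi_\kk^{-1}$ to be genuine $\End(V_\kk)$-valued functions on $\CC^n$ rather than merely on $\mathcal A$. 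The combined invariance statement then follows by matching the $\mathrm{Ad}(k)$-action on the vector-field indices against the $\pi(k)$-conjugation, so that each operator $T$ sends right-$K$-invariant $H$ to right-$K$-invariant $TH$.

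I expect the main obstacle to be the bookkeeping for the mixed terms $D_2$ and $E_2$, where a first-order derivative $E_{j,n+1}H$ is multiplied by the $\Phi_\kk$-dependent factor: one must confirm that the failure of $E_{j,n+1}H$ to be strictly invariant (it transforms under $\mathrm{Ad}(k)$ in the index $j$) is exactly cancelled by the transformation of $(E_{n+1,j}\Phi_\kk)\Phi_\kk^{-1}$ and of the $\dot\pi(E_{ij})$ factor. Making this cancellation explicit requires computing how $\mathrm{Ad}(k)$ acts on the span of $\{E_{j,n+1}\}$ and comparing it with the conjugation $\dot\pi(\mathrm{Ad}(k))$ on $\End(V_\kk)$; once these two representations are identified as the standard representation of $\U(n)$ and its image under $\dot\pi$ respectively, the invariance drops out. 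After the invariance is verified, the descent to $\CC^n$ is automatic since $p$ restricted to $\mathcal A$ is a submersion onto the affine space and $C^\infty(\CC^n)$ is identified with the right-$K$-invariant functions in $C^\infty(\mathcal A)$.
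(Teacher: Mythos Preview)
Your overall plan is correct: the lemma amounts to showing that each of the four operators sends right-$K$-invariant functions to right-$K$-invariant functions, and your outline of tracking the $\mathrm{Ad}(k)$-action on the indices against the $\pi(k)$-conjugation on the $\Phi_\kk$-factors would indeed accomplish this. The paper, however, organizes the argument more economically and avoids most of the explicit bookkeeping you anticipate as the ``main obstacle.''

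The key difference is that the paper exploits the identities $\Delta_P(H\Phi_\kk)\Phi_\kk^{-1}=D_1H+D_2H$ and $\Delta_Q(H\Phi_\kk)\Phi_\kk^{-1}=E_1H+E_2H$, together with the already-proved fact that $\Delta_P,\Delta_Q\in D(G)^K$. Since the left-hand sides are easily seen to be right-$K$-invariant (using only $\Phi_\kk(gk)=\Phi_\kk(g)\pi(k)$), it suffices to check invariance for \emph{one} operator in each pair; the other comes for free by subtraction. For $D_1$ this is immediate, since $D_1H=\Delta_PH$ as a differential operator on $H$ alone. For $E_1$, the paper observes that the tensor $\sum_{i,j}E_{n+1,i}E_{j,n+1}\otimes E_{ij}\in D(G)\otimes D(K)$ is $K$-invariant (same computation as in Lemma~\ref{Kinvariantes}), and then packages $E_1$ via a linear map $L:D(G)\otimes D(K)\to\End(C^\infty(\mathcal A)\otimes\End(V_\kk))$ whose behavior under the diagonal $K$-action makes the invariance of $E_1H$ transparent.

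Your direct approach is valid and conceptually the same, but it treats all four operators on equal footing and therefore requires carrying out the index-cancellation for $D_2$ and $E_2$ explicitly; the paper's route halves the work by reducing $D_2$ and $E_2$ to $D_1$ and $E_1$ via the global invariance of $\Delta_P$ and $\Delta_Q$.
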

\begin{proof}
The only thing we really need to prove is that $D_1$, $D_2$, $E_1$, and $E_2$ preserve the subspace  $C^{\infty}({\mathcal A})^K\otimes\End(V_\kk)$ of all right $K$-invariant functions.

For $H\in C^{\infty}({\mathcal A})\otimes\End(V_\kk)$ and $k\in K$  we write $H^{R(k)}(g)=H(gk)$ for all $g\in \mathcal A$. Then $(Ad(k)D)H=(DH^{R(k^{-1})})^{R(k)}$  for any $D\in D(G)$.   In particular any $D\in D(G)^K$ leaves  $C^{\infty}({\mathcal A})^K\otimes\End(V_\kk)$ invariant.

That $D_1$ has this property is a consequence of $D_1H=\Delta_PH$ and $\Delta_P\in D(G)^K$.
On the other hand, from \eqref{DeltaHPhi} we get
$$\Delta_P(H\Phi_\kk)\Phi_\kk^{-1}=D_1H+D_2 H.$$
If $H\in C^\infty(\A)^K\otimes\End(V_\kk)$, it is easy to verify that  $\Delta_P(H\Phi_\pi)\Phi_\kk^{-1}$ belongs to $C^{\infty}({\mathcal A})^K\otimes\End(V_\kk)$. Therefore $D_2$ also preserves $C^{\infty}({\mathcal A})^K\otimes\End(V_\kk)$.

In a similar way from \eqref{DeltaHPhi} we obtain
$$\Delta_Q(H\Phi_\kk))\Phi_\kk^{-1}=E_1H+E_2H.$$
Since $\Delta_Q\in D(G)^K$ it follows that $\Delta_Q(H\Phi_\pi)\Phi_\kk^{-1}\in C^{\infty}({\mathcal A})^K\otimes\End(V_\kk)$ for all $H\in C^\infty(\A)^K\otimes\End(V_\kk)$. Hence to finish the proof of the lemma it suffices to prove that $E_1$ leaves invariant  $C^{\infty}({\mathcal A})^K\otimes\End(V_\kk)$.

A computation similar to the one done in the proof of Lemma \ref{Kinvariantes} shows that the element
$$\sum_{i,j=1}^n E_{n+1,i}E_{j,n+1}\otimes E_{ij}\,\in D(G)\otimes D(K)$$
is $K$-invariant. Now let us consider the unique linear map
$$L:D(G)\otimes D(K)\longrightarrow \End\big(C^\infty(\A)^K\otimes\End(V_\kk)\big)$$
such that $L(E\otimes F)(H)=(EH)\Phi_\kk\dot\pi(F)\Phi_\kk^{-1}$. Then
$$\Delta_{Q_1}H=L\Big(\sum_{i,j=1}^n E_{n+1,i}E_{j,n+1}\otimes E_{ij}\Big)(H).$$
Thus if $k\in K$ and $H\in C^\infty(\A)^K\otimes \End(V_\kk)$, then
\begin{align*}
\Delta_{Q_1}H&=L\Big(\sum_{i,j=1}^n Ad(k)(E_{n+1,i}E_{j,n+1})\otimes Ad(k) E_{ij}\Big)(H)\\
&=\sum_{i,j=1}^n(Ad(k)( E_{n+1,i}E_{j,n+1})H)\Phi_\kk\pi(k)\dot\pi(E_{ij})\pi(k^{-1})\Phi_\kk^{-1}\\
&=\sum_{i,j=1}^n( E_{n+1,i}E_{j,n+1})H)^{R(k)}\Phi_\kk^{R(k)}\dot\pi(E_{ij})(\Phi_\kk^{-1})^{R(k)}=(\Delta_{Q_1}H)^{R(k)}.
\end{align*}
This completes the proof of the lemma.
\qed
\end{proof}

The proofs of Propositions \ref{expD1}, and \ref{expE1}
 require simple but lengthy computations. Complete details will be given in \cite{PT5}.

Let $\dot\pi$ be  the irreducible representation of $\liegl(n,\CC)$
of highest weight $\kk$.

\begin{prop}\label{expD1}
For $H\in C^\infty(\CC^n)\otimes \End(V_\kk)$ we have
\begin{align*}
DH& =-\frac14\Bigl(1+\sum_{1\leq j\leq n} |z_j|^2\Bigr)\biggl(\,
\sum_{1\leq r\leq n}
(H_{x_rx_r}+H_{y_ry_r})(1+|z_r|^2)\\
& \quad+\sum_{1\leq r\neq k\leq n} (H_{x_rx_k}+H_{y_ry_k}) \re
(z_r\overline z_k)
-2\sum_{1\leq r\neq k\leq n} H_{x_ry_k} \im (z_r\overline z_k)\biggr)\\
& \quad + \sum_{1\leq r\leq n}\,\frac{\partial H}{\partial z_r} \,\dot\pi\Big ( \sum_{1\leq i,j\leq n} z_i(\delta_{rj}+z_r\overline z_j) E_{ij}\Big),
\end{align*}
\end{prop}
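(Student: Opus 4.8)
The plan is to compute $D_1$ and $D_2$ separately on a function $H$ that descends from $\A$ to $\CC^n$, to realize each left‑invariant complex field $E_{j,n+1}$ and $E_{n+1,j}$ as an explicit first‑order operator in the affine coordinates $z_1,\dots,z_n$, and then to assemble the pieces. The geometric input is that, under $\CC^n\subset P_n(\CC)\simeq G/K$, the subspace $\mathfrak p^{+}=\langle E_{j,n+1}\rangle$ is the holomorphic tangent space and $\mathfrak p^{-}=\langle E_{n+1,j}\rangle$ the antiholomorphic one: at the base point $E_{j,n+1}$ acts as $\partial/\partial z_j$ and $E_{n+1,j}$ as $-\partial/\partial\bar z_j$, and one reaches a general $z$ by transporting with a chosen section $g=g(z)\in G$ of $p$ and exploiting the unitarity relations among the entries of $g(z)$. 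First I would record these coordinate expressions together with the facts already available, namely $E_{j,n+1}\Phi_\kk=0$ and $E_{i,k}\Phi_\kk=\Phi_\kk\dot\pi(E_{ik})$ for $1\le i,k\le n$.

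For the second–order part $D_1H=\sum_j E_{n+1,j}E_{j,n+1}H$ I would substitute the coordinate form of $E_{j,n+1}$, apply $E_{n+1,j}$, and sum over $j$. Since $E_{j,n+1}$ contributes a holomorphic derivative and $E_{n+1,j}$ an antiholomorphic one, only mixed derivatives $\partial_{z_r}\partial_{\bar z_k}H$ survive, the projective corrections supplying the coefficients, and the sum collapses to the Fubini–Study Laplacian
\[
D_1H=-\Bigl(1+\sum_j|z_j|^2\Bigr)\sum_{r,k}(\delta_{rk}+z_r\bar z_k)\,\frac{\partial^2 H}{\partial z_r\,\partial\bar z_k}.
\]
Rewriting $\partial_{z}\partial_{\bar z}$ through $\partial_{x_r}^2+\partial_{y_r}^2=4\,\partial_{z_r}\partial_{\bar z_r}$ and the analogous off–diagonal identities produces exactly the displayed second–order block: the pure holomorphic and pure antiholomorphic terms cancel by the symmetry of $\re(z_r\bar z_k)$ and the antisymmetry of $\im(z_r\bar z_k)$ in $r,k$, and the factor $-\tfrac14(1+\sum|z_j|^2)$ is precisely this conversion constant.

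For the first–order part $D_2H=\sum_j (E_{j,n+1}H)(E_{n+1,j}\Phi_\kk)\Phi_\kk^{-1}$ the factor $E_{j,n+1}H$ reduces to the holomorphic derivative $\partial H/\partial z_j$, so it remains to evaluate $(E_{n+1,j}\Phi_\kk)\Phi_\kk^{-1}$. Differentiating $\Phi_\kk=\pi(A(g))$ in the direction $E_{n+1,j}$ changes only the $j$‑th column of $A(g)$, by the first $n$ entries $b$ of the last column of $g$, and the chain rule for $\pi$ gives $(E_{n+1,j}\Phi_\kk)\Phi_\kk^{-1}=\dot\pi\!\bigl(b\,e_j^{\,T}A(g)^{-1}\bigr)$ after the $\mathrm{Ad}(A)$–twist. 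Inserting the section, where $b=g_{n+1,n+1}\,(z_1,\dots,z_n)^{T}$ and $A(g)^{-1}=g_{n+1,n+1}^{-1}(I+zz^{*})$ by unitarity, the scalar $g_{n+1,n+1}$ cancels and $b\,e_j^{\,T}A^{-1}=z\,e_j^{\,T}(I+zz^{*})$, so that $(E_{n+1,j}\Phi_\kk)\Phi_\kk^{-1}=\dot\pi\!\bigl(\sum_{i,k}z_i(\delta_{jk}+z_j\bar z_k)E_{ik}\bigr)$; summing against $\partial H/\partial z_j$ over $j$ yields the stated first–order term.

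Adding $D_1H$ and $D_2H$ gives the proposition. I expect $D_1$ to be the main obstacle: realizing $E_{n+1,j}E_{j,n+1}$ in projective coordinates and checking that the quadratic corrections assemble into the single coefficient $(1+|z|^2)(\delta_{rk}+z_r\bar z_k)$ demands careful bookkeeping, and the frame–dependence of the individual factors $E_{j,n+1}H$ and $(E_{n+1,j}\Phi_\kk)\Phi_\kk^{-1}$—which cancels only in the $K$‑invariant combination guaranteed by the descent lemma—is the delicate point to control throughout. A secondary, purely algebraic difficulty is keeping the $\End(V_\kk)$‑valued derivatives of $\pi$ in the correct order, which is why the logarithmic–derivative form with the $\mathrm{Ad}(A)$–twist must be used in place of a naive entrywise differentiation.
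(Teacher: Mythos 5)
A preliminary remark: the paper contains no proof of Proposition \ref{expD1} --- it states only that the proof ``requires simple but lengthy computations,'' carried out as in \cite{GPT1} for $P_2(\CC)$, with details deferred to \cite{PT5}. Your plan is exactly that computational route: keep the splitting $D=D_1+D_2$ from Section \ref{reduccionproyectivo}, realize $E_{j,n+1}$ and $E_{n+1,j}$ as first-order operators in the affine coordinates, and assemble. Several of your ingredients are correct and well chosen: writing $g=\left(\begin{smallmatrix}A&b\\ c&d\end{smallmatrix}\right)$ with $d=g_{n+1,n+1}$ and $z=(z_1,\dots,z_n)^t$, the projection gives $b=dz$; and since right translation by $\exp tE_{n+1,j}$ replaces $A$ by $A+t\,b\,e_j^T$, your Ad-twisted logarithmic derivative is exactly right, $(E_{n+1,j}\Phi_\kk)\Phi_\kk^{-1}=\dot\pi\bigl(b\,e_j^TA(g)^{-1}\bigr)$. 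Your real-coordinate conversion of the second-order block, via $\partial^2_{x_r}+\partial^2_{y_r}=4\,\partial_{z_r}\partial_{\bar z_r}$ and the $r\leftrightarrow k$ (anti)symmetrization of $\re(z_r\bar z_k)$ and $\im(z_r\bar z_k)$, is also correct.

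There is, however, a genuine gap in the $D_2$ computation: two of your asserted identities are false, and they happen to compensate. First, ``by unitarity $A(g)^{-1}=g_{n+1,n+1}^{-1}(I+zz^*)$'' is wrong: unitarity gives $AA^*=I-bb^*=I-|d|^2zz^*$, hence $A(g)^{-1}=A^*(I+zz^*)$; your formula replaces the matrix $A^*$ by the scalar $d^{-1}$, which already fails for $n=1$ and cannot hold in general, since $A^{-1}$ is not a function of $z$ alone (right translation by $k\in K$ changes $A$ but not $z$). Second, ``$E_{j,n+1}H$ reduces to $\partial H/\partial z_j$'' holds only at $z=0$: since $\exp tE_{j,n+1}$ adds $t$ times the $j$th column of $g$ to the last column, one finds $E_{j,n+1}H=\tfrac1d\sum_r(A-zc)_{rj}\,\partial H/\partial z_r$, with $c=(g_{n+1,1},\dots,g_{n+1,n})$. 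You do land on the stated formula, but only because the two omitted frame factors contract to the identity in the $j$-sum: using $\sum_j(A-zc)_{rj}e_j^T=e_r^T(A-zc)$ together with the row relations $cA^*=-d\,b^*=-|d|^2z^*$ and $AA^*=I-|d|^2zz^*$, one gets $(A-zc)A^*=I$, whence
\begin{equation*}
D_2H=\sum_{r=1}^n \frac{\partial H}{\partial z_r}\,\dot\pi\bigl(z\,e_r^T(I+zz^*)\bigr)
=\sum_{r=1}^n \frac{\partial H}{\partial z_r}\,\dot\pi\Bigl(\sum_{i,j}z_i(\delta_{rj}+z_r\bar z_j)E_{ij}\Bigr),
\end{equation*}
which is the displayed first-order term. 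So the conclusion is right, but as written the derivation rests on compensating errors --- you correctly flagged frame dependence as the delicate point and then asserted frame-dependent identities as if exact; the repair is the short contraction above. A smaller gap of the same kind occurs in $D_1$: the claim that ``only mixed derivatives survive'' ignores possible first-order terms produced when $E_{n+1,j}$ hits the frame coefficients of $E_{j,n+1}$. These do vanish, but this needs an argument --- for instance, $D_1H=\Delta_PH$ and $\Delta_P\in D(G)^K$ descends to a $G$-invariant differential operator on the rank-one space $P_n(\CC)$, hence a polynomial in the Fubini--Study Laplacian, with no constant term because $\Delta_P$ kills constants; granting this, your identification of $D_1$ with $-\bigl(1+\sum_j|z_j|^2\bigr)\sum_{r,k}(\delta_{rk}+z_r\bar z_k)\partial_{z_r}\partial_{\bar z_k}$ and the subsequent conversion to the displayed second-order block are correct.
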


\smallskip
\begin{prop}\label{expE1} For $H\in C^\infty(\CC^n)\otimes \End(V_\kk)$ we have
\begin{align*}
EH&=-\frac14\Bigl(1+\sum_{1\leq j\leq n} |z_j|^2\Bigr)\biggl(
\sum_{1\leq r,k\leq n}(H_{x_rx_k}+H_{y_ry_k}) \dot\pi\Bigl(\sum_{1\leq j\leq n}(\delta_{kj}+z_k\overline z_j)E_{rj}
\Bigr)\\
&+i\sum_{1\leq r,k\leq n} H_{x_ry_k} \dot\pi\Bigl(\sum_{1\leq j\leq n}(\delta_{jr}+z_r\overline z_j)E_{kj}
-(\delta_{kj}+z_k\overline z_j)E_{rj}\Bigr)\biggr)\\ &
+ \sum_{1\leq r,k\leq n}\frac{\partial H}{\partial z_r}\dot\pi\Bigl(\sum_{1\leq j\leq n} z_jE_{jk}\Bigr)
\dot\pi\Bigl(\sum_{1\leq j\leq n}(\delta_{rj}+z_r\overline z_j)E_{kj}\Bigr).
\end{align*}
\end{prop}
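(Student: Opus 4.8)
The plan is to reduce the statement to an explicit coordinate computation on $\A$ and then descend to $\CC^n$. The lemma above shows that $E_1$ and $E_2$ each preserve $C^\infty(\A)^K\otimes\End(V_\kk)$, so $E_1H$ and $E_2H$ are right-$K$-invariant and may be evaluated at any convenient representative $g_0\in\A$ of a given point $z=p(g_0)$; the intermediate, non-$K$-invariant quantities will depend on the choice of $g_0$, but only their assembled combination needs to come out independent of it. First I would record the simplification $\Phi_\kk\dot\pi(E_{ij})\Phi_\kk^{-1}=\dot\pi\bigl(A(g)E_{ij}A(g)^{-1}\bigr)$, coming from $\Phi_\kk=\pi(A(g))$, so that
\begin{align*}
E_1H&=\sum_{i,j=1}^n(E_{n+1,i}E_{j,n+1}H)\,\dot\pi\bigl(A(g)E_{ij}A(g)^{-1}\bigr),\\
E_2H&=\sum_{i,j=1}^n(E_{j,n+1}H)(E_{n+1,i}\Phi_\kk)\dot\pi(E_{ij})\Phi_\kk^{-1}.
\end{align*}
Everything is then driven by three ingredients: the first derivatives $E_{j,n+1}H$, the second derivatives $E_{n+1,i}E_{j,n+1}H$, and the logarithmic derivative $(E_{n+1,i}\Phi_\kk)$.

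Second, I would compute the first-order data geometrically. Writing $z_k=g_{k,n+1}/g_{n+1,n+1}$ on $\A$ and differentiating $t\mapsto p(g\exp tE_{j,n+1})$, one gets $E_{j,n+1}z_k=g_{kj}/g_{n+1,n+1}-z_k\,g_{n+1,j}/g_{n+1,n+1}$ and $E_{j,n+1}\bar z_k=\overline{E_{j,n+1}z_k}$, and similarly for $E_{n+1,i}$. Fixing a unitary section $g(\cdot)\colon\CC^n\to\A$ with last column $(1+|z|^2)^{-1/2}(z_1,\dots,z_n,1)$ and upper-left corner $A(g(z))=I-\tfrac{1-s}{|z|^2}zz^*$, $s=(1+|z|^2)^{-1/2}$ (with $z$ regarded as a column vector), turns these into explicit first-order operators in $z_r,\bar z_r$, and hence in $x_r,y_r$; for instance $E_{j,n+1}z_k=\sqrt{1+|z|^2}\,\delta_{kj}+\tfrac{z_k\bar z_j}{1+s}$. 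For $\Phi_\kk$ I would use that $E_{n+1,i}$ inserts the last column of $g$ into the $i$-th column of $A(g)$, giving $(E_{n+1,i}\Phi_\kk)=\Phi_\kk\dot\pi(M_i)$ with $M_i$ an explicit rank-one matrix built from $z$ and $A(g)^{-1}$; after conjugation, $\Phi_\kk\dot\pi(M_i)\dot\pi(E_{ij})\Phi_\kk^{-1}=\dot\pi(A(g)M_iA(g)^{-1})\dot\pi(A(g)E_{ij}A(g)^{-1})$ produces exactly the products $\dot\pi(\sum_j z_jE_{jk})\dot\pi(\sum_j(\delta_{rj}+z_r\bar z_j)E_{kj})$ appearing in the statement.

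Third comes the second-order term, which is the crux. Because $E_{n+1,i}$ and $E_{j,n+1}$ do not commute, with $[E_{n+1,i},E_{j,n+1}]=\delta_{ij}E_{n+1,n+1}-E_{ji}\in\liek_\CC$, the mixed derivative $E_{n+1,i}E_{j,n+1}H$ splits into a genuine Hessian part, contributing the $H_{x_rx_k}$, $H_{y_ry_k}$ and $H_{x_ry_k}$ terms, and first-order corrections coming both from this commutator and from the second-order jet of the section. Substituting into $E_1H+E_2H$, I would collect the second- and first-order terms separately: the Hessian part paired with $\dot\pi(A E_{ij}A^{-1})$ must collapse to the coefficient $-\tfrac14(1+|z|^2)\dot\pi\bigl(\sum_j(\delta_{kj}+z_k\bar z_j)E_{rj}\bigr)$ and its $H_{x_ry_k}$ analogue, while the antiholomorphic first-order contributions of $E_2H$ (through $E_{j,n+1}\bar z_k\neq0$) must cancel against those produced by $E_1H$, leaving only the single term $\sum_{r,k}\partial H/\partial z_r\;\dot\pi(\sum_j z_jE_{jk})\dot\pi(\sum_j(\delta_{rj}+z_r\bar z_j)E_{kj})$.

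The main obstacle is precisely this last bookkeeping: the unitary section introduces factors such as $\sqrt{1+|z|^2}$ and $(1+s)^{-1}$ in the individual velocities and in the second-order jet, and these must be shown to cancel upon assembly, leaving only the polynomial coefficients $(1+|z|^2)(\delta_{kj}+z_k\bar z_j)$ of the statement; in addition one must verify that the genuine $g$-dependence (beyond $z$) of the intermediate terms disappears from $E_1H+E_2H$. I expect the cleanest control is to use the unitarity relations $g^*g=gg^*=I$ together with Lemma \ref{menores} to rewrite the offending combinations of entries of $g$, and, as a check or to shorten the algebra, to verify the identity first at the rotation-normalized points $z=(\rho,0,\dots,0)$ and then extend to all $z$ by the equivariance $H(kg)=\pi(k)H(g)\pi(k^{-1})$ and the left $K$-invariance of $\Delta_Q$. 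The parallel computation for $\Delta_P$ in Proposition \ref{expD1} furnishes both a template and a consistency check for the second-order elliptic part.
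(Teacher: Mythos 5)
The paper itself contains no proof of Proposition \ref{expE1}: it states that the ``simple but lengthy computations'' are deferred to \cite{PT5}, done in the style of \cite{GPT1}. So your plan can only be measured against the computational route the paper gestures at, which is indeed the route you chose, and much of your scaffolding is correct: the reduction via right $K$-invariance, the conjugation identity $\Phi_\kk\dot\pi(E_{ij})\Phi_\kk^{-1}=\dot\pi\bigl(A(g)E_{ij}A(g)^{-1}\bigr)$, the velocity $E_{j,n+1}z_k=(g_{kj}-z_kg_{n+1,j})/g_{n+1,n+1}$ (and its value at your unitary section), and the rank-one formula $E_{n+1,i}\Phi_\kk=\Phi_\kk\,\dot\pi(A(g)^{-1}b\,e_i^t)$ with $b=(g_{1,n+1},\dots,g_{n,n+1})^t$. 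However, your conjugation rule $E_{j,n+1}\bar z_k=\overline{E_{j,n+1}z_k}$ is wrong, and the error sits at the base of all your first- and second-order bookkeeping. For the complexified left-invariant fields the paper uses ($X=A+iB$ with $A,B\in\liesu(n+1)$ and $Xf=Af+iBf$, all derivatives along curves \emph{inside} $G$), conjugation obeys $\overline{Xf}=\sigma(X)\overline{f}$ with $\sigma(X)=-X^{*}$; hence $E_{j,n+1}\bar z_k=\overline{-E_{n+1,j}z_k}=0$ and, dually, $E_{n+1,i}z_k=0$ while $E_{n+1,i}\bar z_k=-\overline{E_{i,n+1}z_k}\neq0$. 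Your rule comes from differentiating along $t\mapsto g\exp(tE_{j,n+1})$, a curve that leaves $G=\SU(n+1)$, and computes a different operator from the intrinsic one defining $\Delta_P$ and $\Delta_Q$. The discrepancy is fatal, not cosmetic: applied consistently, your rule makes $E_{n+1,i}$ annihilate every function of $(z,\bar z)$ (you get $E_{n+1,i}z_k=0$ and then $E_{n+1,i}\bar z_k=\overline{E_{n+1,i}z_k}=0$), so the second-order part of $E_1H$ --- and even $\Delta_PH$ itself --- would vanish identically, contradicting Proposition \ref{expD1}; the nonzero derivative $E_{n+1,i}\bar z_k$ under the correct rule is the \emph{sole} source of the elliptic combinations $H_{x_rx_k}+H_{y_ry_k}$, $H_{x_ry_k}$, i.e.\ of the mixed derivatives $H_{z_r\bar z_k}$, in both propositions. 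The very consistency check against Proposition \ref{expD1} that you propose would have flagged this.

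With the correct relations the bookkeeping you fear largely evaporates, which shows your plan also misjudges where the work lies: the ``antiholomorphic first-order contributions of $E_2H$'' that you intend to cancel against $E_1H$ do not exist ($E_{j,n+1}H=\sum_r H_{z_r}a_{rj}$ with $a_{rj}=(g_{rj}-z_rg_{n+1,j})/g_{n+1,n+1}$, with no $\partial H/\partial\bar z$ terms), and the first-order corrections inside $E_1H$ vanish outright, since $E_{n+1,i}a_{rj}=\delta_{ij}(g_{r,n+1}-z_rg_{n+1,n+1})/g_{n+1,n+1}=0$; thus $E_{n+1,i}E_{j,n+1}H=\sum_{r,k}H_{z_r\bar z_k}\,(E_{n+1,i}\bar z_k)\,a_{rj}$ is purely second order, every first-order term of $EH$ comes from $E_2H$, and no analysis of the commutator $[E_{n+1,i},E_{j,n+1}]$ is needed. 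Even granting these repairs, what you submitted is a strategy, not a proof: the decisive assembly --- that the Hessian part collapses to $-\tfrac14\bigl(1+\sum_j|z_j|^2\bigr)\dot\pi\bigl(\sum_j(\delta_{kj}+z_k\bar z_j)E_{rj}\bigr)$ and its $H_{x_ry_k}$ companion, and that $E_2H$ collapses to $\sum_{r,k}H_{z_r}\,\dot\pi\bigl(\sum_j z_jE_{jk}\bigr)\dot\pi\bigl(\sum_j(\delta_{rj}+z_r\bar z_j)E_{kj}\bigr)$ --- is asserted (``must collapse'', ``must cancel'') but never carried out, and it is exactly there that the unitarity relations and Lemma \ref{menores} (e.g.\ $\det A(g)=\overline{g_{n+1,n+1}}$) must be deployed. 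Finally, your proposed shortcut of verifying the identity only at $z=(\rho,0,\dots,0)$ needs a step you do not supply: the proposition concerns arbitrary $H\in C^\infty(\CC^n)\otimes\End(V_\kk)$ with no equivariance hypothesis, so before restricting to the meridian you must prove that the explicit right-hand side operator is itself $K$-equivariant for the action $H\mapsto\pi(k)^{-1}H(k\cdot z)\pi(k)$, which is an additional (if routine) computation.
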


\subsection{Reduction to one variable }\label{secc1var} \mbox{} \\
 We are
interested in considering the differential operators $D$ and $E$ applied to a
function $H\in C^\infty(\CC^n)\otimes \End(V_\pi)$ such that
$H(kp)=\pi(k)H(p)\pi(k)^{-1}$, for all $k\in K$ and $p$ in the
affine complex space $\CC^n$. This property of $H$ allows us to find
ordinary differential operators $\tilde D$ and $\tilde E$ defined on the interval
$(0,\infty)$ such that
$$(D\,H)(r,0,\dots ,0)=(\tilde D\tilde H)(r),\quad (E\,H)(r,0,\dots ,0)=(\tilde E\tilde H)(r),$$
where $\tilde H(r)=H(r,0,\dots, 0)$. Let $(r,\mathbf 0)=(r,0,\dots, 0)\in \CC^n$.
We also introduce
differential operators $\tilde D_1$, $\tilde D_2$, $\tilde E_1$ and $\tilde E_2$  in the same way, that is  $(D_1 H)(r,\mathbf 0)=(\tilde D_1\tilde H)(r)$, $(D_2
H)(r,\mathbf 0)=(\tilde D_2\tilde H)(r)$, $(E_1 H)(r,\mathbf 0)=(\tilde E_1\tilde H)(r)$   and $(E_2 H)(r,\mathbf 0)=(\tilde E_2\tilde H)(r)$.

In order to give the explicit expressions of $\tilde D$ and $\tilde E$ we need to
compute a number of second order partial derivatives at the points $(r,\mathbf 0)$ of the function
$H:\CC^n\longrightarrow \End(V_\pi)$. We recall that $[0,\infty]$ parameterizes the set of $K$-orbits in $P_n(\CC)$.
Given $z_1, z_j\in\CC$,  $ 2\leq j\leq n$, we need a matrix in $K$ such that carries the point $(z_1,0,\dots,0,z_j,0,\dots,0)\neq 0$ to the meridian
$\{(r, \mathbf 0) : r>0\}$.
A good choice is the following $n\times n$ matrix
\begin{equation}\label{A2x2}
A(z_1,z_j)=\frac1{s(z_1,z_j)}\Big(z_1E_{11}-\overline z_jE_{1j}+z_jE_{j1}+\overline z_1E_{jj}+\sum_{k\ne 1,j}E_{kk}\Big)\in \SU(n),
\end{equation}
where  $s(z_1,z_j)=\sqrt{|z_1|^2+|z_j|^2}\ne0$.
Then
$$(z_1,0,\dots,0,z_j,0,\dots,0)^t=A(z_1,z_j)(s(z_1,z_j),0,\dots,0)^t.$$

The proofs of the following theorems  are similar as in the case of the complex projective plane, considered in \cite{GPT1}.
Complete details will appear in the forthcoming paper \cite {PT5}.

\begin{thm}\label{main'} For all $r>0$ we have
\begin{align*}
\tilde D\tilde H&= -\frac 14 \Bigg((1+r^2)^2 \frac{d^2\tilde H}{dr^2}(r)+
\frac{(1+r^2)}r \frac{d\tilde H}{dr}(r)\left( 2n-1+r^2-2r^2\dot\pi(E_{11}) \right)\\
& +\frac{4}{r^2}\sum_{2\leq j \leq n} \big [\dot\pi(E_{j1}), \tilde H(r) \big] \dot\pi(E_{1j})+\frac{4(1+r^2)}{r^2} \sum_{2\leq j \leq
n}\big[\dot\pi(E_{1j}),\tilde H(r)\big ]\dot\pi(E_{j1})\Bigg).
\end{align*}
\end{thm}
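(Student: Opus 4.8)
The plan is to read off $\tilde D\tilde H$ from the closed expression for $DH$ in Proposition~\ref{expD1} by specializing it to the meridian points $(r,\mathbf 0)$ and then expressing every partial derivative that occurs there through $\tilde H$, $\tilde H'$, $\tilde H''$ by means of the covariance property $H(k\cdot p)=\pi(k)H(p)\pi(k)^{-1}$. At $(r,\mathbf 0)$ one has $z_1=r$ and $z_j=0$ for $j\ge 2$, so $\re(z_r\overline z_k)=\im(z_r\overline z_k)=0$ for all $r\ne k$; hence every mixed second-derivative term in the first bracket of Proposition~\ref{expD1} drops out and only the diagonal sum $\sum_j(H_{x_jx_j}+H_{y_jy_j})(1+|z_j|^2)$ survives, carrying weight $1+r^2$ on the index $j=1$ and weight $1$ on the indices $j\ge 2$. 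The first-order term of Proposition~\ref{expD1} collapses, upon setting $z_i=r\,\delta_{i1}$, to $r(1+r^2)\,\tfrac{\partial H}{\partial z_1}\,\dot\pi(E_{11})+r\sum_{j\ge 2}\tfrac{\partial H}{\partial z_j}\,\dot\pi(E_{1j})$ evaluated at $(r,\mathbf 0)$.

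The heart of the matter is the evaluation of these derivatives from covariance. For $j\ge 2$ the matrix $A(r,z_j)$ of \eqref{A2x2} carries $(s,\mathbf 0)$ to $(r,0,\dots,z_j,\dots,0)$ with $s=\sqrt{r^2+|z_j|^2}$, so $H=\pi(A(r,z_j))\,\tilde H(s)\,\pi(A(r,z_j))^{-1}$; for the first coordinate I would use a one-parameter subgroup of $K$ realizing the phase rotation of $z_1$, concretely $\exp(i\varphi(E_{11}-E_{22}))$, to reach $(r+iy_1,\mathbf 0)$. Writing $A(r,z_j)=I+A_1+A_2+\cdots$ with $A_1=\tfrac1r(z_jE_{j1}-\overline z_jE_{1j})$ and $A_2=-\tfrac{|z_j|^2}{2r^2}(E_{11}+E_{jj})$, expanding the conjugation through $\tfrac{d^2}{dt^2}\pi(\exp tX)|_{0}=\dot\pi(X)^2$, and using $\partial_{x_j}^2+\partial_{y_j}^2=4\,\partial_{z_j}\partial_{\overline z_j}$, I obtain for $j\ge 2$
\[
(H_{x_jx_j}+H_{y_jy_j})(r,\mathbf 0)=\tfrac2r\tilde H'(r)-\tfrac{2}{r^2}\big([\dot\pi(E_{j1}),[\dot\pi(E_{1j}),\tilde H]]+[\dot\pi(E_{1j}),[\dot\pi(E_{j1}),\tilde H]]\big),
\]
together with $H_{x_1x_1}=\tilde H''$, $H_{y_1y_1}=\tfrac1r\tilde H'-\tfrac1{r^2}[\dot\pi(E_{11}-E_{22}),[\dot\pi(E_{11}-E_{22}),\tilde H]]$, $\tfrac{\partial H}{\partial z_j}=\tfrac1r[\dot\pi(E_{j1}),\tilde H]$ and $\tfrac{\partial H}{\partial z_1}=\tfrac12\tilde H'+\tfrac1{2r}[\dot\pi(E_{11}-E_{22}),\tilde H]$.

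It then remains to assemble these with the weights $-\tfrac14(1+r^2)^2$ and $-\tfrac14(1+r^2)$, to add the collapsed first-order contribution, and finally to reduce the double commutators to the single-commutator form of the statement; this bookkeeping is the step I expect to be the main obstacle. By the Jacobi identity,
\[
[\dot\pi(E_{j1}),[\dot\pi(E_{1j}),\tilde H]]+[\dot\pi(E_{1j}),[\dot\pi(E_{j1}),\tilde H]]=-2[\dot\pi(E_{j1}),\tilde H]\dot\pi(E_{1j})-2[\dot\pi(E_{1j}),\tilde H]\dot\pi(E_{j1})+[\dot\pi(E_{j1})\dot\pi(E_{1j})+\dot\pi(E_{1j})\dot\pi(E_{j1}),\tilde H].
\]
The two leading pieces, weighted by $-\tfrac14(1+r^2)$, already produce the net coefficient $-\tfrac{1+r^2}{r^2}$ in front of each of $[\dot\pi(E_{j1}),\tilde H]\dot\pi(E_{1j})$ and $[\dot\pi(E_{1j}),\tilde H]\dot\pi(E_{j1})$; the collapsed first-order term $\sum_{j\ge 2}[\dot\pi(E_{j1}),\tilde H]\dot\pi(E_{1j})$ adds $+1$ to the first coefficient only, turning $-\tfrac{1+r^2}{r^2}$ into $-\tfrac1{r^2}$, so that inside the overall factor $-\tfrac14$ one gets precisely the asymmetric coefficients $\tfrac4{r^2}$ and $\tfrac{4(1+r^2)}{r^2}$ of the theorem. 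The decisive simplification is that the remaining pieces — the anticommutator terms $[\dot\pi(E_{j1})\dot\pi(E_{1j})+\dot\pi(E_{1j})\dot\pi(E_{j1}),\tilde H]$, the double commutator from $H_{y_1y_1}$, and the term $\tfrac{1+r^2}2[\dot\pi(E_{11}-E_{22}),\tilde H]\dot\pi(E_{11})$ — must cancel. Here I would invoke the Schur-type fact established earlier, that $\tilde H(r)$ is scalar on each $M$-irreducible summand and hence commutes with $\dot\pi(\liem)$: this yields $[\dot\pi(E_{jk}),\tilde H]=0$ for $2\le j,k\le n$ together with $[\dot\pi(E_{jj}-E_{kk}),\tilde H]=0$ and $[\dot\pi(2E_{jj}-E_{11}),\tilde H]=0$ for $j,k\ge 2$, so that $[\dot\pi(E_{jj}),\tilde H]=\tfrac12[\dot\pi(E_{11}),\tilde H]$ and the spurious $E_{jj}$ and $E_{22}$ contributions drop. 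After this cancellation the scalar part of the $\tilde H'$-coefficient sums to $\tfrac{1+r^2}{r}(2n-1+r^2)$, the surviving $\dot\pi(E_{11})$ piece of the first-order term supplies the $-2r^2\dot\pi(E_{11})$ correction, and the $\tilde H''$-coefficient is $(1+r^2)^2$, which is exactly the asserted formula for $\tilde D\tilde H$.
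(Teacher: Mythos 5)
Your proposal is correct, and it supplies an argument the paper itself omits: for Theorem \ref{main'} the authors give no proof at all, saying only that it is ``similar as in the case of the complex projective plane'' (\cite{GPT1}) with details deferred to \cite{PT5}. Your computation is precisely that reduction carried out for general $n$, so it matches the intended method while filling the actual gap. I checked the key steps and they hold: at $(r,\mathbf 0)$ all mixed second-order terms of Proposition \ref{expD1} vanish and the first-order sum collapses to $r(1+r^2)\,\tfrac{\partial H}{\partial z_1}\dot\pi(E_{11})+r\sum_{j\ge2}\tfrac{\partial H}{\partial z_j}\dot\pi(E_{1j})$; since $\tfrac12A_1^2=A_2$, the matrix of \eqref{A2x2} agrees with $\exp A_1$ to second order, which validates your formulas for $H_{x_jx_j}+H_{y_jy_j}$, $\tfrac{\partial H}{\partial z_j}$, $H_{y_1y_1}$ and $\tfrac{\partial H}{\partial z_1}$; your commutator rearrangement is an exact algebraic identity; and the coefficient bookkeeping ($1+r^2+2(n-1)=2n-1+r^2$, the $+1$ shift from the collapsed first-order term producing the asymmetric pair $\tfrac4{r^2}$, $\tfrac{4(1+r^2)}{r^2}$, and the $-2r^2\dot\pi(E_{11})$ correction) all comes out right.

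One point deserves sharpening. The relations $[\dot\pi(E_{jj}),\tilde H]=0$ for $2\le j\le n$ do not follow from commutation with $\dot\pi(\liem)$ alone: $E_{jj}$ itself is not in $\liem_\CC$, only combinations such as $E_{jj}-E_{kk}$ and $2E_{jj}-E_{11}$ are. They follow from the stronger fact, established in Section 1 via the central element $e^{ir}I$, that $\tilde H(r)$ commutes with the full $\U(n-1)$-action. Granting that, your list of relations actually forces $[\dot\pi(E_{11}),\tilde H]=0$ (combine $[\dot\pi(2E_{jj}-E_{11}),\tilde H]=0$ with $[\dot\pi(E_{jj}),\tilde H]=0$; equivalently, $\dot\pi(E_{11})$ acts by the scalar $m+\ell-s$ on each $V_{\mathbf t(s)}$, as used in the proof of Proposition \ref{matrixD}). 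Consequently your three leftover groups do not merely cancel against one another: each vanishes identically. Indeed $[T,\tilde H]=0$ with $T=\dot\pi(E_{11}-E_{22})$ kills both the $H_{y_1y_1}$ double commutator and the term $\tfrac{1+r^2}{2}[T,\tilde H]\dot\pi(E_{11})$, while $\bigl[\sum_{j\ge2}\bigl(\dot\pi(E_{j1})\dot\pi(E_{1j})+\dot\pi(E_{1j})\dot\pi(E_{j1})\bigr),\tilde H\bigr]=0$ because, by \eqref{operador} and \eqref{operador2}, this anticommutator is a combination of $\Delta^{(n)}$, $\Delta^{(n-1)}$, $E_{11}$ and $\sum_{j\ge2}E_{jj}$, all acting diagonally with respect to the decomposition \eqref{U(n-1)subrepresentations}. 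With that attribution corrected, your assembly reproduces the stated formula exactly.
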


\smallskip
\begin{thm}\label{main''} For all $r>0$ we have

\begin{align*}
\tilde E \tilde H =& -\frac{1}4\Bigg((1+r^2)^2\frac{d^2\tilde H}{dr^2}\dot\pi(E_{11}) +\frac{(1+r^2)}r \big(2n-1+r^2-2r^2\dot\pi(E_{11})\big) \frac{d\tilde H}{dr}\dot\pi(E_{11})
\\
&
+\frac{2(1+r^2)}{r}\sum_{ j=2}^ n \dot\pi(E_{j1})\frac{d\tilde H}{dr}\dot\pi(E_{1j})
-\frac{2(1+r^2)^2}{r}\sum_{j=2}^n \dot\pi(E_{1j})\frac{d\tilde H}{dr}\dot\pi(E_{j1})
\\
&-\frac{2(1+r^2)}{r^2}\sum_{j,k=2}^n\Big(\big[\dot\pi(E_{k1}),\big[\dot\pi(E_{1j}),\tilde
H\big]\big]+\big[\dot\pi(E_{1j}),\big[\dot\pi(E_{k1}),\tilde
H\big]\big]\Big)\dot\pi(E_{jk}) \\
&
-4\sum_{k=1}^n\sum_{j=2}^n[\dot\pi(E_{j1}),\tilde H]\dot\pi(E_{1k})\dot\pi(E_{kj}) \Bigg).
\end{align*}

\end{thm}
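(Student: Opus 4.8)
The plan is to derive the formula for $\tilde E\tilde H$ by restricting to the meridian $\{(r,\mathbf 0):r>0\}$ the expression for $EH$ obtained in Proposition \ref{expE1}. That expression is a linear combination of the second partial derivatives $H_{x_rx_k}+H_{y_ry_k}$ and $H_{x_ry_k}$ and of the first derivatives $\partial H/\partial z_r$, each multiplied by explicit polynomials in the $z_i,\overline z_i$ and by operators $\dot\pi(E_{ij})$. I would evaluate all of these at $(r,\mathbf 0)$ and rewrite every surviving derivative of $H$ in terms of $\tilde H(r),\tilde H'(r),\tilde H''(r)$ and of iterated commutators of $\tilde H(r)$ with the $\dot\pi(E_{ij})$. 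The whole reduction rests on the equivariance $H(k\cdot p)=\pi(k)H(p)\pi(k)^{-1}$, which says that along a $K$-orbit $H$ is conjugate to its value on the meridian; hence derivatives in directions tangent to the orbit turn into commutators, while the single radial direction produces the ordinary derivatives of $\tilde H$.

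For the derivative computations I would split the $2n$ real directions at $(r,\mathbf 0)$ into the radial direction $x_1$, the phase direction $y_1$, and the transverse directions $x_j,y_j$ with $2\le j\le n$. Differentiating $\tilde H(r)=H(r,\mathbf 0)$ directly gives $H_{x_1}(r,\mathbf 0)=\tilde H'(r)$ and $H_{x_1x_1}(r,\mathbf 0)=\tilde H''(r)$. For the transverse directions I would use the matrices $A(z_1,z_j)$ of \eqref{A2x2}, which satisfy $A(r,0)=I$ and carry $(s(z_1,z_j),\mathbf 0)$ to the point with coordinate $z_1$ in the first slot and $z_j$ in the $j$th slot; writing $H=\pi(A(z_1,z_j))\,\tilde H(s(z_1,z_j))\,\pi(A(z_1,z_j))^{-1}$ and expanding $A(z_1,z_j)$ and $s(z_1,z_j)$ about $z_j=0$ yields, to first order, $\partial H/\partial z_j\,(r,\mathbf 0)=\tfrac1r[\dot\pi(E_{j1}),\tilde H(r)]$, since $\partial_{x_j}s=\partial_{y_j}s=0$ there. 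Pushing the same expansion to second order, and picking up the radial contribution $\tilde H''$ coming from $\partial^2 s$, produces the nested commutators $[\dot\pi(E_{k1}),[\dot\pi(E_{1j}),\tilde H]]$ that appear in the statement.

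Then I would substitute these expressions into Proposition \ref{expE1}, set $z_1=r$ and $z_2=\dots=z_n=0$, and simplify. The key algebraic input is that on the meridian $\tilde H(r)$ commutes with $\dot\pi(E_{pq})$ unless exactly one of $p,q$ equals $1$; this follows from the $M$-invariance of $\Phi^{\mm,\kk}(a)$ established earlier via Schur's Lemma, since the complexification of $\liem$ together with the centre of $\liek$ contains $E_{pq}$ for all such pairs. In particular $[\dot\pi(E_{11}),\tilde H]=0$, so the phase direction $y_1$ and the mixed term $H_{x_1y_1}$ contribute nothing, and only the commutators with $\dot\pi(E_{1j}),\dot\pi(E_{j1})$ for $2\le j\le n$ survive. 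Separating the index sums according to whether each index equals $1$ or lies in $\{2,\dots,n\}$ then organizes the result into the four groups of terms displayed in the theorem.

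The main obstacle will be the bookkeeping for the second-order transverse derivatives. Unlike the operator $D$ of Theorem \ref{main'}, the operator $E$ carries two factors $\dot\pi(\cdot)\dot\pi(\cdot)$ together with a double index sum, so the quadratic part of $\pi(A(z_1,z_j))$ generates products of the form $\dot\pi(X)\tilde H\dot\pi(Y)$ that must be recombined, using $[\dot\pi(E_{11}),\tilde H]=0$ and the commutation relations among the $\dot\pi(E_{ij})$, into the nested commutators and into the single-commutator terms $[\dot\pi(E_{j1}),\tilde H]\dot\pi(E_{1k})\dot\pi(E_{kj})$ without leaving spurious remainders. Comparison with the already-established Theorem \ref{main'} (the $\Delta_P$ reduction) provides a useful consistency check on the leading and first-order terms.
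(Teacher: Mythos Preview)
Your plan matches the paper's approach exactly. In fact the paper does not give a proof of this theorem at all: right before Theorems~\ref{main'} and~\ref{main''} it states that ``the proofs of the following theorems are similar as in the case of the complex projective plane, considered in \cite{GPT1}. Complete details will appear in the forthcoming paper \cite{PT5}.'' The machinery it sets up in Subsection~\ref{secc1var} --- the $K$-equivariance $H(k\cdot p)=\pi(k)H(p)\pi(k)^{-1}$, the matrices $A(z_1,z_j)$ of \eqref{A2x2} carrying points back to the meridian, and the remark that one must compute the second partial derivatives of $H$ at $(r,\mathbf 0)$ --- is precisely the scheme you describe, and is the same scheme used in \cite{GPT1} for $n=2$.

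One small correction: you write that Theorem~\ref{main'} is ``already-established'' and can be used as a consistency check, but in this paper Theorem~\ref{main'} is in the same boat as Theorem~\ref{main''}; its proof is likewise deferred. The consistency check is still available, of course, once you have carried out the derivative computations for $\tilde D$ first. Otherwise the outline is sound: the radial direction gives $\tilde H',\tilde H''$; the phase direction $y_1$ drops out because $[\dot\pi(E_{11}),\tilde H]=0$; the transverse directions $x_j,y_j$ ($2\le j\le n$) produce, via the expansion of $\pi(A(z_1,z_j))$, the commutators with $\dot\pi(E_{1j})$ and $\dot\pi(E_{j1})$; and the $M$-invariance of $\tilde H$ kills all commutators with $\dot\pi(E_{pq})$ for $2\le p,q\le n$. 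The bookkeeping you flag --- reorganizing the products $\dot\pi(X)\tilde H\dot\pi(Y)$ arising from the quadratic expansion into the nested commutators and the $[\dot\pi(E_{j1}),\tilde H]\dot\pi(E_{1k})\dot\pi(E_{kj})$ terms --- is indeed where the labor lies, and is exactly what \cite{GPT1} does for $P_2(\CC)$.
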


\subsection{The operators $\tilde D$ and $\tilde E$ in matrix form}
\mbox{} \\
From now on we will assume that the $K$-types of our spherical functions will be of
the one step kind, i.e., for $1\le k\le n-1$ let
\begin{equation}\label{onestep1}
\kk=(\underbrace{m+\ell,\dots,m+\ell}_{k},\underbrace{m,\dots,m}_{n-k}).
\end{equation}

Therefore, the irreducible spherical functions $\Phi$ of the pair $(G,K)$, whose
$K$-type is as in \eqref{onestep1}, are parameterized by the $n+1$-tuples  $\mm$ of the form

\begin{equation}\label{n+1tupla}
\mm=\mm(w,r)=(w+m+\ell,\underbrace{m+\ell,\dots,m+\ell}_{k-1},m+r,\underbrace{m,\dots,m}_{n-k-1},-w-r ),
\end{equation}
where  $0\leq w$,  $m\geq -w-r$ and $0\leq r \leq\ell$. See Proposition \ref{equivalencia} and Remark \ref{Ktipo}.
Thus if we assume $w\ge w_0=\max\{0,-m\}$ and  $0\leq r \leq\ell$ all the conditions are satisfied.

We observe now that the decomposition \eqref{U(n-1)subrepresentations}
of $V_\kk$ into $\U(n-1)$-submodules is
\begin{equation}\label{onestep2}
V_\kk=\bigoplus_{s=0}^\ell V_{\mathbf t(s)},
\end{equation}
where
\begin{equation*}
\mathbf
t(s)=(\underbrace{m+\ell,\dots,m+\ell}_{k-1},m+s,\underbrace{m,\dots,m}_{n-k-1}).
\end{equation*}

\smallskip
We recall that for any $0<r<\infty$, $\tilde H(r)$ is a linear
operator of $V_\kk$ which commutes with the action of $U(n-1)$. Thus
$\tilde H(r)$ is a scalar transformation on each $U(n-1)$-submodule
$V_{\mathbf t(s)}$. We denote this scalar by  $\tilde h_s(r)$ and we
will identify the function $\tilde H(r)$ associated to $\Phi$ with
the column vector function:
$$ \tilde H(r)=(\tilde h_0(r), \dots , \tilde h_\ell(r))^t.$$

The expression of the differential operator $\tilde D$ in Theorem \ref{main'} is given
in terms of linear operators of the vector space $V_\kk$. Now we proceed to give $\tilde D$
as a system of linear differential operators in  the $\ell+1$ unknowns $\tilde h_0(r),\dots,\tilde h_\ell(r)$.

\begin{prop}\label{matrixD} For all $r>0$ and $0\le s\le\ell$ we have
\begin{align*}
  ( \tilde D \tilde H)_s(r)= & -\frac{(1+r^2)^2}4\tilde h_s''(r)-\frac{(1+r^2)}{4r}
\bigl(2n-1+r^2-2(m+\ell-s)r^2\bigr)\tilde h_s'(r)
  \\ &-
   \frac{1}{r^2}\,s(\ell+k-s) \bigl(\tilde h_{s-1}(r)-\tilde h_{s}(r)\bigr)\\
   &
      -\frac{(1+r^2)}{r^2}(\ell-s)(n-k+s) \bigl(\tilde h_{s+1}(r)-\tilde h_s(r)\bigr).
\end{align*}
\end{prop}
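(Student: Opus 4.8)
The plan is to evaluate each of the four terms of $\tilde D$ in Theorem \ref{main'} on the irreducible $\U(n-1)$-submodule $V_{\mathbf t(s)}$, using that $\tilde H(r)$ acts there by the scalar $\tilde h_s(r)$, i.e. $\tilde H(r)=\sum_{s=0}^\ell \tilde h_s(r)P_s$ with $P_s$ the projection onto $V_{\mathbf t(s)}$. The first observation is that $\dot\pi(E_{11})$ commutes with $\dot\pi(\liegl(n-1,\CC))$, since $[E_{11},E_{ij}]=0$ for $2\le i,j\le n$; hence $\dot\pi(E_{11})$ preserves each $V_{\mathbf t(s)}$ and acts there by a scalar. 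Comparing the action of $\sum_{i=1}^nE_{ii}$ on $V_\kk$ with that of $\sum_{i=2}^nE_{ii}$ on $V_{\mathbf t(s)}$ gives $\dot\pi(E_{11})=s_\kk-|\mathbf t(s)|=(m+\ell)-s$ on $V_{\mathbf t(s)}$. Since $\dot\pi(E_{11})$ preserves $V_{\mathbf t(s)}$, the first two terms of $\tilde D$ act on $V_{\mathbf t(s)}$ exactly as the first two terms of the asserted formula, namely $-\tfrac14(1+r^2)^2\tilde h_s''$ and $-\tfrac{1+r^2}{4r}\bigl(2n-1+r^2-2(m+\ell-s)r^2\bigr)\tilde h_s'$.

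For the two commutator terms I first record how the raising and lowering operators move between the summands $V_{\mathbf t(s)}$. From $[E_{11},E_{1j}]=E_{1j}$ and $[E_{11},E_{j1}]=-E_{j1}$ ($2\le j\le n$) one gets that $\dot\pi(E_{1j})$ raises the $\dot\pi(E_{11})$-eigenvalue by $1$ and $\dot\pi(E_{j1})$ lowers it by $1$; since this eigenvalue equals $(m+\ell)-s$ and takes the distinct values $m,\dots,m+\ell$ on the summands, it follows that $\dot\pi(E_{1j})$ maps $V_{\mathbf t(s)}$ into $V_{\mathbf t(s-1)}$ and $\dot\pi(E_{j1})$ maps $V_{\mathbf t(s)}$ into $V_{\mathbf t(s+1)}$ (the boundary maps vanishing). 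Feeding this into the term $\tfrac{4}{r^2}\sum_j[\dot\pi(E_{j1}),\tilde H]\dot\pi(E_{1j})$: for $v\in V_{\mathbf t(s)}$ we have $\dot\pi(E_{1j})v\in V_{\mathbf t(s-1)}$ and $\dot\pi(E_{j1})\dot\pi(E_{1j})v\in V_{\mathbf t(s)}$, so $[\dot\pi(E_{j1}),\tilde H]\dot\pi(E_{1j})v=(\tilde h_{s-1}-\tilde h_s)\dot\pi(E_{j1})\dot\pi(E_{1j})v$; the analogous computation for the fourth term produces $(\tilde h_{s+1}-\tilde h_s)\dot\pi(E_{1j})\dot\pi(E_{j1})v$. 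Thus these two terms act on $V_{\mathbf t(s)}$ by $(\tilde h_{s-1}-\tilde h_s)c_s$ and $(\tilde h_{s+1}-\tilde h_s)d_s$, where $c_s$ and $d_s$ are the scalars by which the $\U(n-1)$-invariant operators $\sum_{j=2}^n\dot\pi(E_{j1})\dot\pi(E_{1j})$ and $\sum_{j=2}^n\dot\pi(E_{1j})\dot\pi(E_{j1})$ act on $V_{\mathbf t(s)}$ (both are scalars by Schur's Lemma).

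It remains to compute $c_s$ and $d_s$, which is the one genuinely computational step. Two linear relations suffice. Summing $[E_{1j},E_{j1}]=E_{11}-E_{jj}$ over $2\le j\le n$ gives $d_s-c_s=(n-1)(m+\ell-s)-|\mathbf t(s)|$. For the sum, write the quadratic Casimir $\Omega_n=\sum_{i,j=1}^n E_{ij}E_{ji}$ of $\liegl(n,\CC)$ as $\Omega_n=\Omega_{n-1}+E_{11}^2+\sum_{j=2}^nE_{1j}E_{j1}+\sum_{j=2}^nE_{j1}E_{1j}$, with $\Omega_{n-1}=\sum_{i,j=2}^nE_{ij}E_{ji}$; evaluating on $V_{\mathbf t(s)}$ and using the eigenvalue $\sum_i\lambda_i(\lambda_i+N+1-2i)$ of the Casimir of $\liegl(N,\CC)$ on the irreducible module of highest weight $\lambda$ yields $c_s+d_s=\Omega_n(\kk)-\Omega_{n-1}(\mathbf t(s))-(m+\ell-s)^2$. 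Substituting $\kk=((m+\ell)^k,m^{n-k})$ and $\mathbf t(s)=((m+\ell)^{k-1},m+s,m^{n-k-1})$ and solving the two relations gives, after the expected cancellations, $c_s=s(\ell+k-s)$ and $d_s=(\ell-s)(n-k+s)$. Multiplying by the prefactors $-\tfrac14\cdot\tfrac{4}{r^2}=-\tfrac1{r^2}$ and $-\tfrac14\cdot\tfrac{4(1+r^2)}{r^2}=-\tfrac{1+r^2}{r^2}$ reproduces the last two terms of the asserted formula, while the vanishing of $c_0$ and $d_\ell$ matches the absence of $\tilde h_{-1}$ and $\tilde h_{\ell+1}$. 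The main obstacle is purely bookkeeping: checking that the two Casimir computations collapse to the clean factored scalars $s(\ell+k-s)$ and $(\ell-s)(n-k+s)$.
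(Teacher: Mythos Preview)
Your proof is correct and follows essentially the same strategy as the paper's: reduce the first two terms via the scalar action of $\dot\pi(E_{11})$ on $V_{\mathbf t(s)}$, use the raising/lowering behavior of $\dot\pi(E_{1j})$ and $\dot\pi(E_{j1})$ to turn the commutator terms into $(\tilde h_{s\mp1}-\tilde h_s)$ times scalars, and then compute those scalars via the quadratic Casimir identity $\Omega_n=\Omega_{n-1}+E_{11}^2+\sum_{j\ge2}(E_{1j}E_{j1}+E_{j1}E_{1j})$. The only cosmetic differences are that the paper reads off the $E_{11}$-eigenvalue from the highest weight $\mu_s$ rather than from the trace identity $E_{11}=\sum_{i=1}^nE_{ii}-\sum_{i=2}^nE_{ii}$, and establishes the inclusions $\dot\pi(E_{1j})V_{\mathbf t(s)}\subset V_{\mathbf t(s-1)}$ by inspecting weights rather than via the commutator $[E_{11},E_{1j}]=E_{1j}$; both routes are equivalent and yield the same values $c_s=s(\ell+k-s)$, $d_s=(\ell-s)(n-k+s)$.
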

\begin{proof}
To obtain the proposition from Theorem \ref{main'}  we need to
compute in each $V_{\mathbf t(s)}$, $0\le s\le\ell$, the linear
transformations
\begin{align*}
&\sum_{2\leq j\leq n}\big[ \dot\pi(E_{j1}),\tilde H(r)\big ] \dot\pi(E_{1j}), \qquad
\sum_{2\leq j\leq n} \big[ \dot\pi(E_{1j}), \tilde H(r) \big ]\dot\pi(E_{j1}).
\end{align*}

The highest weight of $V_{\mathbf t(s)}$ is
\begin{equation}\label{mus}
\begin{split}
\mu_s =&\;(m+\ell-s)x_1+(m+\ell)(x_2+\cdots +x_{k})+(m+s)x_{k+1}\\
&+m(x_{k+2}+\cdots + x_n).
\end{split}
\end{equation}
 Therefore all
weights of $V_{\mathbf t(s)}$ are of the form
$\mu=\mu_s-\sum_{r=2}^{n-1} n_r\alpha_r$ with
$\alpha_r=x_r-x_{r+1}$, thus $\mu= (m+\ell-s)x_1+ \cdots$. Now we
observe that for the representations $\pi$ considered here, for all
$j=2,\dots , n$, we have
\begin{equation}\label{E1j}
\dot\pi(E_{1j})(V_{\mathbf t(s)})\subset V_{\mathbf t(s-1)} \quad \text{and} \quad
\dot\pi(E_{j1})(V_{\mathbf t(s)})\subset V_{\mathbf t(s+1)}.
\end{equation}
In fact, if $v\in V_{\mathbf t(s)}$ is a vector of weight $\mu$, then
$\dot\pi(E_{1j})v$ is a vector of weight
$\mu+x_1-x_j=(m+\ell-(s-1))x_1+ \cdots$. Similarly
$\dot\pi(E_{j1})v$ is a vector of weight
$\mu+x_j-x_1=(m+\ell-(s+1))x_1+ \cdots$.

\noindent Therefore
\begin{equation*}
\begin{split}
\sum_{2\leq j\leq n}
\big [\dot\pi(E_{j1}),\tilde H(r)\big] \dot\pi(E_{1j})v&= \sum_{2\leq j\leq n}
\big( \dot\pi(E_{j1})\tilde H(r)- \tilde H(r) \dot\pi(E_{j1})\big) \dot\pi(E_{1j})\\
& =\big(\tilde h_{s-1}(r)-\tilde h_s(r)\big)\sum_{2\leq
j\leq n} \dot\pi(E_{j1})\dot\pi(E_{1j})v
\end{split}
\end{equation*}
 and
\begin{equation*}
\sum_{2\leq j\leq n}
\dot\pi(E_{1j})\tilde H(r)\dot\pi(E_{j1})v=\big( \tilde h_{s+1}(r)- \tilde h_s(r)\big) \sum_{2\leq
j\leq n} \dot\pi(E_{1j})\dot\pi(E_{j1})v.
\end{equation*}

The Casimir element of
${\mathrm GL}(n,\CC)$ is
$$\Delta^{(n)}= \sum_{1\leq i,j \leq n} E_{ij}E_{ji}= \sum_{1\leq i\leq
n} E_{ii}^2+ \sum_{1\leq i<j\leq n} (E_{ii}-E_{jj})+ 2\sum_{1\leq
i<j\leq n} E_{ji}E_{ij}.$$
Similarly, the Casimir operator of
${\mathrm GL}(n-1,\CC)\subset {\mathrm GL}(n,\CC)$ is
$$\Delta^{(n-1)}=\sum_{2\leq i,j \leq n} E_{ij}E_{ji}=  \sum_{2\leq i\leq
n} E_{ii}^2+ \sum_{2\leq i<j\leq n} (E_{ii}-E_{jj})+ 2\sum_{2\leq
i<j\leq n} E_{ji}E_{ij}.$$

\noindent Hence
\begin{equation}\label{operador}
\sum_{2\leq j\leq n} E_{j1}E_{1j}= \tfrac 12
\Bigl(\Delta^{(n)}-\Delta^{(n-1)} -E_{11}^2- \sum_{2\leq j\leq n}(E_{11}-E_{jj}) \Bigr).
\end{equation}
We also have
\begin{equation}\label{operador2}
\sum_{2\leq j\leq n}E_{1j}E_{j1}=\sum_{2\leq j\leq n}E_{j1}E_{1j}+\sum_{2\leq j\leq n}(E_{11}-E_{jj}).
\end{equation}

To compute the scalar linear transformation  $\sum_{2\leq j\leq n}
\dot\pi(E_{j1})\dot\pi(E_{1j})$ on $V_{\mathbf t(s)}$ it is enough
to apply it to a highest weight vector $v_s$ of $V_{\mathbf t(s)}$.
The highest weight of $V_{\kk}$ is $\mu_0$ and the weight of
$v_{s}$ is $\mu_s$, see \eqref{mus}.
 Then we have
\begin{equation}  \label{pipuntoE11}
\begin{split}
  \dot\pi(\Delta^{(n)})v_s=& \bigl(k(m+\ell)^2+(n-k)m^2+k(n-k)\ell \bigr)v_s,  \\
  \dot\pi(\Delta^{(n-1)})v_s=& \bigl((k-1)(m+\ell)^2+ (m+s)^2+(n-k-1)m^2 \\
 & +(\ell-s)(k-1)+s(n-k-1)+\ell(k-1)(n-k-1) \bigr) v_s,\\
\dot\pi(E_{11})v=&\; (m+\ell-s)v_s, \\
\sum_{2\leq j\leq n}\dot\pi(E_{jj}) v_s=& (mn+\ell k-\ell-m+s)v_s.
\end{split}
\end{equation}

\noindent Therefore, by using \eqref{operador} and \eqref{operador2}, we
obtain for all $v\in V_{\mathbf t(s)}$
\begin{equation}\label{piE1jEj1}
\begin{split}
 \sum_{2\leq j\leq n} \dot\pi(E_{j1})\dot\pi(E_{1j})v& =
\dot\pi\Big( \sum_{2\leq j\leq n} E_{j1}E_{1j} \Big )v=s(\ell+k-s)v,\\
 \sum_{2\leq j\leq n} \dot\pi(E_{1j})\dot\pi(E_{j1})v&=(\ell-s)(n-k+s)v.
\end{split}
\end{equation}

Finally we compute, for all $v\in V_{\mathbf t(s)}$
\begin{align}
 \sum_{2\leq j\leq n}
\big [\dot\pi(E_{j1}),\tilde H(r)\big] \dot\pi(E_{1j})v& = s(\ell+k-s) \big(\tilde h_{s-1}(r)-\tilde h_s(r)\big) v, \label{pipuntoEj1} \\
 \sum_{2\leq j\leq n}
\big [\dot\pi(E_{1j}),\tilde H(r)\big] \dot\pi(E_{j1})v& = (\ell-s)(n-k+s) \big(\tilde h_{s+1}(r)-\tilde h_s(r)\big) v, \label{pipuntoE1j}
\end{align}
and now the proposition follows easily from Theorem \ref{main'}.
\qed
\end{proof}

\

To obtain a similar result for the operator $E$ from Theorem \ref{main''},  we need to
compute  the linear transformations
$$\sum_{2\leq j,k\leq n}\dot\pi(E_{k1})\dot\pi(E_{1j})\dot\pi(E_{jk}) \quad \text { and }\quad
\sum_{2\leq j,k\leq n}\dot\pi(E_{1j})\dot\pi(E_{k1})\dot\pi(E_{jk})$$
in each $V_{\mathbf t(s)}$, $0\le s\le\ell$. This is the content of the following lemma.

\begin{lem}\label{F7-F8}
Let us consider the following elements of the universal enveloping algebra of $\liegl(n,\CC)$:
$$F=\sum_{2\leq j,k\leq n}E_{k1}E_{1j}E_{jk}\qquad \text{and} \qquad  F'=\sum_{2\leq j,k\leq
n}E_{1j}E_{k1}E_{jk}.$$
Then they are $\mathrm U(n-1)$-invariant and for $0\le s\le\ell$, we have
\begin{align*}
\dot\pi(F)_{\vert_{V_{\mathbf t(s)}}}& =-s(\ell + k - s)(k - s - m - n+1)I_s,\\
\dot\pi(F')_{\vert_{V_{\mathbf t(s)}}}& =-(\ell-s)(n-k+s)(k-s-m-1)I_s,
\end{align*}
where $I_s$ stands for the identity linear transformation of $V_{\mathbf t(s)}$.
\end{lem}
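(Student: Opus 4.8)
The plan is to first establish the $\mathrm U(n-1)$-invariance of $F$ and $F'$ exactly as in Lemma \ref{Kinvariantes}: both elements have weight zero with respect to the diagonal Cartan subalgebra of $\liegl(n,\CC)$, so it suffices to check that they commute with the simple root vectors $E_{r,r+1}$, $2\le r\le n-1$, which generate the strictly upper triangular part of $\liegl(n-1,\CC)$; a weight-zero highest weight vector for the adjoint action generates a trivial $\liegl(n-1,\CC)$-submodule, whence $F$ and $F'$ commute with all of $\liegl(n-1,\CC)$. The verifications $[E_{r,r+1},F]=0=[E_{r,r+1},F']$ are routine computations with $[E_{ab},E_{cd}]=\delta_{bc}E_{ad}-\delta_{da}E_{cb}$, identical in spirit to the one in Lemma \ref{Kinvariantes}. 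Once invariance is known, Schur's Lemma and the irreducibility of each $V_{\mathbf t(s)}$ as a $\mathrm U(n-1)$-module force $\dot\pi(F)$ and $\dot\pi(F')$ to be scalar multiples of $I_s$ on $V_{\mathbf t(s)}$, and only the two scalars remain.

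Next I would reduce the problem to a single computation by recording the operator identity
\[
F-F'=\sum_{2\le j,k\le n}[E_{k1},E_{1j}]E_{jk}=\Delta^{(n-1)}-E_{11}\sum_{2\le j\le n}E_{jj},
\]
which follows at once from $[E_{k1},E_{1j}]=E_{kj}-\delta_{jk}E_{11}$ and $\sum_{j,k}E_{kj}E_{jk}=\Delta^{(n-1)}$. The scalar by which the right-hand side acts on $V_{\mathbf t(s)}$ is already available from \eqref{pipuntoE11}, so it is enough to determine $\dot\pi(F)$; then $\dot\pi(F')=\dot\pi(F)-\dot\pi(\Delta^{(n-1)})+\dot\pi\big(E_{11}\sum_{2\le j\le n}E_{jj}\big)$.

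The key step, and the place where a genuinely new ingredient is needed, is fixing the absolute value of $\dot\pi(F)$. Merely reordering $F$ by the commutation relations is circular: the three cyclic rearrangements $\sum E_{k1}E_{1j}E_{jk}$, $\sum E_{jk}E_{k1}E_{1j}$ and $\sum E_{1j}E_{jk}E_{k1}$ differ only by the lower-order operators $\sum E_{j1}E_{1j}$, $\sum E_{1j}E_{j1}$, $\Delta^{(n-1)}$ and $E_{11}\sum_j E_{jj}$, whose scalars from \eqref{piE1jEj1} and \eqref{pipuntoE11} cancel around the loop, returning the tautology $F=F$ on $V_{\mathbf t(s)}$. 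To break the loop I would introduce the cubic Casimir elements $C^{(n)}=\sum_{1\le a,b,c\le n}E_{ab}E_{bc}E_{ca}$ and $C^{(n-1)}=\sum_{2\le a,b,c\le n}E_{ab}E_{bc}E_{ca}$, which are central in $\mathrm U(\liegl(n,\CC))$ and $\mathrm U(\liegl(n-1,\CC))$ and therefore act as known scalars on $V_\kk$ and on each $V_{\mathbf t(s)}$ (computed by evaluating on the highest weight vectors $v_0$ and $v_s$, or from the standard Casimir eigenvalue formula). Sorting the terms of $C^{(n)}-C^{(n-1)}$ by how many of $a,b,c$ equal $1$, the three terms with exactly one index equal to $1$ are $F$, $\sum E_{jk}E_{k1}E_{1j}$ and $\sum E_{1j}E_{jk}E_{k1}$; the commutation relations show that on $V_{\mathbf t(s)}$ the first two coincide with $F$ while the third equals $F$ plus a combination of the quadratic and Cartan scalars of \eqref{piE1jEj1} and \eqref{pipuntoE11}, so these contribute $3\,\dot\pi(F)$ plus known quantities. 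The term $E_{11}^3$ and the three terms with two indices equal to $1$ reduce directly, using \eqref{E1j}, \eqref{pipuntoE11} and \eqref{piE1jEj1}, to known scalars on $v_s$. The resulting linear equation then yields $\dot\pi(F)$, and the factors $s(\ell+k-s)$ and $(k-s-m-n+1)$ emerge after collecting terms; $\dot\pi(F')$ follows from the displayed difference. The main obstacle is thus not conceptual but the careful bookkeeping of the index-coincidence expansion of $C^{(n)}-C^{(n-1)}$ together with the routine but lengthy evaluation of the two cubic Casimir eigenvalues on highest weight vectors.
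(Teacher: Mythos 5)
Your proposal is correct and takes essentially the same route as the paper: your cubic central elements $C^{(n)},C^{(n-1)}$ are exactly the paper's $F^{(n)},F^{(n-1)}$, your identity $F-F'=\Delta^{(n-1)}-E_{11}\sum_{2\le j\le n}E_{jj}$ is precisely \eqref{F2}, and sorting $C^{(n)}-C^{(n-1)}$ by the number of indices equal to $1$ reproduces the paper's reduction to $3F$ in \eqref{F1}--\eqref{F3}. The paper likewise evaluates the two cubic Casimirs on the highest weight vectors $v_0$ and $v_s$ by working modulo the left ideals generated by the raising operators (its congruence computations \eqref{F4}--\eqref{F5}), so your plan matches its proof in all essentials.
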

\begin{proof}
   It is easy to check that the following elements
$$F^{(n)}= \sum_{1\leq i,j,k \leq n} E_{ki}E_{ij}E_{jk} \quad \text{ and } \quad F^{(n-1)}= \sum_{2\leq i,j,k \leq n} E_{ki}E_{ij}E_{jk}$$
are, respectively, in the centers of the universal enveloping algebras of $\liegl(n,\CC)$ and $\liegl(n-1,\CC)$.
We also have
\begin{equation*}\label{F0}
F^{(n)}-F^{(n-1)}=\sum_{1\leq j,k \leq n}
E_{k1}E_{1j}E_{jk}+\sum_{\substack{2\leq i \leq n \\ 1\leq k\leq n}}
E_{ki}E_{i1}E_{1k}+\sum_{2\leq i,j \leq n} E_{1i}E_{ij}E_{j1}.
\end{equation*}

\noindent Moreover
\begin{align*}
\sum_{1\leq j,k \leq n} E_{k1}E_{1j}E_{jk}&=F+\sum_{1\leq k \leq n}
E_{k1}E_{11}E_{1k}+\sum_{2\leq j \leq n} E_{11}E_{1j}E_{j1}\\
=F+E_{11}^3&+(2E_{11}+1)\sum_{2\leq k \leq n} E_{k1}E_{1k}+E_{11}\sum_{2\leq k \leq n}(E_{11}-E_{jj}),\displaybreak[0]\\
\sum_{\substack {2\leq i \leq n\\ 1\leq k\leq n}}
E_{ki}E_{i1}E_{1k}&=\sum_{2\leq i,k\leq n}
E_{ki}E_{i1}E_{1k}+\sum_{2\leq i\leq n} E_{1i}E_{i1}E_{11}\\
&=F+E_{11}\sum_{2\leq i\leq n} (E_{11}-E_{ii})+E_{11}\sum_{2\leq i\leq n} E_{i1}E_{1i},\displaybreak[0]\\
\sum_{2\leq i,j \leq n} E_{1i}E_{ij}E_{j1}&=(n-1)\sum_{2\leq i\leq n} E_{1i}E_{i1}+F'\\
&=(n-1)\sum_{2\leq i\leq n}(E_{11-}E_{ii})+(n-1)\sum_{2\leq i\leq n} E_{i1}E_{1i}+F'.
\end{align*}

\noindent Therefore
\begin{equation}\label{F1}
\begin{split}
F^{(n)}-F^{(n-1)}=&\;2F+F'+(3E_{11}+n)\sum_{2\leq j\leq n}
E_{j1}E_{1j}+E_{11}(E_{11}+n-1)^2\\
&-(2E_{11}+n-1)\sum_{2\leq j\leq n}E_{jj}.
\end{split}
\end{equation}

\noindent We also observe that
\begin{equation}\label{F2}
\begin{split}
F'&=\sum_{2\leq j,k\leq n}E_{1j}E_{k1}E_{jk}
= \sum_{2\leq j,k\leq  n}E_{k1}E_{1j}E_{jk}+ \sum_{2\leq j,k\leq n}(\delta_{jk} E_{11}-E_{kj})E_{jk}\\
&=F- \Delta^{(n-1)}+E_{11}\sum_{2\leq j\leq n}E_{jj}.
\end{split}
\end{equation}

\noindent From \eqref{F1} and \eqref{F2} we obtain
\begin{equation}\label{F3}
\begin{split}
3F=&\;F^{(n)}-F^{(n-1)}+\Delta^{(n-1)}-(3E_{11}+n)\sum_{2\leq j\leq
n}
E_{j1}E_{1j}\\
&-E_{11}(E_{11}+n-1)^2 +(E_{11}+n-1)\sum_{2\leq j\leq n}E_{jj}.
\end{split}
\end{equation}

To compute $\dot\pi(F)$ on the $M$-module $V_{\mathbf t(s)}$ it is
enough to know $\dot\pi(F)v_s$, since $F$ is $M$-invariant. Thus we
only need to determine $\dot\pi(F^{(n)})v_s$ and
$\dot\pi(F^{(n-1)})v_s$ because the other terms have been computed
before in \eqref{pipuntoE11} and \eqref{piE1jEj1} .

Since $F^{(n)}$ is $K$-invariant it is enough to compute
$\dot\pi(F^{(n)})v_0$. But $v_0$ is a highest weight vector of
$\liegl(n,\CC)$ in $V_\kk$; hence it is enough to compute $F^{(n)}$
modulo the left ideal $U(\liegl(n,\CC))\liek_+$ of the universal
enveloping algebra of $\liegl(n,\CC)$ generated by the set
$\{E_{ij}:1\le i<j\le n\}$. Similarly, since $v_s$ is a highest
weight vector of $\liegl(n-1,\CC)$ in $V_{\mathbf t(s)}$ it is enough
to compute $F^{(n-1)}$ modulo the left ideal
$U(\liegl(n-1,\CC))\liem_+$ of $U(\liegl(n-1,\CC))$  generated by
the set $\{E_{ij}:2\le i<j\le n\}$.

We start with $F^{(n)}=\sum_{1\leq i,j,k \leq n}E_{ki}E_{ij}E_{jk}$.
To rewrite $E_{ki}E_{ij}E_{jk}$ we partition the set of indices into
the following subsets, and we use the symbol $\equiv$ to denote
congruence modulo the left ideal $U(\liegl(n,\CC))\liek_+$.
\begin{equation}\label{partition}
\begin{split}
j<k:&\; E_{ki}E_{ij}E_{jk}\equiv0,\\
j=k>i:&\;  E_{ki}E_{ik}E_{kk}=E_{ki}(E_{ik}+E_{kk}E_{ik})\equiv0,\\
j=k=i:&\;  E_{kk}E_{kk}E_{kk}=E_{kk}^3,\\
j=k<i:&\;  E_{ki}E_{ik}E_{kk}=(E_{kk}-E_{ii}+E_{ik}E_{ki})E_{kk}\\
           &\; =(E_{kk}-E_{ii})E_{kk}+E_{ik}(-E_{ki}+E_{kk}E_{ki})\equiv(E_{kk}-E_{ii})E_{kk},\\
i>j>k:&\;
E_{ki}E_{ij}E_{jk}=(E_{kj}+E_{ij}E_{ki})E_{jk}=E_{kk}-E_{jj}+E_{jk}E_{kj}\\
&\; +E_{ij}(-E_{ji}+E_{jk}E_{ki})\equiv E_{kk}-E_{jj},\\
i=j>k:&\;
E_{kj}E_{jj}E_{jk}=(E_{kj}+E_{jj}E_{kj})E_{jk}=E_{kk}-E_{jj}+E_{jk}E_{kj}\\
&\;
+E_{jj}(E_{kk}-E_{jj}+E_{jk}E_{kj})\equiv(E_{kk}-E_{jj})(E_{jj}+1),
\end{split}
\end{equation}
\begin{equation*}
\begin{split}
j>i>k:&\; E_{ki}E_{ij}E_{jk}=E_{ki}(E_{ik}+E_{jk}E_{ij})\equiv
E_{kk}-E_{ii}+E_{ik}E_{ki}\\
&\; \equiv E_{kk}-E_{ii},\\
j>i=k:&\;
E_{kk}E_{kj}E_{jk}=E_{kk}(E_{kk}-E_{jj}+E_{jk}E_{kj})\equiv
E_{kk}(E_{kk}-E_{jj}),\\
j>k>i:&\; E_{ki}E_{ij}E_{jk}=E_{ki}(E_{ik}+E_{jk}E_{ij})\equiv0.
\end{split}
\end{equation*}

Therefore
\begin{equation}\label{F4}
\begin{split}
F^{(n)}\equiv &\;\sum_{1\le k\le n}E_{kk}^3+\sum_{1\le k<j\le
n}2(E_{kk}-E_{jj})E_{kk}\\
& +\sum_{1\le k<j\le n}2(n-j)(E_{kk}-E_{jj})
+\sum_{1\le k<j\le n}(E_{kk}-E_{jj})(E_{jj}+1)\\
=& \sum_{1\le k\le n}E_{kk}^3+\sum_{1\le k<j\le
n}(E_{kk}-E_{jj})(2E_{kk}+E_{jj}+2(n-j)+1).
\end{split}
\end{equation}

In a similar way for $F^{(n-1)}=\sum_{2\leq i,j,k \leq
n}E_{ki}E_{ij}E_{jk}\in U(\liegl(n-1,\CC))$, it is enough to work
modulo the left ideal $U(\liegl(n-1,\CC))\liem_+$. Then we obtain
\begin{equation}\label{F5}
F^{(n-1)}\equiv\sum_{2\le k\le n}E_{kk}^3+\sum_{2\le k<j\le
n}(E_{kk}-E_{jj})(2E_{kk}+E_{jj}+2(n-j)+1).
\end{equation}

The highest weight of $V_\kk$ is $\mu_0
=(m+\ell)(x_1+\cdots+x_{k})+m(x_{k+1}+\cdots + x_n)$. Hence, from
\eqref{F4} it is easy to conclude that
\begin{equation}\label{F6}
\dot\pi(F^{(n)})=\left(k(m+\ell)^3+(n-k)m^3+k\ell(n-k)(2(m+\ell)+m+n-k)\right)I,
\end{equation}
where $I$ stands for the identity linear transformation of $V_\kk$.

The highest weight of $V_{\mathbf t(s)}$ is $$\mu_s
=(m+\ell-s)x_1+(m+\ell)(x_2+\cdots +x_{k})+(m+s)x_{k+1}
+m(x_{k+2}+\cdots + x_n).$$
Hence, from \eqref{F5} it is easy to
conclude that
\begin{equation*}\label{F66}
\begin{split}
\dot\pi(F^{(n-1)})=&\;
\big((k-1)(m+\ell)^3+(m+s)^3+(n-k-1)m^3\\
&\quad +(k-1)(\ell-s)(2(m+\ell)+m+s+2(n-k)-1)\\
&\quad +(k-1)\ell(n-k-1)(2(m+\ell)+m+n-k-1)\\
&\quad +s(n-k-1)(2(m+s)+m+n-k-1)\big)I_s.
\end{split}
\end{equation*}

By taking into account the calculations made in \eqref{pipuntoE11} and \eqref{piE1jEj1} and replacing them in  \eqref{F2} and \eqref{F3} we
complete the proof of the lemma. \qed
\end{proof}

\begin{prop}\label{matrixE} For all $r>0$ and $0\le s\le\ell$ we have
\begin{align*}
(\tilde E\tilde H )_s (r)=& - \frac{(1+r^2)^2}{4}(m+\ell-s) \tilde h_s''(r) \\
&-\frac{(1+r^2)}{4r} (m+\ell-s)\big(2n-1+r^2-2r^2(m+\ell-s)\big)\tilde h_s'(r)\displaybreak[0]\\
&-\frac{(1+r^2)}{2r}s(\ell+k-s) \tilde h'_{s-1}(r) +\frac{(1+r^2)^2}{2r}(\ell-s)(n-k+s) \tilde h'_{s+1}(r) \displaybreak[0]\\
& +\frac{(1+r^2)}{r^2}  s(\ell+k-s)(k-s-m-n+1)\big(\tilde h_{s-1}(r)-\tilde h_s(r)\big) \displaybreak[0] \\
&+\frac{(1+r^2)}{r^2} (\ell-s)(n-k+s)(k-s-m-1)\big(\tilde h_{s+1}(r)-\tilde h_s(r)\big) \displaybreak[0]  \\
&+s(\ell+k-s)(2m+n+\ell-k)\big(\tilde h_{s-1}(r)-\tilde h_s(r)\big) .
\end{align*}
\end{prop}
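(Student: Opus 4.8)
The strategy is to specialize Theorem~\ref{main''} to the one-step $K$-type \eqref{onestep1} and evaluate each term on the irreducible $\U(n-1)$-submodule $V_{\mathbf t(s)}$, exactly paralleling the computation in Proposition~\ref{matrixD}. Recall that $\tilde H(r)$ acts as the scalar $\tilde h_s(r)$ on $V_{\mathbf t(s)}$, so every bracket $[\dot\pi(E_{ab}),\tilde H(r)]$ collapses to a \emph{difference} of the relevant scalars $\tilde h_{s'}(r)$, using the shift relations \eqref{E1j}: $\dot\pi(E_{1j})$ sends $V_{\mathbf t(s)}$ to $V_{\mathbf t(s-1)}$ and $\dot\pi(E_{j1})$ sends it to $V_{\mathbf t(s+1)}$. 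The six summands of $\tilde E\tilde H$ in Theorem~\ref{main''} will be handled one at a time, and the main work is to identify, for each, the correct scalar coefficient on $V_{\mathbf t(s)}$.

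The first two terms involve $\dot\pi(E_{11})$ applied after the derivatives of $\tilde H$; since $E_{11}$ acts by the scalar $m+\ell-s$ on $V_{\mathbf t(s)}$ by \eqref{pipuntoE11}, these produce the $(m+\ell-s)$-weighted second- and first-derivative terms directly. The two sums $\sum_{j}\dot\pi(E_{j1})\frac{d\tilde H}{dr}\dot\pi(E_{1j})$ and $\sum_{j}\dot\pi(E_{1j})\frac{d\tilde H}{dr}\dot\pi(E_{j1})$ are handled by the same device used for \eqref{pipuntoEj1}--\eqref{pipuntoE1j}: because $\frac{d\tilde H}{dr}$ is scalar on each submodule, these reduce to $s(\ell+k-s)\,\tilde h'_{s-1}$ and $(\ell-s)(n-k+s)\,\tilde h'_{s+1}$ respectively, via the quadratic Casimir computation \eqref{piE1jEj1}. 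The double-commutator term $\sum_{j,k}\big(\big[\dot\pi(E_{k1}),[\dot\pi(E_{1j}),\tilde H]\big]+\big[\dot\pi(E_{1j}),[\dot\pi(E_{k1}),\tilde H]\big]\big)\dot\pi(E_{jk})$ again yields differences $\tilde h_{s-1}-\tilde h_s$ and $\tilde h_{s+1}-\tilde h_s$ with coefficients coming from \eqref{piE1jEj1}, contributing two of the difference terms in the statement.

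The hardest summand is the last, $\sum_{k=1}^n\sum_{j=2}^n[\dot\pi(E_{j1}),\tilde H]\dot\pi(E_{1k})\dot\pi(E_{kj})$, because the cubic product of root vectors does not reduce to a bare Casimir. This is precisely where Lemma~\ref{F7-F8} enters: after writing out the commutator as the difference $\tilde h_{s-1}(r)-\tilde h_s(r)$ times a triple product, one recognizes the operators $F$ and $F'$ (and the companion terms involving $E_{11}$ and $\sum_{j}E_{jj}$), whose eigenvalues on $V_{\mathbf t(s)}$ are supplied explicitly by Lemma~\ref{F7-F8}. Separating the $k=1$ and $k\ge 2$ contributions in the index $k$ accounts for the split between the $\frac{(1+r^2)}{r^2}$-weighted difference term and the bare $s(\ell+k-s)(2m+n+\ell-k)$ difference term in the final line. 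I expect the bookkeeping in this last step—correctly pairing each piece of the reduction \eqref{F3} of $F$ with its eigenvalue and assembling the coefficients $(k-s-m-n+1)$ and $(2m+n+\ell-k)$—to be the main obstacle; everything else is a direct transcription of Theorem~\ref{main''} using the scalar reductions already established. Substituting all six evaluated terms back into Theorem~\ref{main''} and collecting like derivatives of $\tilde h_{s-1},\tilde h_s,\tilde h_{s+1}$ yields the stated formula.
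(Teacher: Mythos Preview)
Your overall strategy matches the paper's proof exactly: specialize Theorem~\ref{main''} term by term on $V_{\mathbf t(s)}$, using the scalar reduction of $\tilde H$, the shift rules \eqref{E1j}, the eigenvalue of $E_{11}$, the quadratic identities \eqref{piE1jEj1}, and Lemma~\ref{F7-F8}. However, you have swapped the roles of two of the ingredients.

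The double-commutator term is \emph{not} handled by \eqref{piE1jEj1} alone. Unwinding the two nested commutators on $V_{\mathbf t(s)}$ gives, as in the paper,
\[
\sum_{j,k\ge 2}\Big(\big[\dot\pi(E_{k1}),[\dot\pi(E_{1j}),\tilde H]\big]+\big[\dot\pi(E_{1j}),[\dot\pi(E_{k1}),\tilde H]\big]\Big)\dot\pi(E_{jk})
=2(\tilde h_s-\tilde h_{s-1})\dot\pi(F)+2(\tilde h_s-\tilde h_{s+1})\dot\pi(F'),
\]
so the extra factors $(k-s-m-n+1)$ and $(k-s-m-1)$ in the statement come precisely from the eigenvalues of $F$ and $F'$ in Lemma~\ref{F7-F8}; the quadratic Casimir identity \eqref{piE1jEj1} only supplies the $s(\ell+k-s)$ and $(\ell-s)(n-k+s)$ parts. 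Conversely, in the last summand $\sum_{k=1}^n\sum_{j\ge 2}[\dot\pi(E_{j1}),\tilde H]\dot\pi(E_{1k})\dot\pi(E_{kj})$, after splitting off $k=1$ (which contributes $(m+\ell-s+1)$ times the quantity in \eqref{pipuntoEj1}), the remaining $k\ge 2$ piece equals $(\tilde h_{s-1}-\tilde h_s)\dot\pi(F)$ only; $F'$ does not appear there, and no ``companion terms involving $\sum_j E_{jj}$'' are needed. Once you reassign Lemma~\ref{F7-F8} to the double-commutator term (both $F$ and $F'$) and to the $k\ge 2$ part of the last term (only $F$), the bookkeeping goes through exactly as in the paper.
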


\begin{proof}
We need to compute in each $M$-module $V_{\mathbf t(s)}$, $0\le s\le\ell$, the linear
transformations appearing in the differential operator $\tilde E$ in Theorem \ref{main''}.

It is important to recall, see \eqref{E1j}, that
$$\dot\pi(E_{1j})(V_{\mathbf t(s)})\subset V_{\mathbf t(s-1)} \quad \text{and} \quad
\dot\pi(E_{j1})(V_{\mathbf t(s)})\subset V_{\mathbf t(s+1)},$$
for $2\le j\le n$.

\noindent From the calculations made in the proof of Proposition \ref{matrixD} we have
\begin{align*}
\dot\pi(E_{11})v_s & =(m+\ell-s)v_s, \displaybreak[0] \\
\sum_{j=2}^n \dot\pi(E_{j1})\frac{d\tilde H}{dr}\dot\pi(E_{1j})v_s & =  \tilde h'_{s-1}\sum_{j=2}^n\dot\pi(E_{j1})\dot\pi(E_{1j})v_s=\tilde h'_{s-1} \, s(\ell+k-s)v_s,\displaybreak[0] \\
\sum_{j=2}^n \dot\pi(E_{1j})\frac{d\tilde H}{dr}\dot\pi(E_{j1})v_s & =  \tilde h'_{s+1}\sum_{j=2}^n\dot\pi(E_{1j})\dot\pi(E_{j1})v_s=\tilde h'_{s+1} \,(\ell-s)(n-k+s)v_s.
\end{align*}

\noindent It is easy to verify that
\begin{align*}
\sum_{j,k=2}^n  \big[\dot\pi  (E_{k1}), \big[ \dot\pi(E_{1j}), \tilde H\big]\big]\dot\pi(E_{jk}) &=
\big(\tilde h_{s}-\tilde h_{s-1}\big) \dot\pi(F)+\big(\tilde h_{s}-\tilde h_{s+1}\big) \dot\pi(F'),\\
\sum_{j,k=2}^n\big[\dot\pi(E_{1j}),\big[\dot\pi(E_{k1}),\tilde H\big]\big]\dot\pi(E_{jk}) & =\big(\tilde h_{s}-\tilde h_{s-1}\big) \dot\pi(F)+\big(\tilde h_{s}-\tilde h_{s+1}\big) \dot\pi(F').
\end{align*}

\noindent  Then  by using Lemma \ref{F7-F8} we get
\begin{align*}
\sum_{j,k=2}^n \Big(\big[\dot\pi & (E_{k1}),\big[ \dot\pi(E_{1j}), \tilde H\big]\big] + \big[\dot\pi(E_{1j}),\big[ \dot\pi(E_{k1}), \tilde H\big]\big] \Big) \dot\pi(E_{jk})\,  v_s \displaybreak[0]\\
&= 2\Big(\big(\tilde h_{s-1}-\tilde h_s\big)s(\ell+k-s)(k-s-m-n+1)\\
&\qquad \quad +\big(\tilde h_{s+1}-\tilde h_s\big)(\ell-s)(n-k+s)(k-s-m-1)\Big)v_s.
\end{align*}

\noindent On the other hand, we have
\begin{align*}
\sum_{k=1}^n \sum_{j=2}^n & \big[\dot\pi(E_{j1}) ,\tilde H \big]\dot\pi(E_{1k})\dot\pi(E_{kj})v_s\\ & = \sum_{j=2}^n \big[\dot\pi(E_{j1}),\tilde H \big]\dot\pi(E_{11})\dot\pi(E_{1j})v_s+
\sum_{j,k=2}^n \big[\dot\pi(E_{j1}),\tilde H \big]\dot\pi(E_{1k})\dot\pi(E_{kj})v_s\\
& = (m+\ell-s+1)\sum_{j=2}^n \big[\dot\pi(E_{j1}),\tilde H \big]\dot\pi(E_{1j})v_s + \big(\tilde h_{s-1}-\tilde h_s\big) \dot\pi(F)v_s,
\intertext{by using \eqref{pipuntoEj1} and Lemma \ref{F7-F8} we obtain}
&= \big(\tilde h_{s-1}-\tilde h_s\big) s (\ell+k-s) ( 2m+\ell+n-k).
\end{align*}

\noindent Now the proposition follows easily from Theorem \ref{main''}. \qed
\end{proof}

\section{Hypergeometrization}\label{Hipergeometrization}

\noindent Let us introduce the change of variables $t=(1+r^2)^{-1}$. Let $H(t)=\tilde H(r)$
and correspondingly put $h_s(t)=\tilde h_s(r)$.  Then the differential operator
of Proposition \ref{matrixD} becomes
\begin{equation*}
\begin{split}
(DH)_s(t) &=\;-\Big (t(1-t) h_{s}''(t) +\bigl(m+\ell-s+1-t(n+m+\ell-s+1)\bigr) h_{s}'(t) \\
&\qquad \quad +\frac 1{1-t}\,(\ell-s)(n+s-k)\bigl(h_{s+1}(t)-h_{s}(t)\bigr)\\
&\qquad \quad +\frac t{1-t}\, s(\ell-s+k)\bigl(h_{s-1}(t)-h_{s}(t)\bigr)\Big),
\end{split}
\end{equation*}
for $t\in(0,1)$ and $s=0,\dots,\ell$.
In matrix notation we have

\begin{equation}\label{operadorD1}
DH(t)= -\Big(t(1-t)H''(t) + (A_0-t(A_0+n) )H'(t)+\frac 1{1-t} (B_0+tB_1)H(t)\Big),
\end{equation}
where
\begin{equation}\label{A0B0B1}
\begin{split}
A_0 &=\sum_{s=0}^\ell (m+\ell-s+1)\, E_{ss},
\\
B_0&=\sum_{s=0}^\ell (\ell-s)(n+s-k) \left(E_{s,s+1}-E_{ss}\right), \\
B_1&=\sum_{s=0}^\ell s(\ell-s+k) \left(E_{s,s-1}-E_{ss}\right).
\end{split}
\end{equation}

Similarly, the differential operator $E$ of Proposition \ref{matrixE} becomes
\begin{equation*}
\begin{split}
(EH)_s(t) =-\bigg(& t(1-t)(m+\ell-s) h_{s}''(t) \\
& +(m+\ell-s)\big(m+\ell-s+1-t(m+\ell-s+n+1)\big) h_{s}'(t) \\
&+(\ell-s)(n-k+s)h_{s+1}'(t)-s(\ell+k-s)\,t\,h_{s-1}'(t)\\
&+\frac 1{1-t}\,(\ell-s)(n-k+s)(m+s-k+1)\bigl(h_{s+1}(t)-h_{s}(t)\bigr)\\
&+\frac
1{1-t}\,s(\ell-s+k)(m+n+s-k-1)\bigl(h_{s-1}(t)-h_{s}(t)\bigr) \\
& - s(\ell-s+k)(2m+n+\ell-k)\bigl(h_{s-1}(t)-h_{s}(t)\bigr)\bigg),
\end{split}
\end{equation*}
for $t\in(0,1)$ and $s=0,\dots,\ell$. In matrix notation we have

\begin{equation}\label{operadorE1}
EH(t)= -\Big(t(1-t)MH''(t) + (C_0-tC_1 )H'(t)+\frac 1{1-t}
(D_0+tD_1)H(t)\Big),
\end{equation}
where
\begin{align*}
M =&\;\sum_{s=0}^\ell(m+\ell-s)\, E_{ss}, \displaybreak[0]\\
C_0 =&\;\sum_{s=0}^\ell (m+\ell-s)(m+\ell-s+1)\, E_{ss}+\sum_{s=0}^\ell (\ell-s)(n-k+s)\, E_{s,s+1}, \displaybreak[0]\\
C_1=&\;\sum_{s=0}^\ell (m+\ell-s)(m+\ell-s+n+1)\, E_{ss}+ \sum_{s=0}^\ell s(\ell+k-s)E_{s,s-1}, \displaybreak[0]\\
D_0=&\;\sum_{s=0}^\ell(\ell-s)(n-k+s)(m+s-k+1)\left(E_{s,s+1}-E_{ss}\right)\\
&-\sum_{s=0}^\ell s(\ell+k-s)(m+\ell-s+1)\left(E_{s,s-1}-E_{ss}\right),\displaybreak[0]\\
D_1=&\;\sum_{s=0}^\ell s(\ell-s+k)(2m+\ell+n-k)\left(E_{s,s-1}-E_{ss}\right).
\end{align*}

\

Let us recall that if $\Phi$ is an irreducible spherical function of one-step $K$-type ${\kk}$ as in
\eqref{onestep1}, then  the associated function $H(t)=(h_0(t),\dots,h_\ell(t))^t$, $0<t<1$
is an eigenfunction of the differential operators $D$ and $E$.

\begin{thm}\label{asintotica}
Let $H(t)=(h_0(t),\dots,h_\ell(t))^t$,  be the function associated to an irreducible spherical
function $\Phi$ of one-step $K$-type $\kk$. Then $H$ is a polynomial eigenfunction of the differential operators $D$ and $E$ and $H(1)=(1,\dots,1)^t$.\\
 Moreover if $ m+\ell+1\le s\le\ell$, the function $h_s$ is of the form
\begin{equation}\label{cero}
h_s(t)=t^{s-m-\ell}g_s(t),
\end{equation}
with $g_s$ polynomial and $g(0)\neq 0$.
\end{thm}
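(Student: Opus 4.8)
\emph{Sketch of proof.} The strategy is to pin down the local behaviour of $H$ at the two ends $t=1$ (i.e. $r=0$) and $t=0$ (i.e. $r=\infty$) of the orbit interval, and then to use that the eigenvalue equation $DH=\lambda H$ coming from \eqref{operadorD1} is Fuchsian with singular points only at $t=0,1,\infty$. Since $\Phi_\kk(a(\theta))=\pi(\operatorname{diag}(\cos\theta,1,\dots,1))$ and every weight of $V_{\mathbf t(s)}$ has first coordinate $m+\ell-s$, the operator $\Phi_\kk(a(\theta))$ acts on $V_{\mathbf t(s)}$ by the scalar $(\cos\theta)^{m+\ell-s}$. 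Writing $\phi_s(\theta)$ for the scalar by which the analytic function $\Phi(a(\theta))$ acts on $V_{\mathbf t(s)}$, and using $\cos^2\theta=t$, I record the basic identity
$$\tilde h_s(r)=\phi_s(\theta)\,(\cos\theta)^{s-m-\ell},\qquad r=\tan\theta .$$

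At $t=1$: the element $k_1=\operatorname{diag}(i,1,\dots,1,-i)$ lies in $K$ and satisfies $k_1a(\theta)k_1^{-1}=a(-\theta)$; since $\pi(k_1)$ preserves each $V_{\mathbf t(s)}$ while $\Phi(a(\theta))$ is scalar there, conjugating gives $\phi_s(-\theta)=\phi_s(\theta)$. Thus $\tilde h_s$ is even and analytic in $r$ near $r=0$, hence an analytic function of $r^2=(1-t)/t$, hence holomorphic in $t$ at $t=1$; and $h_s(1)=\tilde h_s(0)=\phi_s(0)=1$ because $\Phi(e)=I$, giving $H(1)=(1,\dots,1)^t$.

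At $t=0$: the indicial exponents of the scalar part of $D$, read off from \eqref{operadorD1}--\eqref{A0B0B1}, are the integers $0$ and $s-m-\ell$. On the other hand $\phi_s$ is analytic in $\theta$, so writing $u=\sqrt t=\cos\theta$ and $\theta=\tfrac\pi2-\arcsin u$ (an analytic function of $u$) the identity above becomes $\tilde h_s=u^{\,p+s-m-\ell}(c+O(u))$ with $c\neq0$ and no logarithmic term, where $p\ge0$ is the order of vanishing of $\phi_s$ at $\theta=\tfrac\pi2$. The leading $t$-exponent $(p+s-m-\ell)/2$ of $h_s$ must coincide with one of the two admissible integer exponents; since $p\ge0$ this forces $p=|s-m-\ell|$ and the exponent to equal $\max(0,s-m-\ell)$. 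Hence $h_s$ is holomorphic at $t=0$; for $s\le m+\ell$ it is nonvanishing there, while for $m+\ell+1\le s\le\ell$ it vanishes to order exactly $s-m-\ell$, which is \eqref{cero} with $g_s(0)=c\neq0$.

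Finally, $\Phi$ is analytic on $G$ and $A(a(\theta))$ is nonsingular for $0<\theta<\tfrac\pi2$, so each $h_s$ is holomorphic on $(0,1)$; by the two steps above it is holomorphic, with trivial local monodromy, at $t=0$ and $t=1$ as well. A direct inspection of \eqref{operadorD1} shows that $DH=\lambda H$ is Fuchsian on $\CC\cup\{\infty\}$ with singular points only at $0,1,\infty$; therefore $H$ continues to a single-valued holomorphic function on all of $\CC$, i.e. an entire solution, and the regular singularity at $\infty$ forces at most polynomial growth, so $H$ is a polynomial. The delicate point is the analysis at $t=0$: one must be certain that the elementary factorization of $\tilde h_s$ produces neither a half-integer power of $t$ nor a logarithm, and it is precisely the matching with the integral indicial exponents of $D$ that guarantees this and at the same time pins down the vanishing order in \eqref{cero}.
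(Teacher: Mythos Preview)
Your overall strategy is genuinely different from the paper's and is attractive because it is self-contained: the paper simply quotes the external result [PT4] that $\phi_s(a(\theta))=(\cos\theta)^{m+\ell-s}p_s(\sin^2\theta)$ with $p_s$ a polynomial, which gives polynomiality of $H$ at once; then it uses the group element $b(x)=\operatorname{diag}(x,1,\dots,1,x^{-1})$, which satisfies $b(x)a(\pi/2)=a(\pi/2)b(x^{-1})$, to force $\phi_s(a(\pi/2))=0$ for $s\ne m+\ell$; and finally it plugs the Taylor expansion of $H$ into the recursion \eqref{recursion} to read off the exact vanishing order. Your evenness argument at $t=1$ via $k_1=\operatorname{diag}(i,1,\dots,1,-i)$ is correct and is a nice replacement for the first of these steps.

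There is, however, a real gap at $t=0$. You claim that the leading $t$-exponent of the single component $h_s$ must be one of the ``scalar'' indicial roots $\{0,\,s-m-\ell\}$ read off from the $s$-th diagonal entry of $A_0$. But the system \eqref{operadorD1} is genuinely coupled: at $t=0$ the zero-order term $B_0$ links $h_s$ to $h_{s+1}$ at order $t^0$, so the leading exponent of an individual component of a solution need not be an indicial root of the corresponding diagonal block. What your $u$-expansion together with the integrality of all the system's indicial exponents does legitimately give (via monodromy) is that $H$ is a Laurent series in $t$ at $0$, with no half-integer powers and no logarithms. That alone, though, yields for $h_s$ only a leading $t$-exponent equal to some integer $\ge (s-m-\ell)/2$; it gives neither holomorphy of $h_s$ at $t=0$ when $s\le m+\ell$, nor the exact order $s-m-\ell$ when $s\ge m+\ell+1$. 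Consequently your Fuchsian endgame only shows $H$ is a Laurent polynomial, not a polynomial. To close the gap you must analyze how the coupling propagates orders of vanishing between neighboring components --- and that is exactly the recursion step the paper carries out via \eqref{recursion}, where the diagonal term $(j+1)(j+A_0)H_{j+1}$ forces $k(k+m+\ell-s)H_{s,k}=0$ once the lower-order contributions have been controlled.
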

\begin{proof}
On the open subset ${\A}$ of $G$ defined by the
condition $g_{n+1,n+1}\ne0$ we put $H(g)=\Phi(g)\Phi_\kk(g)^{-1}$.
For any $-\pi/2<\theta<\pi/2$ let us consider the elements
$$a(\theta)= \left(\begin{matrix} \cos\theta& 0& \sin\theta\\ 0&I_{n-1}&0\\ -\sin\theta& 0&\cos
\theta\end{matrix}\right)\in\A$$
and the left upper $n\times n$ corner $A(\theta)$ of $a(\theta)$.

Let $\pi$ denotes the irreducible finite-dimensional representation of $\U(n)$ of highest weight $\kk$.  Since  $A(\theta)$ commutes with $\U(n-1)$ then $\pi(A(\theta))$ is a scalar in each  $\U(n-1)$-submodule  $V_{{\bf t}(s)}$ of $V_\kk$,
see \eqref{onestep2}.
Thus a highest weight vector of $V_{{\bf t}(s)}$ as $\U(n-1)$-module is
a vector of the $\U(n)$-module $V_{\kk}$ of weight $\mu_s$, see \eqref{mus}.
Then $\Phi_\kk(a(\theta))=\pi(A(\theta))$ in each
$V_{{\bf t}(s)}$ is equal to $\cos(\theta)^{m+\ell-s}$ times the identity.
Also $\Phi(a(\theta))$ is a scalar $\phi_s(a(\theta))$ in each $V_{{\bf t}(s)}$.

The projection of $a(\theta)$ into $P_n(\CC)$ is $p(a(\theta))=(\tan(\theta),0,\dots,0,1))$. Thus if we make the change of
variables $r=\tan(\theta)$ and $t=(1+r^2)^{-1}=\cos^2(\theta)$ we have
\begin{equation}\label{asymptotic}
\phi_s(a(\theta))=t^{(m+\ell-s)/2}h_s(t).
\end{equation}

In [PT4] it is proved that $\phi_s(a(\theta))=(\cos\theta)^{m+\ell-s}p_s(\sin^2\theta)$ where $p_s$ is a polynomial.
Therefore $h_s(a(\theta))=p_s(\sin^2\theta)$ and $h_s(t)=p_s(1-t)$. Thus $H=H(t)$ is polynomial, and it only remains to prove
\eqref{cero}.

By taking limit when $t\rightarrow 0$ in \eqref{asymptotic} we get
$$\lim_{t\rightarrow 0}t^{(m+\ell-s)/2}h_s(t)=\phi_s(a(\pi/2)).$$
If $x\in\CC$ with $\vert x\vert=1$, then
$$b(x)=\left(\begin{matrix} x& 0& 0\\
0&I_{n-1}&0\\
0& 0&x^{-1}\end{matrix}\right)\in K$$
has the following nice property: $b(x)a(\pi/2)=a(\pi/2)b(x^{-1})$.
Hence
\begin{align*}
\phi_s(b(x)a(\pi/2))&=x^{m+\ell-s}\phi_s(a(\pi/2)),
\\ \phi_s(a(\pi/2)b(x^{-1}))&=
\phi_s(a(\pi/2))x^{-(m+\ell-s)}.
\end{align*}
Therefore if $s\ne m+\ell$ then
$\phi_s(a(\pi/2))=0$, and we have obtained
\begin{enumerate}
  \item[ (i)] if $s\ne m+\ell$, $\lim_{t\rightarrow0} t^{(m+\ell-s)/2}h_s(t)=0$,

\item[ (ii)] if $s=m+\ell$, $\lim_{t\rightarrow0} t^{(m+\ell-s)/2}h_s(t)=\phi_s(a(\pi/2))$.
\end{enumerate}
In particular if $m+\ell+1\le s\le\ell$ we have
$$\lim_{t\rightarrow 0}t^{(m+\ell-s)/2}h_s(t)=0. $$
Hence $t=0$ is a zero of $h_s$ of order $k\geq 1$.

If $H(t)=\sum_{j}t^jH_j$ with column vector coefficients $H_j=(H_{0j},\dots,H_{\ell,j})^t$, then
from $DH=\lambda H$ we get that the following three term recursion relation holds for all $j$,
\begin{equation}\label{recursion}
\begin{split}
\big((j&-1)(j-2)+(j-1)(A_0+n)+B_1-\lambda\big)H_{j-1}\\
&-\big(2j(j-1)+j(2A_0+n)-B_0-\lambda\big)H_j+(j+1)(j+A_0)H_{j+1}=0.
\end{split}
\end{equation}
Therefore if $m+\ell+1\le s\le\ell$ we have $h_s(t)=\sum_{j\ge k}t^jH_{s,j}$, with $H_{s,k}\ne0$. If we put $j=k-1$
from \eqref{recursion} we obtain
$$k(k+m+\ell-s)H_{s,k}=0.$$
Hence $k=s-m-\ell$ which completes the proof of the theorem.
\qed
\end{proof}

\medskip
A particular class of matrix-valued differential operators are the
hypergeometric ones which are of the form
\begin{equation}\label{ophiper}
t(1-t) \frac{d^2}{dt^2}+(C-tU)\frac{d}{dt}-V
\end{equation}
where $C,U$ and $V$ are constant square matrices. These were
introduced in \cite{T2}. We observe that the differential operator
appearing in \eqref{operadorD1}, obtained from the Casimir of $G$,
is closed to be hypergeometric but it is not.

\begin{defn}
A differential operator $D$ is conjugated to a hypergeometric
operator if there exists an invertible smooth matrix-valued function
$\Psi=\Psi(t)$ such that the differential operator $\tilde D$
defined by $\tilde DF= \Psi^{-1}D(\Psi F)$ is of the form
\begin{equation*}
{\tilde D}=t(1-t)\frac{d^2}{dt^2}+{\tilde
A}_1(t)\frac{d}{dt}+{\tilde A}_0,
\end{equation*}
where $\tilde A_1(t)$ is a matrix polynomial of degree 1 and
$\tilde A_0$ is a constant matrix.
\end{defn}

The differential operator
$$-D=t(1-t)\frac{d^2}{dt^2} + (A_0-tA_1 )\frac{d}{dt}+\frac 1{1-t}
(B_0+tB_1)H(t)$$ is conjugated to a hypergeometric operator if and
only if there exists an invertible smooth matrix-valued function
$\Psi(t)$ and constant matrices $C, U, V,$ such that
\begin{equation}\label{ecuacion01}
2t(1-t)\Psi(t)^{-1} \Psi'(t)+ \Psi(t)^{-1}(A_0-t(A_0+n))\Psi(t)=C-tU
\end{equation}
  and
\begin{equation}\label{ecuacion02}
\begin{split}
  t(1-t)\Psi(t)^{-1}\Psi''(t)&+\Psi(t)^{-1}(A_0-t(A_0+n))\Psi'(t)\\
&+ \frac{1}{1-t}\Psi(t)^{-1}(B_0+tB_1)\Psi(t)=-V.
\end{split}
\end{equation}

A problem of this kind was first solved in \cite{RT}.
Motivated by the form of the solution found there, in \cite{PT2} a solution of \eqref{ecuacion01} and \eqref{ecuacion02} is
looked for among those functions of the form $\Psi(t)=XT(t)$, where $X$
is a constant lower triangular matrix with ones in the main diagonal
and $T(t)=\sum_{0\le s\le\ell}(1-t)^sE_{ss}$. Then it is proved that
$X$, equal to the Pascal matrix, provides the unique solution to
equations \eqref{ecuacion01} and \eqref{ecuacion02}, and the
following fact is established in Corollary 3.3 in \cite{PT2}.

\begin{prop}\label{D}
Let $T(t)=\sum_{i=0}^\ell (1-t)^i E_{ii}$, let $X$ be the Pascal
matrix $X_{ij}=\binom ij$,  and let $\Psi(t)=XT(t)$. Then $\tilde
DF(t)=\Psi^{-1}(t)(-D)\big(\Psi(t)F(t)\big)$ is a hypergeometric
operator of the form
$$\tilde DF(t)=t(1-t)F''(t)+(C-tU)F'(t)-VF(t)$$
with
\begin{align*}
C& =\sum_{s=0}^\ell (m+\ell-s+1)E_{ss}-\sum_{s=1}^\ell sE_{s,s-1},
\quad U=\sum_{s=0}^\ell (n+m+\ell+s+1) E_{ss} , \displaybreak[0]\\
  V&=  \sum_{s=0}^\ell s(n+m+s-k)E_{ss}- \sum_{s=0}^{\ell-1}
(\ell-s)(n+s-k)E_{s,s+1}.
\end{align*}
\end{prop}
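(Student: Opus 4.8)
The plan is to unwind the definition of $\tilde D$ and reduce the statement to the two identities \eqref{ecuacion01} and \eqref{ecuacion02} for the specific choice $\Psi=XT$. Writing $\tilde DF=\Psi^{-1}(-D)(\Psi F)$ and expanding $(\Psi F)'=\Psi'F+\Psi F'$ and $(\Psi F)''=\Psi''F+2\Psi'F'+\Psi F''$ inside \eqref{operadorD1}, the coefficient of $F''$ collapses to $\Psi^{-1}\Psi\,t(1-t)=t(1-t)I$ automatically, while the coefficients of $F'$ and of $F$ are precisely the left-hand sides of \eqref{ecuacion01} and \eqref{ecuacion02}. So it suffices to evaluate those two left-hand sides at $\Psi=XT$, check that the first is a matrix polynomial of degree one and the second a constant matrix, and read off $C$, $U$ and $V$. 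Here $-D=t(1-t)\frac{d^2}{dt^2}+(A_0-t(A_0+nI))\frac{d}{dt}+\frac1{1-t}(B_0+tB_1)$ with $A_0,B_0,B_1$ as in \eqref{A0B0B1}.

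The second step exploits that $T(t)=\sum_i(1-t)^iE_{ii}$ is diagonal, so $\Psi^{-1}\Psi'=T^{-1}T'=-\frac1{1-t}N$ and $\Psi^{-1}\Psi''=T^{-1}T''=\frac1{(1-t)^2}N(N-I)$, where $N=\sum_{s=0}^\ell s\,E_{ss}$. Every remaining term then has the shape $T^{-1}(X^{-1}MX)T$ for one of the constant matrices $M\in\{A_0,\,A_0+nI,\,B_0,\,B_1\}$. Since conjugation by $T$ multiplies the $(i,j)$ entry of $X^{-1}MX$ by $(1-t)^{j-i}$, the requirement that \eqref{ecuacion01} be polynomial (regular at $t=1$) amounts to the vanishing of all entries of $X^{-1}A_0X$ strictly below the first subdiagonal; conversely, once that band constraint holds, the singular powers of $(1-t)$ are forced to cancel and the output is manifestly affine in $t$.

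The key computation is therefore a conjugation-by-Pascal identity. Writing $A_0=(m+\ell+1)I-N$, the first equation reduces to evaluating $X^{-1}NX$. Using $\binom ik\binom kj=\binom ij\binom{i-j}{k-j}$ together with the finite differences $\sum_m(-1)^m\binom pm$ and $\sum_m(-1)^m m\binom pm$, which both vanish for $p\ge2$, one finds $X^{-1}NX=N+\sum_{i} i\,E_{i,i-1}$, lower bidiagonal. Substituting gives $C-tU=-2tN-tnI+(1-t)\,T^{-1}(X^{-1}A_0X)T$, whose subdiagonal carries no power of $(1-t)$ and whose diagonal is affine in $t$; this yields exactly $C=\sum_s(m+\ell-s+1)E_{ss}-\sum_s s\,E_{s,s-1}$ and $U=\sum_s(n+m+\ell+s+1)E_{ss}$.

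For \eqref{ecuacion02} the same mechanism applies, but now one must also conjugate the bidiagonal matrices $B_0$ and $B_1$ by $X$ and combine the term $\frac1{1-t}T^{-1}(X^{-1}(B_0+tB_1)X)T$ with the second-derivative contribution $\frac{t}{1-t}N(N-I)$ and with the cross term $-\frac1{1-t}\big[(1-t)T^{-1}(X^{-1}A_0X)T-tnI\big]N$ coming from $\Psi^{-1}(A_0-tA_1)\Psi'$. I expect this to be the main obstacle: unlike $A_0$, the conjugates $X^{-1}B_0X$ and $X^{-1}B_1X$ are not triangular-banded for free, so one produces negative powers of $(1-t)$ of order one and two (a priori higher), and the whole point is that they cancel exactly across the three contributions, leaving a constant matrix $-V$. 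Tracking these cancellations entry by entry and verifying the surviving constant is $V=\sum_s s(n+m+s-k)E_{ss}-\sum_s(\ell-s)(n+s-k)E_{s,s+1}$ is the delicate bookkeeping; it is precisely where the Pascal matrix (rather than some other unipotent $X$) is forced, in agreement with the uniqueness asserted before the statement.
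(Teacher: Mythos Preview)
Your approach is correct and is, in fact, more than what the paper itself supplies: the paper does not prove this proposition at all but simply cites Corollary~3.3 of \cite{PT2}, after remarking that one looks for $\Psi$ of the shape $XT(t)$ with $X$ lower unitriangular and then discovers that the Pascal matrix is the unique solution. So there is nothing to compare against beyond the citation.

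On substance: your reduction of $\tilde D$ to the two identities \eqref{ecuacion01}--\eqref{ecuacion02}, the observations $\Psi^{-1}\Psi'=T^{-1}T'=-\tfrac1{1-t}N$ and $\Psi^{-1}\Psi''=\tfrac1{(1-t)^2}N(N-I)$, and the Pascal conjugation $X^{-1}NX=N+\sum_i iE_{i,i-1}$ are all correct, and from them your derivation of $C$ and $U$ via \eqref{ecuacion01} goes through exactly as you wrote. That half is a complete proof.

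What remains is only the execution of \eqref{ecuacion02}. You have accurately described the pieces and the mechanism (conjugate $B_0$ and $B_1$ by $X$, then by $T$, combine with the second-derivative and cross terms, and check that all negative powers of $1-t$ cancel), but you have not carried it out; the phrase ``delicate bookkeeping'' is a placeholder, not a proof. There is no missing idea and no wrong turn here---the same binomial identities that gave $X^{-1}NX$ handle $X^{-1}B_0X$ and $X^{-1}B_1X$ once you write $B_0,B_1$ in terms of $N$ and the shifts $\sum_s E_{s,s\pm1}$---but until those conjugations are written down and the cancellation exhibited, the argument for $V$ is incomplete. If you want to shorten the task, note that $B_0+B_1$ kills the vector $(1,\dots,1)^t$; since $X(1,0,\dots,0)^t=(1,\dots,1)^t$, this gives a free check on the first column of $X^{-1}(B_0+B_1)X$ and on the claimed $V$.
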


\begin{prop}\label{E}
Let $\tilde E$ be defined by $\tilde EF(t)=
\Psi(t)^{-1}(-E)\big(\Psi(t)F(t)\big)$. Then
$$\tilde EF(t)=t(M_0-tM_1)F''(t)+(P_0-tP_1)F'(t)-(m-k)VF(t)$$
with
\begin{align*}
M_0=&\;\sum_{s=0}^\ell (m+\ell-s)E_{ss}-\sum_{s=1}^\ell sE_{s,s-1},
\quad M_1=\sum_{s=0}^\ell (m+\ell-s)E_{ss} , \displaybreak[0]\\
P_0=&\;\sum_{s=0}^\ell
\big((m+\ell)(m+\ell+1)+\ell(n-k)-2s(n+m-k+s)\big)E_{ss}\\
&-\sum_{s=1}^{\ell}s(n-k+\ell+2m+s)E_{s,s-1}+\sum_{s=0}^{\ell-1}(\ell-s)(n-k+s)E_{s,s+1},\displaybreak[0]\\
P_1=&\;\sum_{s=0}^\ell(m+\ell-s)(m+n+\ell+s+1)E_{ss}+\sum_{s=0}^{\ell-1}(\ell-s)(n-k+s)E_{s,s+1}.
\end{align*}
\end{prop}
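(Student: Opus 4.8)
The plan is to prove the identity by computing $\tilde E F=\Psi(t)^{-1}(-E)\bigl(\Psi(t)F(t)\bigr)$ directly and reading off the coefficients of $F''$, $F'$ and $F$. Inserting $(\Psi F)'=\Psi'F+\Psi F'$ and $(\Psi F)''=\Psi''F+2\Psi'F'+\Psi F''$ into the operator $-E=t(1-t)M\frac{d^2}{dt^2}+(C_0-tC_1)\frac{d}{dt}+\frac1{1-t}(D_0+tD_1)$ of \eqref{operadorE1} and multiplying on the left by $\Psi^{-1}$, the coefficient of $F''$ is $t(1-t)\Psi^{-1}M\Psi$, the coefficient of $F'$ is $\Psi^{-1}\bigl(2t(1-t)M\Psi'+(C_0-tC_1)\Psi\bigr)$, and the coefficient of $F$ is $\Psi^{-1}\bigl(t(1-t)M\Psi''+(C_0-tC_1)\Psi'+\frac1{1-t}(D_0+tD_1)\Psi\bigr)$. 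The whole proof reduces to evaluating these three matrices.

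The computation is made tractable by the explicit structure $\Psi=XT$. Writing $N=\sum_{s=0}^\ell sE_{ss}$, the factor $T(t)=\sum_s(1-t)^sE_{ss}$ satisfies $T'=-\frac1{1-t}TN$ and $T''=\frac1{(1-t)^2}TN(N-1)$, whence
\begin{equation*}
\Psi'(t)=-\tfrac1{1-t}\,\Psi(t)N,\qquad \Psi''(t)=\tfrac1{(1-t)^2}\,\Psi(t)N(N-1).
\end{equation*}
Moreover the Pascal matrix is $X=e^{S}$ with $S=\sum_{s=1}^\ell sE_{s,s-1}$, so any conjugation $X^{-1}ZX$ is a \emph{finite} sum of iterated commutators of $Z$ with $S$; and conjugation by $T$ acts diagonally on bands, namely $T^{-1}E_{s,s\pm1}T=(1-t)^{\mp1}E_{s,s\pm1}$ while fixing the diagonal. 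Since all of $M,C_0,C_1,D_0,D_1$ are tridiagonal, these two rules reduce every coefficient to a finite combinatorial identity in $s$ and $(1-t)^{\pm1}$.

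The coefficient of $F''$ illustrates the mechanism. From $[S,M]=S$ and $[S,S]=0$ one gets $X^{-1}MX=M-S$, and then $T^{-1}(M-S)T=M-\frac1{1-t}S$, so that
\begin{equation*}
t(1-t)\,\Psi^{-1}M\Psi=t(1-t)\bigl(M-\tfrac1{1-t}S\bigr)=t\bigl((M-S)-tM\bigr)=t(M_0-tM_1),
\end{equation*}
which is exactly the claimed leading term. The coefficients of $F'$ and of $F$ are handled the same way, the only subtlety being that $2t(1-t)M\Psi'$, the terms $(C_0-tC_1)\Psi'$, $t(1-t)M\Psi''$, and the genuinely singular $\frac1{1-t}(D_0+tD_1)\Psi$ each produce negative powers of $(1-t)$; the crux is to verify that these all cancel, leaving the polynomial $P_0-tP_1$ at first order and a \emph{constant} matrix at zeroth order.

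The main obstacle is therefore the zeroth‑order coefficient, and the cleanest way to organize its singular cancellations — and to explain the appearance of the factor $(m-k)$ — is to compare with Proposition \ref{D}. Using the linear relations $A_0=M+I$, $\;C_0=MA_0+(\text{super-diagonal part of }B_0)$, $\;C_1=M(A_0+n)+(\text{sub-diagonal part of }B_1)$ and $D_1=(2m+n+\ell-k)B_1$, one writes $-E=M(-D)+R$ with $R$ an explicit first‑plus‑zeroth order correction. Then $\Psi^{-1}M(-D)(\Psi F)=\bigl(M-\frac1{1-t}S\bigr)\tilde DF$, and substituting the already‑established form $\tilde DF=t(1-t)F''+(C-tU)F'-VF$ from Proposition \ref{D} isolates a singular contribution $\frac1{1-t}S\,\tilde DF$ that must combine with $\Psi^{-1}R\Psi$. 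Carrying out this bookkeeping, the first‑order singular pieces cancel against $\Psi^{-1}R\Psi$ to give $P_0-tP_1$, and at zeroth order the surviving constant is precisely $-(m-k)V$; verifying this last cancellation — that the $-MV$ term from $M(-D)$, the $\frac1{1-t}SV$ term, and the zeroth‑order part of $\Psi^{-1}R\Psi$ add up exactly to $(m-k)V$ — is the one genuinely delicate step, while everything else is routine band‑matrix algebra.
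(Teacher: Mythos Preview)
Your proposal is sound: the paper does not actually supply a proof of this proposition, only a Remark stating that it ``can be reached by a long, direct and careful computation'' and suggesting as a shortcut the identity $E^{PR}=2(2m+\ell)\tilde D-3\tilde E$, where $E^{PR}$ is the operator appearing in Section~3 of \cite{PR} after the change of variables $t=1-u$. Your approach is precisely the ``long, direct and careful computation'' the paper alludes to, carried out in an organized way. Your verification of the $F''$ coefficient via $[S,M]=S$, $X^{-1}MX=M-S$, $T^{-1}ST=\tfrac1{1-t}S$ is correct, and the structural relations $A_0=M+I$, $C_0=MA_0+B_0^{\mathrm{super}}$, $C_1=M(A_0+n)+B_1^{\mathrm{sub}}$, $D_1=(2m+n+\ell-k)B_1$ that you use to write $-E=M(-D)+R$ all check out and make the first- and zeroth-order computations tractable.

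The difference between your route and the paper's suggested shortcut is one of self-containedness versus economy. Your direct argument stays entirely within the framework of the present paper and makes the cancellation mechanism transparent (the decomposition $-E=M(-D)+R$ is a nice device), at the cost of a somewhat lengthy band-matrix bookkeeping at zeroth order that you flag but do not write out. The paper's shortcut imports the already-verified operator from \cite{PR}, so that $\tilde E=\tfrac13\bigl(2(2m+\ell)\tilde D-E^{PR}\bigr)$ reduces the proposition to Proposition~\ref{D} together with the known form of $E^{PR}$; this avoids redoing the singular cancellations from scratch but relies on an external reference. Either route is valid; yours gives more insight into why the zeroth-order term is exactly $-(m-k)V$.
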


\begin{remark}
A proof of the above proposition can be reached by a long, direct
and careful computation. A shorter path would be as follows: to start by
observing that the differential operator $E^{PR}$ obtained from the
differential operator $E$ which appears in Section 3 of \cite{PR},
by making the change of variables $t=1-u$, it is given in terms of
our $\tilde D$ and $\tilde E$ by $E^{PR}=2(2m+\ell)\tilde D-3\tilde E$.
\end{remark}

\section{The eigenvalues of $D$ and $E$}\label{autovalores}

\noindent According to Proposition \ref{defeq} and Lemma \ref{Kinvariantes} the irreducible spherical
functions $\Phi$ of our pair $(G,K)$ are eigenfunctions of the differential operators  $\Delta_P$ and $\Delta_Q$, introduced in \eqref{DeltaPQ}.
In this situation the eigenvalues $[\Delta_P\Phi](e)$ and $[\Delta_Q\Phi](e)$ are scalars  because the
irreducible spherical functions are all of height one.

Let $\Phi=\Phi^{\mm,\kk}$ be the irreducible spherical function associated to the representations $\mm$ of $G$ and $\kk$ of $K$,
$$ \kk=(\underbrace{m+\ell,\dots,m+\ell}_k,\underbrace{ m,\dots,m}_{n-k}),$$
$$ \mm=\mm(w,r)=(w+m+\ell,\underbrace{m+\ell,\dots,m+\ell}_{k-1},m+r,\underbrace{m,\dots,m}_{n-k-1},-w-r ),$$
where  $0\le w$, $0\le r\le \ell$ and $ -m-r\leq w$. See \eqref{onestep1} and \eqref{n+1tupla}. Let us recall that $\Phi$ is an spherical function of type $\kk$, because $s_\mm=s_\kk$.

\smallskip
 Since the differential operators $D$ and $E$ come from $\Delta_P$ and $\Delta_Q$ respectively, (see \eqref{DeltaHPhi}) the corresponding function $H(t)=H^{\mm,\kk}(t)$  satisfies
$$DH^{\mm,\kk}=[\Delta_P\Phi^{\mm,\kk}](e)H^{\mm,\kk},\qquad EH^{\mm,\kk}=[\Delta_Q\Phi^{\mm,\kk}](e)H^{\mm,\kk}.$$

\noindent Now we will concentrate on computing the scalars  $[\Delta_P\Phi^{\mm,\kk}](e)$ and
$[\Delta_Q\Phi^{\mm,\kk}](e)$. We start by observing that if $\Delta\in D(G)^K$, then we have that
$$[\Delta \Phi^{\mm,\kk}](e)=\dot\pi_{\mm}(\Delta).$$

Let us recall that the Casimir operator of $\GL(n,\CC)$  is
$$\Delta^{(n)}=\sum_{1\le i,j\le n}E_{ij}E_{ji}=\sum_{1\leq i\leq n} E_{ii}^2+ \sum_{1\leq i<j\leq n} (E_{ii}-E_{jj})+ 2\sum_{1\leq
i<j\leq n} E_{ji}E_{ij}.$$
Thus the differential operator $\Delta_P=\sum_{1\le j\le n}E_{n+1,j}E_{j,n+1}$,  can be also
written as
$$\Delta_P=\frac12
\Bigl(\Delta^{(n+1)}-\Delta^{(n)} -E_{n+1,n+1}^2- \sum_{1\leq j\leq
n} (E_{jj}-E_{n+1,n+1}) \Bigr).$$

The highest weight of $V_\mm$ is
\begin{align*}
\mu_\mm=&\;(w+m+\ell)x_1+(m+\ell)(x_2+\cdots+x_k)+(m+r)x_{k+1}\\
&\quad +m(x_{k+2}+\cdots+x_n)-(w+r)x_{n+1},
\end{align*}
then it follows that
\begin{align*}
 \dot\pi_\mm & ( \Delta^{(n+1)})=
\bigl((w+m+\ell)^2+(m+\ell)^2(k-1)+(m+r)^2+m^2(n-k-1) \displaybreak[0]\\
&+(w+r)^2+w(k-1)+w+\ell-r+(w+\ell)(n-k-1) +2w+m+\ell+r \displaybreak[0]\\
&+(\ell-r)(k-1)+\ell(k-1)(n-k-1) +(m+\ell+w+r)(k-1)\displaybreak[0]\\
&+r(n-k-1)+m+w+2r +(m+w+r)(n-k-1)\bigr)I_\mm.
\end{align*}

\noindent
If $v_0\in V_\kk\subset V_\mm$ is a $\liegl(n,\CC)$-highest weight vector, then its weight is
$$\mu_0=(m+\ell)(x_1+\cdots+x_{k})+m(x_{k+1}+\cdots + x_n),$$
because
$$\dot\pi_\mm(E_{n+1,n+1})v_0 =\dot\pi_\mm\big(\sum_{1\leq j\leq n+1}\!\!\!\!E_{jj}\big)v_0 - \dot\pi_\mm\big(\sum_{1\leq j\leq n}\!\!\!E_{jj}\big)v_0 =(s_\mm-s_\kk)v_0=0.$$
 We also have that
\begin{align*}
\dot\pi_\mm(\Delta^{(n)})v_0 &=\bigl(k(m+\ell)^2+(n-k)m^2+k(n-k)\ell \bigr)v_0
\end{align*}
and therefore we obtain
$$\dot\pi_\mm(\Delta_P)v_0=\big(w(w+m+\ell+r+n)+r(m+r-k+n)\big)v_0.$$

\

To compute
 $\dot\pi_\mm(\Delta_Q) $ we start by writing $\Delta_Q=\sum_{1\leq i,j\leq n} E_{n+1,i}E_{j,n+1}E_{ij}$ in terms of elements in $D(G)^G$ and $D(K)^K$.\\

Let us recall that in the proof of Lemma \ref{F7-F8} we introduced the element
$$F^{(n)}=\sum_{1\le i,j,k\le n}E_{ki}E_{ij}E_{jk}$$
which is in the center of the universal enveloping algebra of
${\lieg}(n,\CC)$. Hence
\begin{equation*}\label{E0}
\begin{split}
F^{(n+1)}-F^{(n)}=&\;\sum_{1\leq j,k \leq n+1}
E_{k,n+1}E_{n+1,j}E_{jk}+\sum_{\substack{ 1\leq i \leq n\\ 1\leq k\leq n+1}}
E_{ki}E_{i,n+1}E_{n+1,k}\\
&+\sum_{1\leq i,j \leq n}E_{n+1,i}E_{ij}E_{j,n+1}.
\end{split}
\end{equation*}

\medskip
We also have
\begin{align*}
\sum_{1\leq j,k \leq n+1} E_{k,n+1}E_{n+1,j}E_{jk}& =\;\Delta^{(n+1)}+\Delta_Q+E_{n+1,n+1}^3-(n+1)E_{n+1,n+1}^2\\
& \quad +(2E_{n+1,n+1}-1)\Delta_P,\displaybreak[0]\\
\sum_{\substack{1\leq i \leq n\\ 1\leq k\leq n+1}}
E_{ki}E_{i,n+1}E_{n+1,k}&=\;\Delta^{(n)}-E_{n+1,n+1}\sum_{1\leq i \leq n}E_{ii}+\Delta_Q+E_{n+1,n+1}\Delta_P,\displaybreak[0]\\
\sum_{1\leq i,j \leq n} E_{n+1,i}E_{ij}E_{j,n+1}& =\;n\Delta_P+\Delta_Q.
\end{align*}

\noindent Thus
\begin{equation*}
\begin{split}
\Delta_Q=\;\frac13\Big( & F^{(n+1)}-F^{(n)}-\Delta^{(n+1)}-\Delta^{(n)}-(3E_{n+1,n+1}+n-1)\Delta_P\\
&-(n+1)E_{n+1,n+1}^2+E_{n+1,n+1}\sum_{1\le i\le n}E_{ii}\Big).
\end{split}
\end{equation*}

\noindent Taking  into account that $E_{n+1,n+1}v_0=0$ we have
\begin{equation}\label{E1}
\begin{split}
\dot\pi_\mm(\Delta_Q)v_0=
&\;\frac13\Big(\dot\pi_\mm\big(F^{(n+1)}\big)-\dot\pi_\mm\big(F^{(n)}\big)-\dot\pi_\mm\big(\Delta^{(n+1)}\big)-\dot\pi_\mm\big(\Delta^{(n)}\big)\\
&-(n-1)\dot\pi_\mm\big(\Delta_P\big)\Big)v_0.
\end{split}
\end{equation}
Observe that we have already calculated   $\dot\pi_\mm\big(\Delta^{(n+1)}\big)v_0$, $\dot\pi_\mm\big(\Delta^{(n)}\big)v_0$, $\dot\pi_\mm\big(\Delta_P\big)v_0$ and also $ \dot\pi_\mm \big(F^{(n)}\big)v_0$  in \eqref{F6}.

Since $F^{(n+1)}$ is $\GL(n+1,\CC)$-invariant it is enough to compute
$\dot\pi_\mm (F^{(n+1)})$ on a highest weight vector $v_\mm$ of $V_\mm$; hence it is enough to write $F^{(n+1)}$ modulo the left ideal of $U(\liegl(n+1,\CC))$ generated by $\{E_{ij}:1\le i<j\le n+1\}$.

To rewrite the summand  $E_{ki}E_{ij}E_{jk}$ of $F^{(n+1)}$ we partition the set of indices as we did in \eqref{partition}. This yields to
\begin{equation*}
F^{(n+1)}\equiv  \sum_{1\le k\le n+1}E_{kk}^3+\sum_{1\le i<j\le
n+1}(E_{ii}-E_{jj})(2E_{ii}+E_{jj}+2(n+1-j)+1).
\end{equation*}

\noindent Thus we obtain
\begin{align*}
 \dot\pi_\mm  (F^{(n+1)}) & =\bigl((w+m+\ell)^3+(m+\ell)^3(k-1)+(m+r)^3+m^3(n-k-1)\displaybreak[0]\\
&-(w+r)^3+w(k-1)(2w+3m+3\ell+2n-k+1) \displaybreak[0]\\
&+(w+\ell-r)(2w+3m+2\ell+r+2n-2k+1)\displaybreak[0]\\
&+(w+\ell)(n-k-1)(2w+3m+2\ell+n-k+1)\displaybreak[0]\\
&+(2w+m+\ell+r)(w+2m+2\ell-r+1)\displaybreak[0]\\
&+(\ell-r)(k-1)(3m+2\ell+r+2n-2k+1)\displaybreak[0]\\
&+\ell(k-1)(n-k-1)(3m+2\ell+n-k+1)\displaybreak[0]\\
&+(m+\ell+w+r)(k-1)(2m+2\ell-w-r+1)\displaybreak[0]\\
&+r(n-k-1)(3m+2r+n+1-k)\displaybreak[0]\\
&+(m+w+2r)(2m+r-w+1)\displaybreak[0]\\
&+(m+w+r)(n-k-1)(2m-w-r+1)\bigr)I_\mm.
\end{align*}

From \eqref{E1}, taking into account the previous calculations, we obtain after some careful computation
$$\dot\pi_\mm(\Delta_Q)v_0= \big(w ( m+\ell - r) (w + m +\ell + r + n)+r (m - k) (m + r - k + n)\big)v_0.$$

\smallskip
Therefore we have proved the following proposition.

\begin{prop}\label{autovalores1} The function $H^{\mm,\kk}$ associated to the spherical function $\Phi^{\mm,\kk}$ satisfies
$$DH^{\mm,\kk}=\big(w(w+m+\ell+r+n)+r(m+r-k+n)\big)H^{\mm,\kk},$$
$$EH^{\mm,\kk}=\big(w (m+\ell - r) (w + m + \ell + r + n)+r (m - k) (m + r - k + n)\big)H^{\mm,\kk}.$$
\end{prop}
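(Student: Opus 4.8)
The plan is to exploit that $\Phi^{\mm,\kk}$ is, by Proposition \ref{defeq}(iii) together with Lemma \ref{Kinvariantes}, an eigenfunction of the $K$-invariant operators $\Delta_P$ and $\Delta_Q$, the eigenvalues $[\Delta_P\Phi^{\mm,\kk}](e)$ and $[\Delta_Q\Phi^{\mm,\kk}](e)$ being scalars because every irreducible spherical function of $(G,K)$ has height one. I would first record the two reductions that turn the statement into a purely Lie-algebraic computation: on the one hand $[\Delta\Phi^{\mm,\kk}](e)=\dot\pi_\mm(\Delta)$ for every $\Delta\in D(G)^K$, so that the eigenvalues are the scalars by which $\Delta_P$ and $\Delta_Q$ act on the irreducible module $V_\mm$; on the other hand, from $\Phi^{\mm,\kk}=H^{\mm,\kk}\Phi_\kk$ and \eqref{DeltaHPhi}, the function $H^{\mm,\kk}$ is an eigenfunction of $D$ and $E$ with exactly those same scalars. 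It therefore suffices to evaluate $\dot\pi_\mm(\Delta_P)$ and $\dot\pi_\mm(\Delta_Q)$.

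For $\Delta_P$, which is quadratic, the natural move is to rewrite it—as displayed above—as a combination of the Casimir elements $\Delta^{(n+1)}$ and $\Delta^{(n)}$ together with diagonal first-order corrections. Since the Casimirs are central they act by scalars on $V_\mm$, and these scalars can be read off from the respective highest weights. I would evaluate every term on the $\liegl(n,\CC)$-highest weight vector $v_0\in V_\kk\subset V_\mm$ of weight $\mu_0$, using the normalisation $E_{n+1,n+1}v_0=0$ (valid because $s_\mm=s_\kk$), so that the diagonal corrections act by explicit scalars. The computation then reduces to substituting $\mu_\mm$ and $\mu_0$ into the eigenvalue formula for each Casimir and simplifying to the claimed value $w(w+m+\ell+r+n)+r(m+r-k+n)$.

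The operator $\Delta_Q$ is cubic, and handling it is where the main difficulty lies. The strategy is to trade it for the cubic central elements $F^{(n+1)}$ and $F^{(n)}$ of Lemma \ref{F7-F8}: expanding $F^{(n+1)}-F^{(n)}$ produces $\Delta_Q$ alongside $\Delta^{(n+1)}$, $\Delta^{(n)}$, $\Delta_P$ and purely diagonal terms, so that $\Delta_Q$ can be written as $\tfrac13$ of a combination whose only genuinely new ingredient is $\dot\pi_\mm(F^{(n+1)})$. To compute that scalar I would reduce $F^{(n+1)}$ modulo the left ideal of $U(\liegl(n+1,\CC))$ generated by $\{E_{ij}:i<j\}$—the same partition-of-indices bookkeeping used for $F^{(n)}$ in the proof of Lemma \ref{F7-F8}—obtaining a diagonal polynomial which is then evaluated on the highest weight $\mu_\mm$ of $V_\mm$; this is legitimate since $F^{(n+1)}$ is central, hence acts by a scalar computable on any highest weight vector.

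The remaining work is the assembly: I would substitute the values of $\dot\pi_\mm(\Delta^{(n+1)})$, $\dot\pi_\mm(\Delta^{(n)})$, $\dot\pi_\mm(\Delta_P)$ and $\dot\pi_\mm(F^{(n)})$—the last of these via \eqref{F6}—together with $\dot\pi_\mm(F^{(n+1)})$ into the expression for $\Delta_Q$, and then collapse the resulting large polynomial in $m,\ell,w,r,n,k$ to the factored form $w(m+\ell-r)(w+m+\ell+r+n)+r(m-k)(m+r-k+n)$. I expect this final simplification, rather than any conceptual point, to be the step demanding the most care; once both $\dot\pi_\mm(\Delta_P)$ and $\dot\pi_\mm(\Delta_Q)$ are in hand the proposition is immediate.
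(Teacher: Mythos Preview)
Your proposal is correct and follows essentially the same route as the paper: reduce to computing $\dot\pi_\mm(\Delta_P)$ and $\dot\pi_\mm(\Delta_Q)$ on the $\liegl(n,\CC)$-highest weight vector $v_0\in V_\kk$, express $\Delta_P$ via $\Delta^{(n+1)}-\Delta^{(n)}$ and diagonal terms, and express $\Delta_Q$ via $F^{(n+1)}-F^{(n)}$ together with the quadratic Casimirs and $\Delta_P$, computing $\dot\pi_\mm(F^{(n+1)})$ by reducing modulo the left ideal generated by $\{E_{ij}:i<j\}$. The paper carries out exactly these steps, and the only laborious part is indeed the final algebraic collapse to the factored eigenvalues.
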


\smallskip
\section{The one-step spherical functions}

\subsection{The simultaneous eigenfunctions of $D$ and $E$}
\mbox{} \\
In Section \ref{Hipergeometrization} we introduced the differential operators $\tilde D=\Psi^{-1}(-D)\Psi$ and $\tilde E=\Psi^{-1}(-E)\Psi$.
If we put $F^{\mm,\kk}(t)=\Psi(t)H^{\mm,\kk}(t)$, from Proposition \ref{autovalores1} we get
\begin{equation}\label{autovalores2}
\tilde D F^{\mm,\kk}=\lambda(w,r)F^{\mm,\kk},\qquad \tilde E F^{\mm,\kk}=\mu(w,r)F^{\mm,\kk},
\end{equation}
where
\begin{equation}\label{autovalores3}
\begin{split}
\lambda(w,r)&=-w(w+m+\ell+r+n)-r(m+r-k+n),\\
\mu(w,r)&=-w (m+\ell - r) (w + m +\ell + r + n)-r (m - k) (m + r - k + n).
\end{split}
\end{equation}

Since spherical functions are analytic functions on  $G$ it follows
that $F^{\mm,\kk}(t)$ are analytic in a neighborhood  of
$t=1$. Thus we make the change of variables $u=1-t$, so we will be
dealing with analytic functions around $u=0$ which are simultaneous
eigenfunctions of the following differential operators,
\begin{equation}\label{DD,EE}
\begin{split}
DF&=u(1-u)F''+(U-C-uU)F'-VF,\\
EF&=(1-u)(M_0-M_1+uM_1)F''+(P_1-P_0-uP_1)F'-(m-k)VF,
\end{split}
\end{equation}
where the coefficient matrices are those given in Propositions \ref{D} and \ref{E}.

\medskip
The matrix-valued hypergeometric function was introduced  in
\cite{T2}. Let $W$ be a finite-dimensional complex vector space, and
let $A,B$ and $C\in \End(W)$.

\noindent The hypergeometric equation is
\begin{align}\label{hiper0}
u(1-u)F'' +(C-u (I+A+B))F'- AB F=0.
\end{align}
If the eigenvalues of $C$ are not in $-\NN_0$, we define the function
\begin{equation*}
{}_2\!F_1 \!\!\left( \begin{smallmatrix} A\,;\,B\\
C\end{smallmatrix} ; u\right)=\sum_{m=0}^\infty
\frac{u^m}{m!}(C;A;B)_m ,
\end{equation*}
where the symbol $(C;A;B)_m$ is defined inductively by $(C;A;B)_0
=I$ and
\begin{align*}
(C;A;B)_{m+1}=(C+m)^{-1}(A+m)(B+m)(C;A;B)_m,
\end{align*}
for all $m\ge0$.
The function ${}_2\!F_1 \!\!\left( \begin{smallmatrix} A\,;\,B\\
C\end{smallmatrix} ; u\right)$ is analytic on $|u|<1$ with values in
$\End(W)$. Moreover if $F_0\in W$, then $F(u)= {}_2\!F_1 \!\!\left( \begin{smallmatrix} A\,;\,B\\
C\end{smallmatrix} ; u\right)\!F_0$ is a solution of the
hypergeometric equation \eqref{hiper0} such that $F(0)=F_0$.
Conversely any  solution $F$, analytic at $u=0$ is of this form.

In the scalar case the differential operator \eqref{ophiper} is
always of the form given in \eqref{hiper0}; after solving a
quadratic equation we can find $A,B$ such that $U=1+A+B$ and $V=AB$.
This is not necessarily the case when $\dim(W)>1$. In other words, a
differential equation of the form
\begin{equation}\label{hyper1}
u(1-u)F''+(C-u U)F'- V F=0,
\end{equation}
with $U,V,C\in\End(W)$, cannot always be reduced to the form
of \eqref{hiper0}, because a quadratic equation in a noncommutative
setting as $\End(W)$ may have no solutions. Thus it is also
important to give a way to solve \eqref{hyper1}.

If the eigenvalues of $C$ are not in $-\NN_0$, let us introduce the
sequence $[C,U,V]_m\in\End(W)$ by defining inductively $[C;U;V]_0=I$
and
\begin{equation*}
[C;U;V]_{m+1}=(C+m)^{-1}(m^2+m(U-1)+V)[C;U;V]_m,
\end{equation*}
for all $m\ge0$. Then the function
\begin{equation*}
{}_2\!H_1 \!\!\left( \begin{smallmatrix} U\,;\,V\\
C\end{smallmatrix} ; u\right)=\sum_{m=0}^\infty
\frac{u^m}{m!}[C;U;V]_m ,
\end{equation*}
is analytic on $|u|<1$ and it is the unique solution of
\eqref{hyper1} analytic at $u=0$, with values in $\End(W)$, whose
value at $u=0$ is $I$.

\smallskip

Let $F^{\mm,\kk}$ be the function defined by
$F^{\mm,\kk}(u)=F^{\mm,\kk}(1-t)$. Then the map
$\Phi^{\mm,\kk}\mapsto F^{\mm,\kk}$ establishes a one-to-one
correspondence between the equivalence classes of irreducible
spherical functions of $\SU(n+1)$ of a fixed $K$-type $\kk$ as in
\eqref{onestep1}, with certain simultaneous $\CC^{\ell+1}$-analytic
eigenfunctions on the open unit disc of the differential operators
$D$ and $E$ given in \eqref{DD,EE}.

Therefore
$$F^{\mm,\kk}(u)={}_2\!H_1 \!\!\left( \begin{smallmatrix} U\,;\,V+\lambda\\
U-C\end{smallmatrix} ; u\right)F^{\mm,\kk}(0),$$ with
$\lambda=\lambda(w,r)$.

The goal now is to describe all simultaneous $\CC^{\ell+1}$-valued
eigenfunctions of the differential operators $D$ and $E$ analytic on
the open unit disc $\Omega$. We let
$$V_{\lambda}=\{F=F(u):DF=\lambda F, \;F\; \text{analytic on}\; \Omega\}.$$

Since the initial value $F(0)$ determines $F\in V_\lambda$, we have
that the linear map $\nu:V_\lambda\rightarrow \CC^{\ell+1}$ defined
by $\nu(F)=F(0)$ is a surjective isomorphism. Because $\Delta_P$ and
$\Delta_Q$ commute, both being in the algebra $D(G)^K$ which is
commutative, $D$ and $E$ also commute. Moreover, since $E$ has
polynomial coefficients, $E$ restricts to a linear operator of
$V_\lambda$. Thus we have the following commutative diagram:
\begin{equation*}\label{diagrama1}
\begin{CD}
V_\lambda @ >E >>V_\lambda \\ @ V \nu VV @ VV \nu V \\
\CC^{\ell+1} @> M(\lambda) >> \CC^{\ell+1}
\end{CD}
\end{equation*}
where $M(\lambda)$ is the $(\ell+1)\times (\ell+1)$ matrix given by
\begin{equation*}\label{Mlambda}
\begin{split}
M(\lambda)=&\;(M_0-M_1)(U-C+1)^{-1}(U+V+\lambda)(U-C)^{-1}(V+\lambda)\\
&+(P_1-P_0)(U-C)^{-1}(V+\lambda)-(m-k)V.
\end{split}
\end{equation*}

\smallskip
Although the matrix $M(\lambda)$  has a complicated form we are able
to know its characteristic polynomial, via an indirect argument (cf.
Theorem 10.3 in \cite{GPT1}).

\begin{prop}\label{caracteristico}
$$\det(\mu-M(\lambda))=\prod_{r=0}^\ell(\mu-\mu_r(\lambda)),$$
where $\mu_r(\lambda)=\lambda(m+\ell-r)+r(m+r-k+n)(\ell-r+k)$.
Moreover, all eigenvalues $\mu_k(\lambda)$ of $M(\lambda)$ have
geometric multiplicity one. In other words all eigenspaces are one-dimensional.
If $v=(v_0,...,v_\ell)^t$ is a nonzero
$\mu_r(\lambda)$-eigenvector of $M(\lambda)$, then $v_0\neq 0$.
\end{prop}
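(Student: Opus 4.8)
The plan is to read off the spectral data of $M(\lambda)$ indirectly from the spherical functions already at our disposal, since computing $\det(\mu-M(\lambda))$ directly from the explicit (and unwieldy) definition of $M(\lambda)$ is hopeless. The starting point is the algebraic identity
$$\mu(w,r)=\mu_r(\lambda(w,r)),\qquad \mu_r(\lambda)=\lambda(m+\ell-r)+r(m+r-k+n)(\ell-r+k),$$
obtained by substituting the formula for $\lambda(w,r)$ of \eqref{autovalores3} into $\mu_r$ and using $(\ell-r+k)-(m+\ell-r)=k-m$. Now fix $r\in\{0,\dots,\ell\}$. For every integer $w$ with $(w,r)\in S$, the function $F^{\mm(w,r),\kk}$ lies in $V_{\lambda(w,r)}$ and satisfies $EF^{\mm,\kk}=\mu(w,r)F^{\mm,\kk}$ by \eqref{autovalores2} and Proposition \ref{autovalores1}; hence, by the commutative diagram defining $M(\lambda)$, the nonzero vector $\nu(F^{\mm,\kk})$ is a $\mu(w,r)$-eigenvector of $M(\lambda(w,r))$. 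Thus $\mu_r(\lambda)$ is an eigenvalue of $M(\lambda)$ at the infinitely many points $\lambda=\lambda(w,r)$.

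To turn this into the characteristic polynomial I would pass to polynomials. Since the entries of $M(\lambda)$ are polynomials in $\lambda$, the function $q_r(\lambda)\isdef\det\bigl(\mu_r(\lambda)-M(\lambda)\bigr)$ is a polynomial in $\lambda$ that vanishes at the infinitely many points $\lambda(w,r)$, so $q_r\equiv0$. Therefore each $\mu_r(\lambda)$ is a root, in $\CC(\lambda)[\mu]$, of the monic degree-$(\ell+1)$ polynomial $\det(\mu-M(\lambda))$. The $\mu_r(\lambda)$ are $\ell+1$ pairwise distinct elements of $\CC(\lambda)$, because their leading coefficients $m+\ell-r$ are pairwise distinct, and a monic polynomial of degree $\ell+1$ with $\ell+1$ distinct roots factors completely. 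This yields $\det(\mu-M(\lambda))=\prod_{r=0}^{\ell}(\mu-\mu_r(\lambda))$ in $\CC(\lambda)[\mu]$, and since both sides have coefficients in $\CC[\lambda]$ the identity holds for every $\lambda$.

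For the geometric multiplicities and the claim $v_0\ne0$ I would use the shape of $M(\lambda)$ rather than its spectrum. From Propositions \ref{D} and \ref{E} one sees that $M_0-M_1$ is strictly subdiagonal, that $P_1-P_0$, $U-C$ and $U-C+1$ are lower bidiagonal, and that $V+\lambda$ and $U+V+\lambda$ are upper bidiagonal. Consequently $(U-C)^{-1}(V+\lambda)$ and $(U-C+1)^{-1}(U+V+\lambda)$ are lower Hessenberg, and multiplying out shows that each of the three summands of $M(\lambda)$, and hence $M(\lambda)$ itself, is lower Hessenberg. Moreover the superdiagonal entries $M(\lambda)_{s,s+1}$ turn out to be independent of $\lambda$ (the variable $\lambda$ enters only on diagonals) and nonzero for $0\le s\le\ell-1$. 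A lower Hessenberg matrix with nonvanishing superdiagonal is nonderogatory: writing the $s$-th equation of $(M(\lambda)-\mu)v=0$ as $M(\lambda)_{s,s+1}v_{s+1}=\mu v_s-\sum_{j\le s}M(\lambda)_{sj}v_j$, one sees that $v_0$ determines $v_1,\dots,v_\ell$ recursively. Hence every eigenspace is one-dimensional, and $v_0=0$ forces $v=0$; equivalently every nonzero eigenvector has $v_0\ne0$.

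The hard part is the last structural assertion, namely that every superdiagonal entry $M(\lambda)_{s,s+1}$ is nonzero. This is a single $\lambda$-free computation combining the subdiagonal of $M_0-M_1$, the diagonal of $P_1-P_0$, the superdiagonal of $V$, and the reciprocals of the diagonal of $U-C$; the corner entry $M(\lambda)_{0,1}=-k\ell(n-k)(\ell+n)/n$ already shows both the typical form of the answer and why the standing hypotheses $1\le k\le n-1$, $n\ge2$ and $\ell\ge1$ force non-vanishing. Checking that no interior superdiagonal entry degenerates for special values of the integer parameters is the only genuinely delicate point of the argument.
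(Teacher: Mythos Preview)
Your argument is correct and follows essentially the same route as the paper: deduce the roots of $\det(\mu-M(\lambda))$ from the infinite supply of eigenpairs $(\lambda(w,r),\mu(w,r))$ furnished by the spherical functions, and then exploit the lower Hessenberg shape of $M(\lambda)$ with nonvanishing, $\lambda$-independent superdiagonal to obtain one-dimensional eigenspaces and $v_0\ne0$. The paper closes the one point you flag as delicate by writing down an explicit closed formula for every superdiagonal entry of $M(\lambda)$, so you may wish to compare your sample value $M(\lambda)_{0,1}$ against it.
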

\begin{proof}
Let us consider the polynomial $p\in\CC[\lambda,\mu]$ defined by
$p(\lambda,\mu)=\det(\mu-M(\lambda))$. For each integer $r$ such
that $0\le r\le\ell$ let $\lambda(w,r)=-w(w+m+\ell+r+n)-r(m+r-k+n)$.
Then from \eqref{autovalores2} and \eqref{autovalores3} we
have
$$p(\lambda(w,r),\mu_r(\lambda(w,r))=0,$$ for all $w\in\ZZ$
such that $0\le w$ and $0\le w+m+r$. Since there are infinitely many
such $w$, the polynomial function $w\mapsto
p(\lambda(w,r),\mu_r(\lambda(w,r)))$ is identically zero on $\CC$.
Hence, given $r$ $(0\le r\le\ell)$, we have
$p(\lambda,\mu_r(\lambda))=0$ for all $\lambda\in\CC$.

Now $\Lambda=\{\lambda\in\CC:\mu_r(\lambda)=\mu_{r'}(\lambda) \text{
for some } 0\le r<r'\le\ell\}$ is a finite set, in fact
$\mid\Lambda\mid\le\ell(\ell+1)/2$. Since for any $\lambda$,
$p(\lambda,\mu)$ is a monic polynomial in $\mu$ of degree $\ell+1$,
it follows that if $\lambda\in\CC-\Lambda$, then
\begin{equation}\label{p}
p(\lambda,\mu)=\prod_{r=0}^\ell(\mu-\mu_r(\lambda))
\end{equation}
for all $\mu\in\CC$.
Now it is clear that \eqref{p} holds for all $\lambda$ and all
$\mu$, which completes the proof of the first assertion.

To prove the last two statements of the proposition we point out
that the matrix $M(\lambda)=A+B$, where $A$ is a lower triangular
matrix and
$$B=\sum_{s=0}^{\ell-1}
\frac{(n+s-1)(n+s+\ell)(s+k)}{(n+2s-1)(n+2s)}E_{s,s+1}.$$ Therefore
if $v=(v_0,\dots,v_\ell)^t$ is a $\mu$-eigenvector of $M(\lambda)$,
then $v_0$ determines $v$, because the coefficients of $B$ are not
zero, and this implies that the geometric multiplicity of $\mu$ is
one and that $v_0\ne0$. \qed
\end{proof}

\medskip
In this way we have proved the following theorem.

\begin{thm}
The simultaneous  $\CC^{\ell+1}$-valued eigenfunctions of the
differential operators $D$ and $E$ analytic on the open unit disc $\Omega$ are the
scalar multiples of
\begin{equation*}
F_r(u)={}_2\!H_1 \!\left( \begin{smallmatrix} U\,;\,V+\lambda\\
U-C\end{smallmatrix} ; u\right)F_{r}(0),
\end{equation*}
where  $F_{r}(0)=(v_0,...,v_\ell)^t$ is the unique $\mu_r(\lambda)$-eigenvector of
$M(\lambda)$ normalized by
$v_0=1$, for some $0\le r\le\ell$ and $\lambda\in\CC$. Notice that
$$DF_r=\lambda F_r\quad \text{  and } \quad EF_r=\mu_r(\lambda)F_r.$$
\end{thm}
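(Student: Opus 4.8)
The plan is to assemble the pieces already in place: the isomorphism $\nu\colon V_\lambda\to\CC^{\ell+1}$, the matrix $M(\lambda)$ through which $E$ acts on $V_\lambda$, and the spectral data of Proposition \ref{caracteristico}. First I would take an arbitrary function $F$ analytic on $\Omega$ that is a simultaneous eigenfunction, say $DF=\lambda F$ and $EF=\mu F$. The first equation puts $F$ in $V_\lambda$, and, writing $D$ as in \eqref{DD,EE}, it becomes
\begin{equation*}
u(1-u)F''+(U-C-uU)F'-(V+\lambda)F=0,
\end{equation*}
which I recognize as the matrix hypergeometric equation \eqref{hyper1} with parameters $U-C$, $U$, and $V+\lambda$ in place of $C$, $U$, and $V$. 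I would then check that $U-C$ is lower triangular with diagonal entries $n+2s$ ($0\le s\le\ell$), so that its eigenvalues, being positive for $n\ge2$, avoid $-\NN_0$; the uniqueness statement for ${}_2\!H_1$ then gives
\begin{equation*}
F(u)={}_2\!H_1\!\left(\begin{smallmatrix} U\,;\,V+\lambda\\ U-C\end{smallmatrix};u\right)F(0).
\end{equation*}
This simultaneously confirms that $\nu$ is an isomorphism and displays the explicit form of every element of $V_\lambda$.

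Next I would invoke the commutative diagram relating $E$ on $V_\lambda$ to $M(\lambda)$ on $\CC^{\ell+1}$: it says $\nu(EF)=M(\lambda)\,\nu(F)$, so for $F\in V_\lambda$ the condition $EF=\mu F$ is equivalent to $M(\lambda)F(0)=\mu F(0)$. In other words, the simultaneous eigenfunctions in $V_\lambda$ correspond bijectively, via $\nu$, to the eigenvectors of $M(\lambda)$. Proposition \ref{caracteristico} then supplies everything needed: the eigenvalues of $M(\lambda)$ are exactly the $\mu_r(\lambda)$ with $0\le r\le\ell$, each eigenspace is one-dimensional, and every nonzero eigenvector $(v_0,\dots,v_\ell)^t$ has $v_0\ne0$. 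Hence each eigenspace has a unique representative $F_r(0)$ normalized by $v_0=1$, and the associated eigenfunction is the ${}_2\!H_1$ above with this initial value; every simultaneous eigenfunction is therefore a scalar multiple of one of the $F_r$, and the relations $DF_r=\lambda F_r$, $EF_r=\mu_r(\lambda)F_r$ hold by construction.

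The substantive work has already been done in Proposition \ref{caracteristico}---the characteristic polynomial of $M(\lambda)$, the geometric multiplicity one, and the fact that $v_0\ne0$---so the remaining argument is essentially bookkeeping. The one step I would carry out with care is the identification of $DF=\lambda F$ with the hypergeometric equation \eqref{hyper1}: I must match the parameters correctly and verify that the spectrum $\{n+2s:0\le s\le\ell\}$ of $U-C$ misses $-\NN_0$, since this is precisely what guarantees both the bijectivity of $\nu$ and the uniqueness of the analytic solution, and hence that the listed $F_r$ exhaust all simultaneous eigenfunctions up to scalars.
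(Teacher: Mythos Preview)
Your proposal is correct and follows exactly the route the paper takes: the paper presents this theorem without a separate proof, writing simply ``In this way we have proved the following theorem'' after setting up $V_\lambda$, the isomorphism $\nu$, the commutative diagram with $M(\lambda)$, and Proposition~\ref{caracteristico}. You have written out precisely that implicit argument, and in fact you supply a detail the paper omits---the verification that $U-C$ is lower triangular with diagonal entries $n+2s\ge n\ge2$, so its spectrum avoids $-\NN_0$ and the ${}_2\!H_1$ machinery applies.
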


\subsection{Spherical functions}
\mbox{} \\
We recall that we were interested to determine the irreducible spherical functions of $G=\SU(n+1)$ whose $K$-type ($K=\U(n)$) is of the form
$$ \kk=(\underbrace{m+\ell,\dots,m+\ell}_k,\underbrace{ m,\dots,m}_{n-k}),$$
by giving  an explicit expression of their restriction to the one-dimensional abelian subgroup $A$ introduced in \eqref{atheta}.
At this point we have established the following characterization.

\begin{thm}\label{caracterizacion}
There is a bijective correspondence between the equivalence classes
of all irreducible spherical functions of  $\SU(n+1)$ of a $K$-type
$\kk$ as in \eqref{onestep1}, and the set of pairs
$(\lambda(w,r),\mu(w,r))\in\CC\times\CC$ where
\begin{equation*}
\begin{split}
\lambda(w,r)&=-w(w+m+\ell+r+n)-r(m+r-k+n),\\
\mu(w,r)&=-w (m+\ell - r) (w + m + \ell + r + n)-r (m - k) (m + r - k +
n),
\end{split}
\end{equation*}
with  $w$ a nonnegative integer, $0\le r\le\ell$ and $0\le w+m+r$.
In particular $\Phi^{\mm(w,r),\kk}$ can be obtained explicitly from
the following vector-valued function
\begin{equation*}\label{final2}
F_{w,r}(u)={}_2\!H_1 \left( \begin{smallmatrix} U\,;\,V+\lambda(w,r)\\
U-C\end{smallmatrix} ; u\right)F_{w,r}(0),
\end{equation*}
where $F_{w,r}(0)$ is the unique $\mu(w,r)$-eigenvector of
$M(\lambda(w,r))$ such that $F_{w,r}(0)=(1, v_1,\dots,v_\ell)^t$.
\end{thm}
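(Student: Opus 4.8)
The plan is to assemble the bijection from the pieces already in place, indexing the spherical functions by their representations and matching these to eigenvalue pairs. First I would check that the family $\{\Phi^{\mm(w,r),\kk}:(w,r)\in S\}$, with $S=\{(w,r):\,w\in\ZZ,\ w\ge0,\ 0\le r\le\ell,\ 0\le w+m+r\}$, exhausts all irreducible spherical functions of type $\kk$. Since $G$ is compact, each such function arises from an irreducible representation $\mm$ of $G$ with $\kk\subset\mm|_K$, and the multiplicity-one property makes it $\Phi^{\mm,\kk}$. Lifting $\mm$ to $\U(n+1)$ with the normalization $s_\mm=s_\kk$ (Remark \ref{Ktipo}) and imposing the interlacing $m_i\ge k_i\ge m_{i+1}$ forces $m_2=\cdots=m_k=m+\ell$ and $m_{k+2}=\cdots=m_n=m$ (vacuous ranges when $k=1$ or $k=n-1$), leaves $m_1=m+\ell+w$ with $w\ge0$ and $m_{k+1}=m+r$ with $0\le r\le\ell$ free, and then $s_\mm=s_\kk$ gives $m_{n+1}=-(w+r)$; the interlacing bound $m=k_n\ge m_{n+1}$ is precisely $0\le w+m+r$. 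Hence $\mm=\mm(w,r)$ with $(w,r)\in S$, and these are exactly the admissible representations.

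Given this, the assignment $\Phi^{\mm(w,r),\kk}\mapsto(\lambda(w,r),\mu(w,r))$ is well defined and lands on the displayed set of pairs by Proposition \ref{autovalores1}, and it is onto that set because every $(w,r)\in S$ produces the irreducible spherical function $\Phi^{\mm(w,r),\kk}$ of type $\kk$. For the explicit description I would first record the identity $\mu_r(\lambda(w,r))=\mu(w,r)$, a one-line substitution into the formula for $\mu_r(\lambda)$ in Proposition \ref{caracteristico}. By Theorem \ref{asintotica} the function $F^{\mm(w,r),\kk}$ is analytic on $\Omega$, so it lies in $V_{\lambda(w,r)}$ and satisfies $EF^{\mm(w,r),\kk}=\mu(w,r)F^{\mm(w,r),\kk}$; through the isomorphism $\nu$ and the commutative diagram its value at $0$ is a $\mu(w,r)$-eigenvector of $M(\lambda(w,r))$. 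Proposition \ref{caracteristico} says this eigenspace is one-dimensional with nonzero first coordinate, so normalizing that coordinate to $1$ and feeding the result into the matrix hypergeometric formula of the theorem preceding this one yields exactly $F_{w,r}$.

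The one genuinely delicate point, and the step I expect to be the main obstacle, is injectivity: that distinct classes yield distinct pairs, equivalently that $(w,r)\mapsto(\lambda(w,r),\mu(w,r))$ is injective on $S$. A direct attack comparing the two quadratic expressions invites accidental coincidences, since $\mu_r(\lambda)=\mu_{r'}(\lambda)$ can occur for $r\ne r'$ at isolated values of $\lambda$, so instead I would argue structurally. Suppose $(\lambda(w,r),\mu(w,r))=(\lambda(w',r'),\mu(w',r'))$ and write $\lambda,\mu$ for the common values. Then $F^{\mm(w,r),\kk}$ and $F^{\mm(w',r'),\kk}$ are analytic simultaneous eigenfunctions of $D$ and $E$ with the same eigenvalues, so both initial values are $\mu$-eigenvectors of the single matrix $M(\lambda)$. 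As that eigenspace is one-dimensional (Proposition \ref{caracteristico}) and the functions are nonzero, after normalizing the first coordinate the two initial values coincide; since $\nu$ is an isomorphism the two functions are equal. By the one-to-one correspondence $\Phi^{\mm,\kk}\leftrightarrow F^{\mm,\kk}$ the spherical functions are then equivalent, whence $\mm(w,r)=\mm(w',r')$ by Proposition \ref{equivalencia} and $(w,r)=(w',r')$. This argument sidesteps the apparent accidental coincidences precisely because one-dimensionality of the eigenspaces forbids two inequivalent spherical functions from sharing an eigenvalue pair. Combining exhaustiveness, the eigenvalue assignment, and this injectivity establishes the bijection and the stated form of $F_{w,r}$.
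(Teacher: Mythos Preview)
Your argument is correct and is exactly the one the paper has in mind: the paper gives no separate proof of this theorem, simply stating ``In this way we have proved the following theorem'' after Proposition~\ref{caracteristico} and the preceding classification of simultaneous analytic eigenfunctions, and your write-up just makes that implicit assembly explicit. Your injectivity step---using the one-dimensionality of the $\mu$-eigenspace of $M(\lambda)$ to force the two $F$'s (hence the two $\Phi$'s) to coincide---is precisely the mechanism the paper intends, and indeed right after the theorem the paper remarks that the injectivity of $(w,r)\mapsto(\lambda(w,r),\mu(w,r))$ on $S$ is a \emph{consequence} of the theorem rather than an ingredient, with Proposition~\ref{Bezout} supplying an independent algebraic-geometric proof (via B\'ezout) only because a stronger real-parameter version is needed later.
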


From the above theorem it follows that the pair of eigenvalues
$[\Delta_P\Phi](e)$ and $[\Delta_Q\Phi](e)$ characterize the irreducible
spherical functions of type $\kk$ as in \eqref{onestep1}. This suggests that
one should be able to prove that the algebra $D(G)^K$ is generated by
$\Delta_P$ and $\Delta_Q$ modulo the two-sided ideal generated by
the kernel in $D(K)$ of the representation $\dot\pi$ of highest weight $\kk$.

Another consequence is that the map
$(w,r)\mapsto\big(\lambda(w,r),\mu(w,r)\big)$ is one-to-one on the set
$$S=\{(w,r)\in \ZZ\times\ZZ:0\leq w, \,0\le r\le\ell,\,0\leq  m+w+r\},$$
 which is not easy to see from the formulas. We will need the following generalization.

 \begin{prop}\label{Bezout}
 Let $(w,r)\in S$ and let $(d,s)\in \RR\times\RR$ with $d,s>-1$. If $\big(\lambda(w,r),\mu(w,r)\big)=\big(\lambda(d,s),\mu(d,s)\big)$, then $(w,r)=(d,s)$.
 \end{prop}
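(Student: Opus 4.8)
The plan is to strip the quadratic dependence on the ``radial'' variable out of both eigenvalues, reduce the two hypotheses to two \emph{linear} relations, and then let the inequalities defining $S$ (together with $d,s>-1$) force first $r=s$ and then $w=d$.

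First I would introduce the shorthand $A(d,s)=d(d+m+\ell+s+n)$ and $B(s)=s(m+s-k+n)$, so that
$$\lambda(d,s)=-A(d,s)-B(s),\qquad \mu(d,s)=-(m+\ell-s)A(d,s)-(m-k)B(s),$$
and likewise for $(w,r)$; set $A_1=A(w,r)$, $B_1=B(r)$, $A_2=A(d,s)$, $B_2=B(s)$. The hypothesis $\lambda(w,r)=\lambda(d,s)$ reads $A_1+B_1=A_2+B_2$, and subtracting $(m-k)$ times this relation from $\mu(w,r)=\mu(d,s)$ cancels the $B$-terms and yields
$$(\ell+k-r)\,A_1=(\ell+k-s)\,A_2 .$$
Together with $A_1-A_2=B(s)-B(r)=(s-r)(s+r+m-k+n)$, these are two linear equations in $A_1,A_2$ whose coefficient determinant is $r-s$.

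If $r\neq s$ I can invert this system explicitly, obtaining
$$A_1=(s-\ell-k)(s+r+m-k+n),\qquad A_2=(r-\ell-k)(s+r+m-k+n).$$
Ruling out this case is the main obstacle. I would read the first equation, with $A_1=w(w+m+\ell+r+n)$ a known constant, as a quadratic in $s$; it factors, one root being $s=w+\ell+k$ and the other, by Vieta, $s=(k-n)-(w+m+r)$. The latter is $\le -1$ since $k\le n-1$ and $w+m+r\ge 0$ for $(w,r)\in S$, so necessarily $s=w+\ell+k$. Substituting this value into $A_2=d(d+m+\ell+s+n)$ turns the expression for $A_2$ into a quadratic in $d$ whose roots are $d=r-\ell-k$ and $d=-(w+m+r+\ell+n)$; because $0\le r\le\ell$, $k\ge1$, $n\ge2$ and $w+m+r\ge0$, both are $\le -1$, contradicting $d>-1$. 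Hence $r=s$.

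Finally, with $r=s$ the equality $\lambda(w,r)=\lambda(d,r)$ reduces to $A(w,r)=A(d,r)$, i.e.\ $(w-d)\bigl(w+d+m+\ell+r+n\bigr)=0$; the second factor vanishes only for $d=-(w+m+r)-\ell-n\le -2$, again excluded by $d>-1$, so $w=d$ and therefore $(w,r)=(d,s)$. The first two steps are routine polynomial algebra; the crux of the argument is the factorization of the two quadratics in the $r\neq s$ case and the observation that every spurious root is pushed below $-1$ by exactly the inequalities encoded in $S$ and the standing bounds $n\ge2$, $1\le k\le n-1$.
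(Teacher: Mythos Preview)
Your proof is correct and takes a genuinely different route from the paper's. The paper argues geometrically: it views the conditions $\lambda(x,y)=\lambda(w,r)$ and $\mu(x,y)=\mu(w,r)$ as plane curves $F=0$ (degree $2$) and $G=0$ (degree $3$), checks that $F$ is irreducible and shares no component with $G$, invokes B\'ezout's theorem to conclude there are at most six intersection points, exhibits six of them explicitly, and then verifies that only $(w,r)$ lies in the region $x,y>-1$.

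Your approach is purely algebraic and more constructive: the linear combination $\mu-(m-k)\lambda$ kills the $B$-terms and produces, together with $A_1-A_2=B(s)-B(r)$, a $2\times2$ linear system in $A_1,A_2$; when $r\ne s$ you solve it and factor the resulting quadratics in $s$ and in $d$ by inspection. In effect you \emph{derive} the six intersection points the paper simply writes down (your values $s\in\{w+\ell+k,\;k-n-(w+m+r)\}$ and $d\in\{r-\ell-k,\;-(w+m+r+\ell+n)\}$, together with the $r=s$ branch, are exactly the coordinates appearing in the paper's list). The gain is that you avoid any appeal to B\'ezout or to the irreducibility of the conic, making the argument entirely elementary; the paper's gain is conceptual economy---once the six points are known, B\'ezout guarantees there are no others without further computation. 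Both arguments end the same way, using $k\le n-1$, $r\le\ell$, $n\ge2$, and $w+m+r\ge0$ to push every spurious root to $\le -1$.
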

 \begin{proof}
 Let us consider the algebraic plane curves
 \begin{equation*}
 \begin{split}
 F(x,y)&=x(x+m+\ell+y+n)+y(m+y-k+n)+\lambda,\\
 G(x,y)&=x(m+\ell-y)(x+m+\ell+y+n)+y(m-k)(m+y-k+n)+\mu,
\end{split}
 \end{equation*}
 where we put $\lambda=\lambda(w,r)$ and $\mu=\mu(w,r)$. Then it is easy to check that $F$ is irreducible and that $F$ and $G$ have no common component.

 Now one can see that both curves meet at the following points:
 $$(w, r), (w,-r-w-n+k-m), (-w-m-r-\ell-n,r),$$
$$(-w-m-r-\ell-n, k+\ell+w), (-\ell+r-k,k+\ell+w), (-\ell+r-k,-r-w-n+k-m).$$
These are six different points because $(w,r)\in S$ and $1\le k\le n-1$. Therefore by Bezout's theorem, see \cite{F}, these are the only points of intersection of $F$ and $G$. But the only point of intersection with $x>-1$ and $y>-1$ is $(w,r)$. This completes the proof of the proposition. \qed
 \end{proof}

 \subsection{The orthogonal functions $F_{w,r}$}  \mbox{} \\
 The aim of this subsection is to prove that the functions $F=F_{w,r}$ associated to the spherical functions $\Phi^{\mm(w,r), \kk}$ are polynomial functions that are  orthogonal with respect to a certain inner product.

\smallskip
Let us equip $V_\kk$ with a $K$-invariant inner product. Then the
$L^2$-inner product of two continuous functions $\Phi_1$ and
$\Phi_2$ with values in $V_\kk$ is
$$
\langle\Phi_1,\Phi_2\rangle=\int_G\tr(\Phi_1(g)\Phi_2(g)^*)\,dg,
$$
where $dg$ denotes the normalized Haar measure of $G$ and
$\Phi_2(g)^*$ denotes the adjoint of $\Phi_2(g)$.

In particular if $\Phi_1$ and $\Phi_2$ are two irreducible spherical
functions of type $\kk$, we write as above $\Phi_1=H_1\Phi_\kk$ and
$\Phi_2=H_2\Phi_\kk$. Then by using the integral formula given in
Corollary 5.12, p. 191 in \cite{He} we obtain
\begin{equation}\label{Hilbert}
\langle\Phi_1,\Phi_2\rangle=\int_0^1H_2^*(u)V(u)H_1(u)\,du
\end{equation}
where $V(u)$ is the weight matrix
$$V(u)=2n\sum_{r=0}^\ell \textstyle\binom{\ell+k-r-1}{\ell-r}\binom{n-k+r-1}{r}
 (1-u)^{m+\ell-r}u^{n-1}E_{rr}.$$

In Theorem \ref{D} we introduced the function $\Psi$
\begin{equation*}\label{Psi}
\Psi(u)=XT(u),\quad  \quad T(u)=\sum_{i=0}^\ell u^i E_{ii} \quad\text{ and } \quad  X=\sum_{i,j} \textstyle\binom{i}{j}E_{ij},
\end{equation*}
to conjugate the differential operator $D$ to a hypergeometric  operator $\tilde D$.

Therefore in terms of the functions $F_1=\Psi^{-1}H_1$ and
$F_2=\Psi^{-1}H_2$ we have
\begin{equation*}
\langle F_1,F_2\rangle_W=\int_0^1 F_2(u)^*W(u)F_1(u)\,du,
\end{equation*}
where the weight matrix $W(u)=\Psi^*(u)V(u)\Psi(u)$ is given by
\begin{equation*}\label{peso}
W(u)=\sum_{i,j=0}^\ell\left( \sum_{r=0}^\ell \textstyle \binom ri
\binom rj \binom{\ell+k-r-1}{\ell-r}\binom{n-k+r-1}{r}
 (1-u)^{m+\ell-r}u^{i+j+n-1}\right) E_{ij}.
 \end{equation*}

We point out that the weight function $W(u)$ has finite moments of
all orders if and only if $m\ge0$. See \cite{PT2} for some details.

\begin{prop}\label{ortogonalidad}
  The functions $F_{w,r}$ associated to irreducible spherical
functions $\Phi^{\mm(w,r), \kk}$ of type $\kk$ are orthogonal with respect to the weight
function $W(u)$, that is
\begin{equation*}
\langle F_{w,r}\,,F_{w',r'}\rangle_W=0,\quad\text{if}\quad
(w,r)\ne(w',r').
\end{equation*}
\end{prop}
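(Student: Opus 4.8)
The plan is to deduce the $W$-orthogonality of the functions $F_{w,r}$ from the $L^2(G)$-orthogonality of the spherical functions $\Phi^{\mm(w,r),\kk}$ themselves, which in turn follows from the Schur orthogonality relations on the compact group $G$. First I would record the identity relating the two inner products. Writing $\Phi_{w,r}=\Phi^{\mm(w,r),\kk}=H_{w,r}\Phi_\kk$ and $F_{w,r}=\Psi^{-1}H_{w,r}$, the substitution $H_i=\Psi F_i$ together with $W(u)=\Psi^*(u)V(u)\Psi(u)$ turns \eqref{Hilbert} into
\[
\langle F_{w,r},F_{w',r'}\rangle_W=\int_0^1 F_{w',r'}(u)^*W(u)F_{w,r}(u)\,du=\langle \Phi_{w,r},\Phi_{w',r'}\rangle .
\]
Hence it suffices to prove that $\langle \Phi_{w,r},\Phi_{w',r'}\rangle=0$ whenever $(w,r)\ne(w',r')$. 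Note that this reduction also settles the convergence question (irrespective of the sign of $m$), since $\Phi_{w,r}$ and $\Phi_{w',r'}$ are continuous on the compact group $G$, so the trace integral converges absolutely.

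The second step is to check that distinct parameters $(w,r)\in S$ give rise to inequivalent irreducible representations of $G$. A direct computation shows that the sum of the entries of $\mm(w,r)$ equals $s_{\mm(w,r)}=k(m+\ell)+(n-k)m=s_\kk$, independently of $(w,r)$. Since two representations $\mm,\mm'$ of $\U(n+1)$ restrict to equivalent representations of $G=\SU(n+1)$ if and only if $\mm'=\mm+j(1,\dots,1)$ for some $j\in\ZZ$, and since any such $j$ would force $(n+1)j=s_{\mm'}-s_\mm=0$, we conclude that $\mm(w',r')=\mm(w,r)$ as $\U(n+1)$-weights; comparing the $(k+1)$-st and the last entries then gives $r=r'$ and $w=w'$. (Equivalently, one may invoke Proposition \ref{equivalencia} for the common $K$-type $\kk$.) Thus the representations $U_\mm$ with $\mm=\mm(w,r)$ are pairwise inequivalent for distinct $(w,r)\in S$.

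Finally I would invoke Schur orthogonality. Fixing the $K$-invariant inner product on $V_\kk\subset V_\mm$ and extending it to a $G$-invariant inner product on $V_\mm$, the function $\Phi^{\mm,\kk}(g)=P(\kk)\,U_\mm(g)|_{V_\kk}$ has entries that are matrix coefficients of the irreducible unitary representation $U_\mm$ of $G$. Hence $\tr\big(\Phi_{w,r}(g)\Phi_{w',r'}(g)^*\big)$ is a finite sum of terms of the form $(\Phi_{w,r})_{ij}(g)\,\overline{(\Phi_{w',r'})_{ij}(g)}$, each a matrix coefficient of $U_{\mm(w,r)}$ against a conjugated matrix coefficient of $U_{\mm(w',r')}$. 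By the Peter--Weyl orthogonality relations for matrix coefficients of inequivalent irreducible representations, every such integral over $G$ vanishes, so $\langle \Phi_{w,r},\Phi_{w',r'}\rangle=0$ and therefore $\langle F_{w,r},F_{w',r'}\rangle_W=0$.

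The only genuinely substantive point is the inequivalence established in the second step (the observation $s_{\mm(w,r)}=s_\kk$); everything else is bookkeeping or a direct appeal to Peter--Weyl. As an alternative, one could argue analytically by checking that the operators $\tilde D$ and $\tilde E$ are symmetric with respect to $\langle\cdot,\cdot\rangle_W$ and then separating the simultaneous eigenfunctions using the injectivity of $(w,r)\mapsto(\lambda(w,r),\mu(w,r))$ on $S$ furnished by Proposition \ref{Bezout}; however, the route through Schur orthogonality is shorter and avoids verifying the symmetry equations for the matrix coefficients of $\tilde D$ and $\tilde E$.
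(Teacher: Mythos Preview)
Your argument is correct. The paper's own proof, however, takes precisely the ``alternative'' route you sketch at the end: it observes that $\Delta_P$ and $\Delta_Q$ are symmetric on $L^2(G)$, deduces that $D,E$ (and hence $\tilde D,\tilde E$) are symmetric for $\langle\,\cdot\,,\,\cdot\,\rangle_W$, and then separates the $F_{w,r}$ using the fact (Theorem~\ref{caracterizacion}) that the eigenvalue pair $(\lambda(w,r),\mu(w,r))$ determines $(w,r)$.

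So your proof is genuinely different. You bypass the differential operators entirely and rely instead on Peter--Weyl for the compact group $G$, together with the elementary observation that $s_{\mm(w,r)}=s_\kk$ forces distinct parameters $(w,r)\in S$ to give inequivalent $G$-representations. This is shorter and uses less machinery specific to this paper (no need for Theorem~\ref{caracterizacion} or Proposition~\ref{Bezout}). On the other hand, the paper's approach has the side benefit of recording the symmetry of $\tilde D$ and $\tilde E$ with respect to $W$, which is reused immediately afterwards in the proof of Theorem~\ref{esfpolinom} (where one needs that $D$ restricts to a symmetric operator on each ${\mathcal V}_j$). So while your route is cleaner for the orthogonality statement in isolation, the paper's route is doing double duty for the subsequent development.
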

\begin{proof} The differential operators $\Delta_P$ and $\Delta_Q$ are
symmetric with respect to the $L^2$-inner product, among matrix-valued functions on $G$. Therefore the differential operators
$D$ and $E$ are symmetric with respect to the inner product
\eqref{Hilbert}, among vector-valued functions on the interval
$[0,1]$. This implies that $\tilde D$ and $\tilde E$ are symmetric with respect to the inner product defined by the weight matrix  $W(u)$.

Since  the pairs $(\lambda,\mu)$ of eigenvalues of the symmetric
operators $\tilde D$ and $\tilde E$ characterize these $F$'s, see  Theorem \ref{caracterizacion}, it follows that
the $F$'s associated to irreducible spherical
functions of type $\kk$ are orthogonal with respect to this inner product.
\qed
\end{proof}

\medskip
Let $L^2_W$ denote the Hilbert space of all $\CC^{\ell+1}$-valued
functions on $[0,1]$, squared integrable with respect to the inner product \eqref{Hilbert}. Also let $\CC^{\ell+1}[u]$ be
the $\CC^{\ell+1}$-valued polynomial  functions.

Let us consider the following linear space,
\begin{equation}\label{V}
\mathcal{V}=\{P\in\CC^{\ell+1}[u]:(\Psi P)_s=(1-u)^{s-m-\ell}g_s,\,\, g_s\in\CC[u], m+\ell+1\leq s\leq \ell\}.
\end{equation}

\noindent Observe that $\mathcal{V}=\CC^{\ell+1}[u]$ if and only if $m\ge0$.

\begin{lem}
 $\mathcal{V}$ is a linear subspace of $L^2_W.$
 Moreover $\mathcal V$ is stable under the differential operator $D$.
\end{lem}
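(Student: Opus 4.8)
The plan is to treat the two claims in turn. Since the defining conditions in \eqref{V} are linear, $\mathcal V$ is automatically a linear space, and when $m\ge0$ one has $\mathcal V=\CC^{\ell+1}[u]$, so both assertions are trivial in that case and all the content lies in the range $m<0$. For the inclusion $\mathcal V\subset L^2_W$, I would use $W=\Psi^*V\Psi$ together with $H=\Psi P$ to rewrite $\langle P,P\rangle_W=\int_0^1H(u)^*V(u)H(u)\,du$, which by the diagonal form of $V$ reduces integrability to that of the scalar function
$$\sum_{r=0}^\ell c_r\,|h_r(u)|^2(1-u)^{m+\ell-r}u^{n-1},\qquad c_r=\textstyle\binom{\ell+k-r-1}{\ell-r}\binom{n-k+r-1}{r}>0,$$
on the interval $[0,1]$.

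The endpoints are then handled separately. Near $u=0$ the factor $u^{n-1}$ with $n\ge2$ guarantees convergence for every term. Near $u=1$ the terms with $0\le r\le m+\ell$ are harmless because $m+\ell-r\ge0$, and the only delicate terms, those with $m+\ell+1\le r\le\ell$, are controlled precisely by the defining condition of $\mathcal V$: writing $h_r(u)=(1-u)^{r-m-\ell}g_r(u)$ with $g_r$ polynomial gives $|h_r(u)|^2(1-u)^{m+\ell-r}=(1-u)^{\,r-m-\ell}|g_r(u)|^2$, a strictly positive power of $(1-u)$, hence integrable. This establishes the first claim.

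For stability under $D$, I first observe that the operator $D$ of \eqref{DD,EE} has polynomial coefficients, so $DP\in\CC^{\ell+1}[u]$ and, since $\Psi$ is polynomial, each component $(\Psi\,DP)_s$ is a polynomial in $u$. To see the required vanishing I would invoke the conjugation identity $\Psi\,(DP)=-D_H(\Psi P)=-D_HH$ coming from Proposition \ref{D}, where $D_H$ denotes the operator of \eqref{operadorD1}; this replaces the entangled operator $\Psi D\Psi^{-1}$ by the explicit tridiagonal $D_H$ acting on $H=\Psi P$. Fixing $s$ with $m+\ell+1\le s\le\ell$, setting $p=s-m-\ell\ge1$ and $h_s(t)=t^{p}g_s(t)$ in the variable $t=1-u$, I would expand
$$(D_HH)_s=-\Big(t(1-t)h_s''+\big(1-p-t(n+1-p)\big)h_s'+\tfrac1{1-t}\big[(\ell-s)(n+s-k)(h_{s+1}-h_s)+t\,s(\ell-s+k)(h_{s-1}-h_s)\big]\Big)$$
and determine the order of vanishing at $t=0$ term by term.

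The crux of the matter, and the step I expect to require the most care, is an exact cancellation in the two diagonal derivative terms. Taken individually, $t(1-t)h_s''$ and the constant part $(m+\ell-s+1)h_s'=(1-p)h_s'$ each vanish only to order $p-1$, with leading $t^{p-1}$ coefficients $p(p-1)g_s(0)$ and $(1-p)p\,g_s(0)$; these cancel exactly because $m+\ell-s+1=1-p$, so the diagonal part in fact vanishes to order $p$. The off-diagonal and zeroth-order pieces are milder: $(\ell-s)(n+s-k)h_{s+1}$ and $t\,s(\ell-s+k)h_s$ vanish to order $p+1$, while $(\ell-s)(n+s-k)h_s$ and $t\,s(\ell-s+k)h_{s-1}$ vanish to order $p$, the latter using that $h_{s-1}$ is a regular component when $p=1$ and satisfies $h_{s-1}\sim t^{p-1}$ when $p\ge2$. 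Hence $(D_HH)_s$ vanishes to order at least $p=s-m-\ell$ at $t=0$; being also a polynomial in $u$ by the identity above, it is divisible by $(1-u)^{s-m-\ell}$, which is exactly the condition placing $DP$ in $\mathcal V$. This proves stability.
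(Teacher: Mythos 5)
Your proposal is correct, and it actually does more than the paper: for the first claim ($\mathcal V\subset L^2_W$) your argument is essentially identical to the paper's --- both reduce $\|P\|_W^2$ via $W=\Psi^*V\Psi$ and $H=\Psi P$ to scalar integrals $\int_0^1 c_s\,|h_s|^2u^{n-1}(1-u)^{m+\ell-s}\,du$, with the negative exponents for $m+\ell+1\le s\le\ell$ neutralized by $h_s=(1-u)^{s-m-\ell}g_s$, yielding the integrable power $(1-u)^{s-m-\ell}$. For the second claim, however, the paper writes only ``the proof of the last statement is left to the interested reader,'' so your argument fills a genuine gap in the text. Your route --- conjugating $D$ back through $\Psi$ to the explicit operator \eqref{operadorD1}, so that membership of $DP$ in $\mathcal V$ becomes the statement that $(D_tH)_s$ vanishes to order $p=s-m-\ell$ at $t=0$ --- is the natural one, and the computation checks out: the only terms of order $p-1$ are $t(1-t)h_s''$ and $(m+\ell-s+1)h_s'$, with leading coefficients $p(p-1)g_s(0)$ and $(1-p)p\,g_s(0)$ that cancel precisely because the constant term of the first-order coefficient is $1-p$; the remaining terms are of order at least $p$ (including $t\,s(\ell-s+k)h_{s-1}$, handled correctly in both cases $p=1$ and $p\ge2$); and the final step, using that $(\Psi\,DP)_s$ is a polynomial so that vanishing order at $t=0$ implies divisibility by $(1-u)^{s-m-\ell}$, is exactly what converts the local estimate into membership in $\mathcal V$. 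Two cosmetic points: when $p=1$ the two ``order $p-1$'' leading coefficients are each zero, so the phrase ``vanish only to order $p-1$'' is slightly imprecise there (the cancellation is trivial); and at $s=\ell$ the component $h_{s+1}$ does not exist, but its coefficient $(\ell-s)$ vanishes, which you implicitly use --- it would be worth saying so. Neither affects the validity of the proof.
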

\begin{proof}
Let $P\in\mathcal{V}$.
Then
\begin{align*}
\|P\|^2_W=&\int_0^1 P^*\Psi^*V(u)\Psi P\,du=\int_0^1(\Psi P)^*V(u)(\Psi P)\,du \\
=&\sum_{s=0}^\ell \int_0^1 c_s(\Psi P)^*_s(\Psi P)_su^{n-1}(1-u)^{m+\ell-s}\,du
\end{align*}
where $c_s=2n\binom{\ell+k-s-1}{\ell-s}\binom{n-k+s-1}{s}$.
\begin{align*}
\|P\|^2_W=&\sum_{s=0}^{m+\ell} \int_0^1 c_s |(\Psi P)_s|^2u^{n-1}(1-u)^{m+\ell-s}\,du\\
&+\sum_{s=m+\ell+1}^\ell \int_0^1 c_s u^{n-1}(1-u)^{s-m-\ell}|g_s(u)|^2\,du<\infty,
\end{align*}
because $\Psi $, $P$ and $g_s$ are polynomial functions.  Therefore $P\in L^2_W$. \\
The proof of the last statement is left to the interested reader.
\qed
\end{proof}

\bigskip
Let us consider the closure $\bar{\mathcal V}$  of the
subspace ${\mathcal V}$ in $L^2_W$.

\begin{prop}\label{Fenclausura}
Let $F$ be the $\CC^{\ell+1}$-valued function associated to an
irreducible spherical function $\Phi$ of type $\kk$. Then
$F\in\bar{\mathcal{V}}$.
\end{prop}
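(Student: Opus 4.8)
The plan is to prove the stronger assertion $F\in\mathcal{V}$, from which $F\in\bar{\mathcal{V}}$ is immediate. Recall that $F=\Psi^{-1}H$ and that, by Theorem~\ref{asintotica}, $H=\Psi F$ is a polynomial whose components satisfy $H_s=(1-u)^{s-m-\ell}g_s$ with $g_s\in\CC[u]$ for $m+\ell+1\le s\le\ell$. Writing $\Psi=XT$ with $T=\sum_{i=0}^\ell u^iE_{ii}$ and $X$ the Pascal matrix gives $F_s=u^{-s}(X^{-1}H)_s$, where each $(X^{-1}H)_s$ is a polynomial. Since $F$ is analytic at $u=0$ (Theorem~\ref{caracterizacion} exhibits it as a convergent ${}_2H_1$-series), the analyticity forces $(X^{-1}H)_s$ to vanish to order at least $s$ at $u=0$, so that $F_s$ is itself a polynomial and $F\in\CC^{\ell+1}[u]$. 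The remaining conditions defining $\mathcal{V}$ are exactly the vanishings $(\Psi F)_s=H_s=(1-u)^{s-m-\ell}g_s$ supplied by Theorem~\ref{asintotica}. Hence $F\in\mathcal{V}\subseteq\bar{\mathcal{V}}$.

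The conceptual crux, and where I expect the real work to lie, is the regularity of $F$ at $u=0$: this is not obvious from $F=\Psi^{-1}H$ alone, because $\Psi^{-1}=T^{-1}X^{-1}$ has a pole there. To make the cited analyticity self-contained I would run the Frobenius analysis of the regular singular point $u=0$ of the equation $DF=\lambda F$, with $D$ as in \eqref{DD,EE}. The indicial matrix is $\rho\bigl((\rho-1)I+(U-C)\bigr)$; since $U-C$ (from Proposition~\ref{D}) is lower triangular with diagonal entries $n+2s$ for $0\le s\le\ell$, the exponents are $\rho=0$, giving the $(\ell+1)$-dimensional space of solutions analytic at $u=0$, together with $\rho=1-n-2s<0$, giving solutions with a genuine pole. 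Because the spherical $F$ has analytic (indeed polynomial) image $H=\Psi F$, I must rule out every pole-solution. The leading Frobenius vector for the exponent $1-n-2s$ is an eigenvector of $U-C$ for the eigenvalue $n+2s$, hence is supported on the indices $\ge s$; a direct computation of $(\Psi F)_{s'}=\sum_j\binom{s'}{j}u^jF_j$ then shows that its lowest-order term is a nonzero multiple of $u^{1-n-s}$ for every $s'\ge s$, an uncancelled pole in $H$.

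Thus any nonzero pole-component of $F$ would produce a pole in $H$, and since distinct exponents $1-n-2s$ yield distinct pole orders in a fixed component $s'=\ell$, no mutual cancellation can occur; the analyticity of $H$ therefore kills all singular exponents, leaving $F$ regular at $u=0$. The main obstacle is precisely this bookkeeping — tracking the action of $\Psi$ on the leading eigenvectors of $U-C$ and confirming that the resulting poles neither cancel each other nor the analytic part. I note finally that the statement is phrased with the closure $\bar{\mathcal{V}}$ because that is the natural ambient space for the completeness results to follow; should one wish to bypass the Frobenius step and prove only $F\in\bar{\mathcal{V}}$, one can instead exploit that $\|F\|_W^2=\int_0^1 H^*VH\,du<\infty$ together with the degeneration of the weight like $u^{n-1}$ at $u=0$, so that the $u=0$ constraints defining $\mathcal{V}$ are invisible in the $L^2_W$-norm and can be met in the limit by correcting polynomial approximants with high powers of $(1-u)$.
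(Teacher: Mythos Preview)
Your first-paragraph argument is correct and in fact establishes the stronger statement $F\in\mathcal{V}$, which the paper obtains only afterwards as a corollary of Theorem~\ref{esfpolinom}. The paper's own proof of Proposition~\ref{Fenclausura} takes a different route: it expands $F$ as a Taylor series $\sum_j(1-u)^jF_j$ around $u=1$ (where $\Psi^{-1}$ is analytic), checks that each partial sum lies in $\mathcal{V}$ by matching the low-order $(1-u)$-coefficients of $(\Psi F)_s=h_s$ using Theorem~\ref{asintotica}, and concludes $F\in\bar{\mathcal{V}}$ as a limit. Your shortcut instead combines two facts already in hand at this point of the paper: $H$ is a polynomial (Theorem~\ref{asintotica}), so $F=T^{-1}X^{-1}H$ is rational with possible poles only at $u=0$; and $F$ is analytic at $u=0$ via the ${}_2H_1$ representation recorded in Theorem~\ref{caracterizacion}. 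Together these force $F\in\CC^{\ell+1}[u]$, after which the $\mathcal{V}$-condition $(\Psi F)_s=(1-u)^{s-m-\ell}g_s$ is literally the conclusion of Theorem~\ref{asintotica}. This line of argument also makes the paper's later orthogonal-basis proof of Theorem~\ref{esfpolinom} redundant. Consequently your Frobenius digression in the second and third paragraphs is unnecessary overhead: the regularity at $u=0$ that you propose to re-derive is precisely what the paper has already packaged into the hypergeometric formula, so there is no gap to fill. (A minor slip: the defining constraints of $\mathcal{V}$ are vanishing conditions at $u=1$, not $u=0$; the only issue at $u=0$ is polynomiality, which your first paragraph handles cleanly.)
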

\begin{proof}
The function $F=\Psi^{-1}H$ is analytic at $u=1$ because   $H$ and $\Psi^{-1}$
are analytic at $u=1$. Hence
$$F=\sum_{j=0}^\infty (1-u)^jF_j,\qquad \text{ } F_j\in \CC^{\ell+1}.$$

It is enough to prove that the partial sums $\sum_{j=0}^N(1-u)^jF_j$ are in $\mathcal{V}$, because $F$ is analytic on $[0,1]$. In other words,
if $m+\ell+1\leq s\leq \ell$ we need to show that
$\big(\Psi\sum_{j=0}^N(1-u)^jF_j\big)_s=(1-u)^{s-m-\ell} g_s$, for some polynomial $g_s$.

We can write $\Psi(u)=\sum_{0\leq k \leq
\ell}(1-u)^k\Psi_k,$ where the coefficients $\Psi_k$ are
$(\ell+1)\times(\ell+1)$ matrices. Then
\begin{equation*}
\begin{split}
(\Psi F)_s&=\sum_{j=0}^\infty (1-u)^j \sum_{k=0}^\ell (1-u)^k(\Psi_k F_j)_s =\sum_{r=0}^\infty (1-u)^{r}
\sum_{k=0}^{\min\{r,\ell\}}(\Psi_kF_{r-k})_s.
\end{split}
\end{equation*}
From Theorem \ref{asintotica} we have that $h_s= (\Psi F)_s$ is analytic in $[0,1]$ and it has a zero of order at least $s-m-\ell$ at $u=1$. Then we have
$$\sum_{k=0}^{\min\{r,\ell\}}(\Psi_k F_{r-k})_s=0,\text{\;\; for all}\;\;r<s-m-\ell.$$
Therefore
$$\Big(\Psi\sum_{j=0}^N(1-u)^jF_j\Big)_s=\sum_{r=s-m-\ell}^{N+\ell} (1-u)^{r}
 \sum_{k=0}^{\min\{r,\ell\}} (\Psi_k F_{r-k})_s,$$
and thus $\sum_{j=0}^N(1-u)^jF_j\in\mathcal{V}$. This completes the proof.
 \qed
\end{proof}

\begin{thm}\label{esfpolinom}
The function $F_{w,r}$ associated to the spherical function
$\Phi^{\mm(w,r),\kk}$ is a polynomial function.
\end{thm}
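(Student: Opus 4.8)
The plan is to upgrade the membership $F_{w,r}\in\bar{\mathcal V}$ from Proposition \ref{Fenclausura} to genuine membership in $\mathcal V$, that is, to show that the $L^2_W$-limit $F=F_{w,r}$ is in fact one of the polynomial elements of $\mathcal V$. The argument rests entirely on three facts already available: that $F$ is a $\lambda(w,r)$-eigenfunction of the operator $D$ of \eqref{DD,EE}, that this $D$ is symmetric for $\langle\,\cdot\,,\,\cdot\,\rangle_W$ and stable on $\mathcal V$, and that the space $V_\lambda$ of $\Omega$-analytic $\lambda$-eigenfunctions of $D$ has dimension $\ell+1$.

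First I would produce an orthogonal basis of $\bar{\mathcal V}$ consisting of polynomial eigenfunctions of $D$. Writing $\mathcal V_N=\{P\in\mathcal V:\deg P\le N\}$, the operator $D$ in \eqref{DD,EE} raises no degrees and stabilizes $\mathcal V$ (by the Lemma preceding Proposition \ref{Fenclausura}), so it preserves each finite-dimensional space $\mathcal V_N$. Since $D$ is symmetric and $\langle\,\cdot\,,\,\cdot\,\rangle_W$ is positive definite on $\mathcal V_N$, the successive orthogonal complements $W_N=\mathcal V_N\ominus\mathcal V_{N-1}$ are again $D$-invariant: for $v\in W_N$ one has $\langle Dv,\mathcal V_{N-1}\rangle_W=\langle v,D\mathcal V_{N-1}\rangle_W=0$ because $D\mathcal V_{N-1}\subseteq\mathcal V_{N-1}$. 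Diagonalizing the self-adjoint operator $D|_{W_N}$ on each finite-dimensional $W_N$ and collecting the resulting eigenvectors yields an orthogonal basis $\{Q_j\}$ of $\bar{\mathcal V}$ with $DQ_j=\lambda_jQ_j$ and every $Q_j\in\mathcal V$ a polynomial (the eigenvalues $\lambda_j$ are real, $D|_{W_N}$ being self-adjoint).

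Next I would expand $F$ in this basis and exploit the symmetry of $D$ established in the proof of Proposition \ref{ortogonalidad}. Since $DF=\lambda F$ with $\lambda=\lambda(w,r)$, for each $j$
\begin{equation*}
\lambda\,\langle F,Q_j\rangle_W=\langle DF,Q_j\rangle_W=\langle F,DQ_j\rangle_W=\lambda_j\,\langle F,Q_j\rangle_W,
\end{equation*}
whence $\langle F,Q_j\rangle_W=0$ whenever $\lambda_j\ne\lambda$. Therefore $F$ lies in the closed linear span of those $Q_j$ with $\lambda_j=\lambda$. But each such $Q_j$ is a polynomial, hence analytic on the open unit disc $\Omega$, and is a $\lambda$-eigenfunction of $D$, so $Q_j\in V_\lambda$; as the $Q_j$ are mutually orthogonal and $\dim V_\lambda=\ell+1$, there can be at most $\ell+1$ of them. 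Consequently the expansion of $F$ collapses to a finite linear combination of polynomials, and $F=F_{w,r}$ is a polynomial.

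The delicate point is the symmetry identity $\langle DF,Q_j\rangle_W=\langle F,DQ_j\rangle_W$ when $m<0$: here $F\in\bar{\mathcal V}$ may be singular at the endpoint $u=1$, and one must verify that the boundary terms produced by the two integrations by parts cancel. This is exactly the role of the defining conditions of $\mathcal V$ in \eqref{V}, which force the components $(\Psi F)_s$ to carry the compensating factors $(1-u)^{s-m-\ell}$; combined with the vanishing of the weight $W$ at the endpoints, this makes $D$ genuinely symmetric on $\bar{\mathcal V}$, so the displayed computation is legitimate. The other indispensable ingredient is the finiteness $\dim V_\lambda=\ell+1$, for without it the surviving sum $\sum_{\lambda_j=\lambda}\langle F,Q_j\rangle_W\,\|Q_j\|_W^{-2}\,Q_j$ could a priori be an infinite combination of polynomials and fail to be a polynomial itself.
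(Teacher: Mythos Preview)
Your argument is essentially the paper's: filter $\mathcal V$ by degree, diagonalize the symmetric operator $D$ on each graded piece to obtain an orthogonal basis $\{Q_j\}$ of polynomial $D$-eigenfunctions, expand $F\in\bar{\mathcal V}$ in this basis, kill the coefficients with $\lambda_j\ne\lambda$, and conclude via $\dim V_\lambda\le\ell+1$. The only difference is that the paper justifies the vanishing of the off-eigenvalue coefficients by applying $D$ termwise to the series (invoking analyticity of $F$ on $[0,1]$ to upgrade $L^2_W$-convergence to $C^\infty$-convergence on compacta of $(0,1)$) rather than through the symmetry pairing you use; incidentally, $F$ itself is analytic on all of $[0,1]$ and not singular at $u=1$ --- when $m<0$ it is the weight $W$, not $F$, that is singular there.
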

\begin{proof} Set
$${\mathcal V}_j=\{P\in {\mathcal V}: \deg P\le j\}.$$
Then ${\mathcal V}_j$ is $D$-stable. Since $D$ is symmetric ${\mathcal V}_j^\perp\cap{\mathcal V}_{j+1}$
is invariant under $D$, and there exists an orthonormal basis of
${\mathcal V}_j^\perp\cap{\mathcal V}_{j+1}$ of eigenvectors of $D$.
Then by induction on $j\ge0$ it follows that there exists an
orthonormal basis $\{P_i\}$ of ${\mathcal V}$ such that
$DP_i=\lambda_iP_i$, $\lambda_i\in\CC$.

By Proposition \ref{Fenclausura} $F\in\bar{\mathcal{V}}$. Therefore
$$F=\sum_{j=0}^\infty a_i P_i,\qquad a_i=\langle F,P_i\rangle_W.$$

Since $F$ is analytic on $[0,1]$ the series converges not only in
$L^2_W$ but also in the topology based on uniform convergence of
sequences of functions and their successive derivatives on compact
subsets of $(0,1)$. Therefore
$$\lambda F=D F=D\Big(\sum_{j=0}^\infty a_i
P_i\Big)=\sum_{i=0}^\infty a_i\lambda_i P_i,$$ and thus $a_i=0$
unless $\lambda=\lambda_i$. Then
$$F=\sum_{\lambda=\lambda_i}a_i P_i,$$
and since $\dim V_\lambda<\infty$ we conclude that the function $F$
is a polynomial. \qed
\end{proof}

\begin{prop} \label{leading}
The function $F_{w,r}$  is a polynomial function of degree $w$ and
its leading coefficient is of the form
$F_w=(x_0,\dots,x_r,0,\dots,0)^t$ with $x_r\ne0$.
\end{prop}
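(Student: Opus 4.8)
The plan is to read off both eigenvalue equations $DF_{w,r}=\lambda(w,r)F_{w,r}$ and $EF_{w,r}=\mu(w,r)F_{w,r}$ at the top degree and to recognize the two resulting scalar conditions as the two plane curves of Proposition \ref{Bezout}. By Theorem \ref{esfpolinom} the function $F=F_{w,r}$ is a genuine polynomial, so I may write $F=\sum_{j=0}^N u^jF^{(j)}$ with $F^{(N)}\neq0$, where $N=\deg F\ge0$ (indeed $F^{(0)}=F_{w,r}(0)\neq0$). The goal is to show $N=w$ and that $F^{(N)}$ is supported on the indices $\{0,\dots,r\}$ with nonzero $r$-th entry.

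First I would substitute this expansion into $DF=\lambda F$, with $D$ as in \eqref{DD,EE} and $U,C,V$ as in Proposition \ref{D}, and compare coefficients of $u^j$. Since $F^{(N+1)}=0$, the coefficient of $u^N$ collapses to
\[ \big(N(N-1)I+NU+V+\lambda I\big)F^{(N)}=0 .\]
Write $T_N$ for this matrix. As $U$ is diagonal and $V$ is upper bidiagonal with superdiagonal entries $-(\ell-s)(n+s-k)$, which are nonzero for $0\le s\le\ell-1$ because $1\le k\le n-1$, the matrix $T_N$ is upper bidiagonal with nonzero superdiagonal. Reading the rows of $T_N F^{(N)}=0$ from the bottom up forces $F^{(N)}$ to be supported on an initial segment $\{0,\dots,s_0\}$ with nonzero top entry $x_{s_0}$, and the $s_0$-th row then forces the diagonal entry $(T_N)_{s_0s_0}$ to vanish. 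A direct computation identifies $(T_N)_{ss}$ with $F(N,s)$, the first curve of Proposition \ref{Bezout}; hence $F(N,s_0)=0$.

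Next I would repeat this for $EF=\mu F$, with $E$ as in \eqref{DD,EE} and $M_1,P_1$ from Proposition \ref{E}. The coefficient of $u^N$ gives
\[ \big(N(N-1)M_1+NP_1+(m-k)V+\mu I\big)F^{(N)}=0 ,\]
a matrix that is again upper bidiagonal. Since this is the same vector $F^{(N)}$ with the same top index $s_0$, its $s_0$-th row forces the corresponding diagonal entry to vanish, and a direct computation identifies that diagonal entry with $G(N,s)$ evaluated at $s=s_0$, where $G$ is the second curve of Proposition \ref{Bezout}. Thus $(N,s_0)$ is a common zero of $F$ and $G$.

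Finally I would invoke the Bezout analysis already performed in the proof of Proposition \ref{Bezout}: the six listed points are the only common zeros of $F$ and $G$. Imposing that $N$ be a nonnegative integer (a polynomial degree) and $0\le s_0\le\ell$ eliminates five of them, since in each of those the first or the second coordinate is strictly negative, using $w+m+r\ge0$, $0\le r\le\ell$ and $1\le k\le n-1$. Only $(N,s_0)=(w,r)$ survives, which gives $\deg F_{w,r}=w$ and $F^{(w)}=(x_0,\dots,x_r,0,\dots,0)^t$ with $x_r\neq0$, as claimed. The routine part is the coefficient bookkeeping and the verification that $(T_N)_{ss}=F(N,s)$ and that the $E$-diagonal equals $G(N,s)$, matched exactly against the explicit matrices of Propositions \ref{D} and \ref{E}. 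The conceptual point, and the main obstacle to get right, is that $DF=\lambda F$ alone yields only $F(N,s_0)=0$, a single relation between the two unknowns $N$ and $s_0$; it is the second operator $E$ that supplies the companion relation $G(N,s_0)=0$, so that Proposition \ref{Bezout} can pin down $(N,s_0)$ uniquely.
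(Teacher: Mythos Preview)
Your proof is correct and follows essentially the same route as the paper's: extract the two upper-bidiagonal conditions on the leading coefficient from $DF=\lambda F$ and $EF=\mu F$, identify their diagonals with $\lambda(w,r)-\lambda(d,s)$ and $\mu(w,r)-\mu(d,s)$, and then invoke Proposition~\ref{Bezout} to pin down $(d,s_0)=(w,r)$. Your treatment of the $E$-equation is in fact slightly cleaner than the paper's: rather than splitting into the two cases according to whether the superdiagonal factor $d-m+k$ vanishes, you use directly that the support of $F^{(N)}$ has already been determined by the $D$-equation, so the $s_0$-th row of the $E$-condition reads off $(L_2)_{s_0s_0}x_{s_0}=0$ regardless of the superdiagonal, yielding $\mu(w,r)=\mu(N,s_0)$ without further casework.
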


\begin{proof}
From Theorem \ref{esfpolinom} we have that $F=F_{w,r}$ is a
polynomial function; let us say that it is of degree $d$ and leading
coefficient $F_d$. From Theorem \ref{caracterizacion} we know that
$F$ is an eigenfunction of the differential operators $D$ and $E$,
namely
$$ DF=\lambda(w,r)F,  \qquad EF=\mu(w,r)F.$$
By looking at the leading coefficient of the polynomials in the above identities we have that
\begin{align*}
  & \left(d(d-1+U)+V+\lambda(w,r) \right)F_d=0,\\
  & \left(d(d-1)M_1+dP_1+(m-k)V+\mu(w,r) \right)F_d=0.
\end{align*}
Let us introduce the matrices
\begin{align*}
  L_1 &= d(d-1+U)+V+\lambda(w,r) \\
  &= \sum_{s=0}^\ell (\lambda(w,r)-\lambda(d,s)) E_{ss} + \sum_{s=0}^{\ell-1}(\ell-s)(n+s-k) E_{s,s+1},
\\
  L_2 &= d(d-1)M_1+dP_1+(m-k)V+\mu(w,r)  \\
  &= \sum_{s=0}^\ell (\mu(w,r)-\mu(d,s)) E_{ss} + \sum_{s=0}^{\ell-1}(\ell-s)(n+s-k) (d-m+k)E_{s,s+1}.
\end{align*}

Now we observe that $L_1$ has a nontrivial kernel if and only if $$\lambda(w,r)=\lambda(d,j) \quad \text{ for some } \quad 0\leq j\leq \ell.$$
In this case, if $s=\min\{j:\lambda(w,r)=\lambda(d,j)\}$ and $0\neq{\bf x}=(x_0, \dots, x_\ell)\in \ker(L_1)$, then $x_{s}\neq 0$ and
$x_j=0$ for $s+1\leq j\leq \ell$; this is so because the elements above te main diagonal of $L_1$ are not zero.

Similarly, $L_2$ has a nontrivial kernel if and only if  $\mu(w,r)=\mu(d,j)$ for some $0\leq j\leq \ell$.
Now the elements above the main diagonal of $L_2$ are all zero or all are not zero.
In the first case since $F_d\in \ker(L_1)\cap\ker(L_2)$ we must have $\mu(w,r)=\mu(d,s)$.
In the second case if $s'=\min\{j:\mu(w,r)=\mu(d,j)\}$ and $0\neq{\bf x}=(x_0, \dots, x_\ell)\in \ker(L_2)$,
then $x_{s'}\neq 0$ and $x_j=0$ for $s'+1\leq j\leq \ell$. Since $F_d\in \ker(L_1)\cap\ker(L_2)$ we must have $s=s'$.
In any case  we have $\lambda(w,r)=\lambda(d,s)$ and  $\mu(w,r)=\mu(d,s)$.

From Proposition \ref{Bezout}  we have that $(w,r)=(d,s)$. In
particular $d=\text{deg}(F_{w,r})=w$ and the leading coefficient is
of the required form. \qed
\end{proof}

\section{Matrix  orthogonal polynomials}

\noindent From now on we shall assume that $m\ge0$.

In this section we package appropriately  the functions $F_{w,r}$ associated to the spherical functions
$\Phi^{\mm(w,r), \kk}$ of one step $K$-type, for $0\leq w$ and $0\leq r \leq \ell$,  in order to obtain  families of examples of matrix-valued orthogonal
polynomials that are eigenfunctions of certain second-order differential operators.

It is important to remark that these examples arise naturally from the
group representation theory, in particular from  spherical functions.

\smallskip
The theory of matrix-valued orthogonal polynomials, without any consideration of differential equations goes back to M. G. Krein, in \cite{K1} and \cite{K2}.
Starting with a selfadjoint positive definite weight matrix  $W=W(u)$, with finite moments we can consider the skew-symmetric bilinear form defined
for matrix-valued polynomials $P$ and $Q$ by
$$\langle P,Q\rangle_W=\int_\RR P(u)W(u)Q(u)^*\,du=0.$$
By the usual construction this leads to the existence of  sequences $(P_w)_{w\geq 0}$ of matrix valued orthogonal polynomials
with non singular leading coefficients and $\deg  P_w=w$.

Any  sequence of matrix orthogonal polynomials $(P_w)_w$
satisfies a three-term recursion relation of the form
\begin{equation}\label{eq1}
uP_w(u) = A_wP_{w-1}(u) + B_wP_w(u) + C_wP_{w+1}(u),\quad w \ge 0,
\end{equation}
where we put $P_{-1}(u) = 0$.

In \cite{D} the study of matrix-valued orthogonal polynomials that are eigenfunctions of certain
second-order symmetric differential operator
was started.
A differential operator with matrix
coefficients acting on matrix-valued functions could be made to act  either
on the left or on the right. One finds a discussion of these two
actions in \cite{D}. The conclusion there is that if one wants to
have matrix weights $W(u)$ that are not direct sums of scalar ones and
that have matrix polynomials as their eigenfunctions, one should
settle for right-hand side differential operators. We agree now that $D$ given by
$$D=\sum_{i=0}^s \partial^iF_i(u),\quad\quad \partial=\frac{d}{du},$$
acts on $P(u)$ by means of
$$ P D = \sum_{i=0}^s \partial^i(P)(u)F_i(u).$$
One could make $D$ act on $P$ on the right as defined above, and
still write down the symbol $DP$ for the result. The advantage of
using the notation $PD$ is that it respects associativity: if $D_1$
and $D_2$ are two differential operators, we have
$P(D_1D_2)=(PD_1)D_2$.

\

We  define the matrix polynomial $P_w$ as the $(\ell+1)\times(\ell+1)$ matrix whose
$r$-row is the polynomial $F_{w,r} (u)$, for $0\leq r \leq \ell$. In other words
\begin{equation}\label{presidente}
P_w=(F_{w,0},\dots,F_{w,\ell})^t.
\end{equation}

An explicit expression for the rows of $P_w$ is given in Theorem \ref{caracterizacion},
in terms of the matrix hypergeometric function,
namely
\begin{equation*}\label{final3}
F_{w,r}(u)={}_2\!H_1 \!\left( \begin{smallmatrix} U\,;\,V+\lambda(w,r)\\
U-C\end{smallmatrix} ; u\right)F_{w,r}(0),
\end{equation*}
where its value $F_{w,r}(0)$ at $u=0$ is the $\mu(w,r)$-eigenvector
of $M(\lambda(w,r))$ properly normalized.

We notice that  from  Proposition \ref{ortogonalidad} the rows of $P_w$ are orthogonal with respect to the inner product $\langle \cdot,\cdot\rangle_W$;
then the sequence $(P_w)_w$ is orthogonal  with respect to the weight matrix $W(u)$,
$$\langle P_w,P_{w'}\rangle_W=\int_0^1P_w(u)W(u)P_{w'}(u)^*\,du=0, \quad \text{ for all $w\ne w'$}.$$

From Proposition \ref{esfpolinom} we obtain that each row of $P_w$ is a polynomial function of degree $w$.
A more careful look at the definition  shows that $P_w$ is a matrix polynomial  whose  leading coefficient is
a lower triangular nonsingular  matrix, see Proposition \ref{leading}. In other words this sequence of matrix-valued polynomials
fits squarely within Krein's theory and we obtain the following proposition.

\begin{prop}
  The matrix polynomial functions $P_w$,  whose $r$-row is the polynomial $F_{w,r}$, $0\leq r\leq \ell$, form an sequence of orthogonal
  matrix polynomials with respect to $W$.
\end{prop}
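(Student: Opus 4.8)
The plan is to assemble the statement from three facts already established in this section: the row-by-row definition \eqref{presidente} of $P_w$, the orthogonality of the scalar functions $F_{w,r}$ proved in Proposition \ref{ortogonalidad}, and the degree and leading-coefficient information of Proposition \ref{leading}. Since $m\ge0$ is assumed throughout this section, the weight $W(u)$ has finite moments of all orders, so the integrals defining $\langle P_w,P_{w'}\rangle_W$ converge and Krein's framework recalled above applies. There is essentially no new analysis to perform; the task is to match conventions and cite.

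First I would unwind the matrix pairing entrywise. Writing the $r$-row of $P_w$ as $F_{w,r}(u)^t$, the $(r,r')$ entry of $\langle P_w,P_{w'}\rangle_W=\int_0^1 P_w(u)W(u)P_{w'}(u)^*\,du$ is
$$\int_0^1 F_{w,r}(u)^t\,W(u)\,\overline{F_{w',r'}(u)}\,du.$$
This quantity is a scalar, hence equals its own transpose, and since $W(u)$ is real symmetric it therefore equals
$$\int_0^1 F_{w',r'}(u)^*\,W(u)\,F_{w,r}(u)\,du=\langle F_{w,r},F_{w',r'}\rangle_W,$$
the inner product of the associated vector-valued functions. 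Thus the whole matrix $\langle P_w,P_{w'}\rangle_W$ has $(r,r')$ entry equal to $\langle F_{w,r},F_{w',r'}\rangle_W$.

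The orthogonality $\langle P_w,P_{w'}\rangle_W=0$ for $w\ne w'$ then follows at once from Proposition \ref{ortogonalidad}: when $w\ne w'$ we have $(w,r)\ne(w',r')$ for every choice of $r,r'$, so each entry vanishes. It remains to verify that $(P_w)_w$ is a genuine matrix orthogonal polynomial sequence in Krein's sense. By Proposition \ref{leading} each row $F_{w,r}$ is a polynomial of degree $w$, so $P_w$ is a matrix polynomial of degree $w$; moreover the leading coefficient of $F_{w,r}$ has the form $(x_0,\dots,x_r,0,\dots,0)^t$ with $x_r\ne0$, which means the leading coefficient of $P_w$ (the matrix whose $r$-row is that vector) is lower triangular with nonzero diagonal entries, hence nonsingular.

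I expect the only delicate point to be the entrywise identification in the second step, namely keeping the transpose and conjugate conventions of the matrix pairing $\int P W Q^*$ consistent with those of the vector pairing $\int Q^* W F$, and invoking the symmetry of $W(u)$ to line them up. Once that bookkeeping is settled, everything else is a direct citation of the earlier results, and the triangular leading-coefficient observation places the sequence $(P_w)_w$ squarely within the classical theory of matrix orthogonal polynomials recalled at the start of this section, so that the three-term recursion \eqref{eq1} is available as well.
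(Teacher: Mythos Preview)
Your proposal is correct and follows essentially the same route as the paper, which establishes the proposition in the paragraphs immediately preceding it by citing Proposition~\ref{ortogonalidad} for orthogonality and Proposition~\ref{leading} for the degree and nonsingular lower-triangular leading coefficient, then invoking Krein's framework. Your additional entrywise bookkeeping (unwinding the $(r,r')$ entry of $\langle P_w,P_{w'}\rangle_W$ and using the symmetry of $W(u)$) is a welcome clarification that the paper leaves implicit.
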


\smallskip
In \cite{D} a sequence of matrix orthogonal polynomials is called classical if there exists a symmetric differential operator of
order-two that has these polynomials as eigenfunctions with a matrix eigenvalue.

In the next proposition we show that the $(P_w)_w$ is a family of classical orthogonal polynomials
featuring two algebraically independent symmetric differential operators of order two.

\begin{prop} The polynomial function $P_w$ is an eigenfunction of the differential operator $D^t$ and $E^t$, the transposes of the operators
$D$ and $E$ appearing in \eqref{DD,EE}. Moreover
$$ P_wD^t=\Lambda_w(D^t)P_w,\qquad P_wE^t=\Lambda_w(E^t)P_w,$$
  where
$\Lambda_w(D^t)=\text{diag}(\lambda(w,0),\dots,\lambda(w,\ell))$ and
$\Lambda_w(E^t)=\text{diag}(\mu(w,0),\dots,\mu(w,\ell))$.
\end{prop}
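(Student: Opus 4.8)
The plan is to reduce the two matrix eigenvalue equations to the scalar-type eigenvalue equations already available for the column vectors $F_{w,r}$, handled one row at a time. By Theorem \ref{caracterizacion} each $F_{w,r}$ is a simultaneous eigenfunction of the operators $D$ and $E$ of \eqref{DD,EE}, acting on $\CC^{\ell+1}$-valued column functions on the left; explicitly,
$$DF_{w,r}=\lambda(w,r)F_{w,r},\qquad EF_{w,r}=\mu(w,r)F_{w,r},$$
with $\lambda(w,r)$ and $\mu(w,r)$ as in \eqref{autovalores3}. Since $P_w$ is assembled so that its $r$-th row is $F_{w,r}^t$, see \eqref{presidente}, all I need is to transport these identities from columns to rows and keep track of the diagonal eigenvalue.

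The one step that demands care, and which I regard as the crux of the argument, is the transpose/right-action bookkeeping. Writing $D=u(1-u)\partial^2+(U-C-uU)\partial-V$ with the coefficient matrices acting on the left, the transpose $D^t$ is by definition the right-acting operator whose coefficient matrices are $(u(1-u)I)^t$, $(U-C-uU)^t$ and $(-V)^t$, so that for a row function $P$ one has $PD^t=u(1-u)P''+P'(U-C-uU)^t-PV^t$. A direct transposition, using $(F')^t=(F^t)'$, then shows that for any column function $F$ and $P=F^t$ one has $(DF)^t=PD^t$; the identical computation applies to $E$ and $E^t$, whose three coefficient matrices have the same structure. This is a purely formal verification once the right-action convention of the matrix-orthogonal-polynomial section is in force, but it is where one must be most careful.

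With this identity in hand I finish as follows. Taking transposes in $DF_{w,r}=\lambda(w,r)F_{w,r}$ gives $F_{w,r}^tD^t=\lambda(w,r)F_{w,r}^t$, and likewise $F_{w,r}^tE^t=\mu(w,r)F_{w,r}^t$. Because differentiation and right-multiplication by the matrix coefficients of $D^t$ act on each row of $P_w$ independently, the $r$-th row of $P_wD^t$ equals $F_{w,r}^tD^t=\lambda(w,r)F_{w,r}^t$, that is, $\lambda(w,r)$ times the $r$-th row of $P_w$. This is exactly the assertion $P_wD^t=\Lambda_w(D^t)P_w$ with $\Lambda_w(D^t)=\text{diag}(\lambda(w,0),\dots,\lambda(w,\ell))$, since left-multiplication by this diagonal matrix scales the $r$-th row by $\lambda(w,r)$. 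The companion identity $P_wE^t=\Lambda_w(E^t)P_w$, with $\Lambda_w(E^t)=\text{diag}(\mu(w,0),\dots,\mu(w,\ell))$, follows in precisely the same way, completing the proof.
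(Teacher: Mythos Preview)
Your proof is correct and follows essentially the same route as the paper: both arguments transpose the column eigenvalue equations $DF_{w,r}=\lambda(w,r)F_{w,r}$ and $EF_{w,r}=\mu(w,r)F_{w,r}$ to obtain row identities, then assemble the rows into $P_w$ to read off the diagonal eigenvalue matrices. The paper does this slightly more tersely by writing $P_w=(F_{w,0},\dots,F_{w,\ell})^t$ and computing $P_wD^t$ in one block, but the content is identical to your row-by-row verification.
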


\begin{proof}
We have
\begin{align*}
  D^t&=\partial^2u(1-u)+\partial(U^t-C^t-uU^t)-V^t,\\
E^t&=\partial^2(1-u)(M_0^t-M_1^t+uM_1^t)+\partial(P_1^t-P_0^t-uP_1^t)-(m-k)V^t,
\end{align*}
where the coefficient matrices are  given explicitly in Propositions \ref{D} and \ref{E}.

\noindent Now we observe that
\begin{align*}
P_wD^t&=\;u(1-u)P''_w+P'_w(U^t-C^t-uU^t)-P_wV^t\\
&=u(1-u)(F''_{w,0},\dots,F''_{w,\ell})^t+(F'_{w,0},\dots,F'_{w,\ell})^t(U-C-uU)^t\\
&\qquad -(F_{w,0},\dots,F_{w,\ell})^tV^t\\
&=\;(DF_{w,0},\dots,DF_{w,\ell})^t=(\lambda(w,0)F_{w,0},\dots,\lambda(w,\ell)F_{w,\ell})^t \\
&=\Lambda_w(D^t)P_w,
\end{align*}
where
$\Lambda_w(D^t)=\text{diag}(\lambda(w,0),\dots,\lambda(w,\ell))$.

Similarly $P_wE^t=\Lambda_w(E^t)P_w$ where
$\Lambda_w(E^t)=\text{diag}(\mu(w,0),\dots,\mu(w,\ell))$ and this concludes the proof. \qed
\end{proof}

\smallskip

In general given a matrix weight $W(u)$ and a sequence of matrix orthogonal polynomials $P_w$ one can  dispense of the requirement of  symmetry and
consider the algebra of all matrix  differential operators $D$ such that $$P_w D = \Lambda_w(D) P_w,$$
where each $\Lambda_w$ is a matrix. See Section 4 of \cite{GT}.
The set of these $D$ is denoted by ${\mathcal D}(W)$.

Starting with \cite{GPT4}, \cite{GPT2}, \cite{G1} and \cite{DG} one has a
growing collection of weight matrices $W(u)$ for which the algebra
$\mathcal D(W)$ is not trivial, i.e., does not consist only of
scalar multiples of the identity operator.
     A first attempt to go beyond the issue of the existence of one
nontrivial element in $\mathcal D(W)$ was undertaken in \cite{CG}, for some examples  whose weights  are  of size two. The study
of the full algebra, in one of this cases was considered in \cite{T3}.

\subsection{The three-term recursion relation satisfied by the polynomials $P_w$}
\mbox{}

\noindent The aim of this subsection is to indicate how to obtain, from the representation theory, the three term recursion relation  satisfied
by the sequence of matrix orthogonal polynomials $(P_w)_w$ built up from packages of spherical functions of the pair $(G,K)$.
More details can be found in \cite{PT1} for the case $n=2$ and in \cite{P2} for the general case.

\smallskip
The standard representation of  $\U(n+1)$ on  $\CC^{n+1}$ is  irreducible and its
highest weight is $(1,0,\dots , 0)$. Similarly the representation of  $\U(n+1)$ on the dual of
$\CC^{n+1}$ is  irreducible and its highest weight is $(0,\dots,0 , -1)$. Therefore we have that
$$\CC^{n+1}=V_{(1,0,\cdots, 0)}\quad \text{ and }\quad (\CC^n)^*=V_{(0,\dots,0,-1)}.$$

For any irreducible representation of  $\U(n+1)$ of highest weight  $\mm$,   the tensor product
$V_\mm\otimes \CC^{n+1}$  decomposes as a direct sum of $\U(n+1)$-irreducible representations in the following way:
\begin{equation}\label{tensor1}
  V_\mm\otimes \CC^{n+1}\simeq V_{\mm+\ee_1}\oplus V_{\mm+\ee_2}\oplus \cdots \oplus V_{\mm+\ee_{n+1}},
\end{equation}
and
\begin{equation}\label{tensor2}
  V_\mm\otimes (\CC^{n+1})^*\simeq V_{\mm-\ee_1}\oplus V_{\mm-\ee_2}\oplus \cdots \oplus V_{\mm-\ee_{n+1}},
\end{equation}
where $\{\ee_1,\cdots,\ee_{n+1}\}$ is the canonical basis of $\CC^{n+1}$, see \cite{VK}.
\begin{remark} The irreducible modules on the right-hand side of \eqref{tensor1} and \eqref{tensor2} whose
parameters $(m'_1,m'_2,\dots ,m'_{n+1})$ do not
satisfy the conditions $m_1'\geq m_2'\geq \dots \geq m_{n+1}'$ have to be omitted.
\end{remark}

Let $\kk=(k_1,\dots , k_n)$ be the highest weight of an irreducible representation of $\U(n)$ contained in the representation
 $\mm=(m_1,\dots , m_{n+1})$ of $\U(n+1)$.
From \eqref{tensor1} and \eqref{tensor2}, the following multiplication formulas are proved in \cite{P2}.

\begin{prop}\label{multiplicationformula}
 Let $\phi$ and $\psi$ be, respectively, the one-dimensional spherical functions associated to the
 standard representation of $G$ and its dual. Then
\begin{align*}
\phi(g)\Phi^{\bf m,\kk}(g)&=\sum_{i=1}^{n+1}a_i^2(\mm, \kk) \Phi^{{\mathbf m}+{\mathbf
e}_i, \kk}(g),\\
\psi(g)\Phi^{\bf m, \kk}(g)&=\sum_{i=1}^{n+1}b_i^2(\mm, \kk) \Phi^{{\bf m}-{\bf
e}_i,\kk}(g).
\end{align*}
The constants $a_i(\mm, \kk)$ and $b_i(\mm, \kk)$  are given by
\begin{equation}\label{coefficientsab}
\begin{split}
a_i(\mm,\kk)&=\left\vert\frac{\prod_{j=1}^{n}(k_j-m_i-j+i-1)}
{\prod_{j\ne i}(m_j-m_i-j+i)}\right\vert^{1/2},\\
b_i(\mm,\kk)&=\left\vert\frac{\prod_{j=1}^{n}(k_j-m_i-j+i)}
{\prod_{j\ne i}(m_j-m_i-j+i)}\right\vert^{1/2}.
\end{split}
\end{equation}
Moreover
\begin{equation}\label{sumauno}
\sum_{i=1}^{n+1}a_i^2(\mm,\kk)=\sum_{i=1}^{n+1}b_i^2(\mm,\kk)=1.
\end{equation}
\end{prop}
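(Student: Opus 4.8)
The plan is to realize every spherical function in the statement as a compression of an irreducible representation and to read the two identities off the Clebsch--Gordan decompositions \eqref{tensor1} and \eqref{tensor2}. First I would invoke the construction recalled in Section~1: an irreducible spherical function arises as $\Phi(g)a=P(\delta)U(g)a$. Concretely, let $U_\mm$ be the irreducible representation of $\U(n+1)$ on $V_\mm$, equipped with an invariant inner product, and let $W_\kk\subset V_\mm$ be the \emph{unique} copy of the $K$-type $\kk$ (unique by the multiplicity-one property quoted after \eqref{U(n-1)subrepresentations}). Writing $P_\kk$ for the orthogonal projection onto $W_\kk$, we have $\Phi^{\mm,\kk}(g)=P_\kk U_\mm(g)|_{W_\kk}$. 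For the defining representation $\CC^{n+1}=V_{(1,0,\dots,0)}$ the line $\CC u_0$ through the last basis vector is $K$-stable, with $K=\{\mathrm{diag}(A,(\det A)^{-1})\}$ acting by the character $(\det A)^{-1}$; the associated one-dimensional spherical function is $\phi(g)=\langle U_{\mathrm{std}}(g)u_0,u_0\rangle$, and $\psi$ comes from the dual line in $(\CC^{n+1})^*$. Since $\phi$ is scalar, a direct computation with the rank-one projection onto $\CC u_0$ shows that $\phi(g)\Phi^{\mm,\kk}(g)$ is exactly the compression of $U_\mm(g)\otimes U_{\mathrm{std}}(g)$ to the $K$-stable subspace $W_\kk\otimes\CC u_0\cong V_\kk$.

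Next I would feed in \eqref{tensor1}, $V_\mm\otimes\CC^{n+1}\simeq\bigoplus_i V_{\mm+\ee_i}$, with orthogonal projections $Q_i$. Let $P$ denote the projection onto $W_\kk\otimes\CC u_0$. Because $U_\mm\otimes U_{\mathrm{std}}$ commutes with each $Q_i$ and $\sum_iQ_i=I$, the compression splits as $PU(g)P=\sum_i(PQ_i)U(g)(Q_iP)$. Each $V_{\mm+\ee_i}$ contains the type $\kk$ at most once, so by Schur's lemma $Q_iP=c_i\iota_i$ for a scalar $c_i$ and a fixed isometric $K$-embedding $\iota_i$ of $W_\kk\otimes\CC u_0$ onto the $\kk$-copy inside $V_{\mm+\ee_i}$; then $PQ_i=\overline{c_i}\,\iota_i^*$ and the $i$-th term becomes $|c_i|^2\Phi^{\mm+\ee_i,\kk}(g)$. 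This already proves the first multiplication formula with nonnegative coefficients $a_i^2:=|c_i|^2$, the summands with $\kk\not\subset V_{\mm+\ee_i}$ dropping out automatically; note the $K$-types match on both sides, since the character carried by $u_0$ shifts $s_\mm$ by one, which is exactly the shift between the $K$-types of $\Phi^{\mm,\kk}$ and $\Phi^{\mm+\ee_i,\kk}$. Evaluating at $g=e$, where $\phi(e)=1$ and every spherical function equals the identity, gives $\sum_i a_i^2=1$ at once, and the identical argument applied to \eqref{tensor2} yields both the formula for $\psi$ and $\sum_i b_i^2=1$.

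The remaining and genuinely computational step is to evaluate $|c_i|^2$ in closed form. Here $c_i$ is the Wigner (Clebsch--Gordan) coefficient measuring the overlap of the $K$-extreme vector $w\otimes u_0$ with the $\kk$-isotypic copy inside $V_{\mm+\ee_i}$, for the coupling $\U(n+1)\supset\U(n)$ with the defining representation. I would carry this out in the Gelfand--Tsetlin realization of $V_\mm$, applying the coupling operators built from $E_{i,n+1}$ to a highest-weight vector of $W_\kk$ and projecting onto $V_{\mm+\ee_i}$; the resulting ratios of products of linear factors in the $m_j$ and $k_j$ collapse to $a_i(\mm,\kk)=\bigl|\prod_{j}(k_j-m_i-j+i-1)/\prod_{j\ne i}(m_j-m_i-j+i)\bigr|^{1/2}$, with the dual computation giving $b_i$. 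I expect this to be the main obstacle: the bookkeeping of the Gelfand--Tsetlin matrix elements and the verification that the product telescopes to the displayed hook-type expression is lengthy, which is presumably why the full details are deferred to \cite{P2} (and to \cite{PT1} in the case $n=2$). Two consistency checks I would record are that the numerator $\prod_j(k_j-m_i-j+i-1)$ vanishes precisely when $\kk$ fails to interlace $\mm+\ee_i$, so the formula is compatible with the omission of the non-dominant summands noted in the Remark, and that the closed forms satisfy $\sum_i a_i^2=\sum_i b_i^2=1$ as a polynomial identity in the $m_j,k_j$, reconfirming \eqref{sumauno} independently of the evaluation at $g=e$.
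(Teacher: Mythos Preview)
Your outline is sound, and in fact goes further than the paper itself: the paper does not prove this proposition but simply states it, attributing the proof to \cite{P2} (and to \cite{PT1} for $n=2$). Your approach---realize $\phi(g)\Phi^{\mm,\kk}(g)$ as the compression of $U_\mm\otimes U_{\mathrm{std}}$ to $W_\kk\otimes\CC u_0$, split along the Clebsch--Gordan decomposition \eqref{tensor1}, and invoke multiplicity one plus Schur to get scalar coefficients $|c_i|^2$---is the natural one and is almost certainly what \cite{P2} does. Your verification that the $K$-types match (via the shift in $s_\mm$), the argument that $PU(g)P=\sum_iPQ_iU(g)Q_iP$ because $U(g)$ commutes with the $Q_i$, and the evaluation at $g=e$ to obtain \eqref{sumauno} are all correct.

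You are also right that the only substantive gap is the closed-form evaluation of $|c_i|^2$ as the displayed product, and that this is a Gelfand--Tsetlin computation for the coupling $\U(n+1)\supset\U(n)$ with the standard representation. These coefficients are classical (they appear for instance in \cite{VK}), so an alternative to carrying out the computation yourself is simply to cite the known formula for the reduced matrix elements of $E_{i,n+1}$ in the Gelfand--Tsetlin basis. Your two consistency checks---vanishing of the numerator exactly when $\kk$ fails to interlace $\mm+\ee_i$, and the polynomial identity $\sum_i a_i^2=1$---are good sanity checks but do not by themselves pin down the coefficients, so you would still need either the explicit computation or the citation.
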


By combining both multiplication formulas given in the previous theorem we obtain the following identity involving different spherical
functions of the same $K$-type.

\begin{cor}\label{ntermrecursion}
$$\phi(g)\psi(g)\Phi^{\bf m,\kk}(g)=\sum_{i,j=1}^{n+1}a_j^2(\mm, \kk)b_i^2(\mm+\ee_j, \kk) \Phi^{{\mathbf m}+{\mathbf
e}_j-\ee_i, \kk}(g).$$
\end{cor}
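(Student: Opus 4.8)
\noindent The plan is to obtain the corollary by composing the two multiplication formulas of Proposition \ref{multiplicationformula}. The key structural observation is that $\phi$ and $\psi$ are one-dimensional, hence height-one, spherical functions, so their values $\phi(g)$ and $\psi(g)$ are scalars. Consequently the product $\phi(g)\psi(g)$ is commutative and commutes with the matrix $\Phi^{\mm,\kk}(g)$, which lets me apply the $\phi$-formula first and then feed each resulting summand into the $\psi$-formula, in either order, without worrying about the placement of the scalar factors.

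Concretely, the first step is to write, using the first formula of Proposition \ref{multiplicationformula},
$$\phi(g)\Phi^{\mm,\kk}(g)=\sum_{j=1}^{n+1}a_j^2(\mm,\kk)\,\Phi^{\mm+\ee_j,\kk}(g).$$
Multiplying through by the scalar $\psi(g)$ and then applying the second formula of Proposition \ref{multiplicationformula}, now with $\mm$ replaced by $\mm+\ee_j$ in each term, gives
$$\psi(g)\Phi^{\mm+\ee_j,\kk}(g)=\sum_{i=1}^{n+1}b_i^2(\mm+\ee_j,\kk)\,\Phi^{\mm+\ee_j-\ee_i,\kk}(g).$$
Substituting this back and merging the two finite sums into a single double sum over $1\le i,j\le n+1$ produces exactly
$$\phi(g)\psi(g)\Phi^{\mm,\kk}(g)=\sum_{i,j=1}^{n+1}a_j^2(\mm,\kk)\,b_i^2(\mm+\ee_j,\kk)\,\Phi^{\mm+\ee_j-\ee_i,\kk}(g),$$
which is the assertion.

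The two points that warrant a remark are bookkeeping rather than genuine obstacles. First, the second formula is used with the shifted parameter $\mm+\ee_j$, so one must note that Proposition \ref{multiplicationformula} holds for every admissible pair, not merely the original $\mm$; this is clear since the proposition is stated for a generic highest weight. Second, whenever a shifted parameter $\mm+\ee_j$ or $\mm+\ee_j-\ee_i$ fails the dominance condition $m_1'\ge\cdots\ge m_{n+1}'$, the interlacing of $\kk$ with the (shifted) weight forces a vanishing factor in the numerator of the corresponding coefficient in \eqref{coefficientsab}, so that term drops out automatically, in agreement with the Remark following \eqref{tensor2}. Thus the double sum is well defined and no spurious spherical functions appear, and the corollary follows immediately from the proposition, requiring no input beyond it.
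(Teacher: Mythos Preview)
Your proof is correct and is exactly the approach the paper intends: the corollary is stated immediately after Proposition~\ref{multiplicationformula} with the remark that it follows ``by combining both multiplication formulas,'' and you have simply spelled out that composition, together with the appropriate bookkeeping about degenerate parameters. There is nothing to add.
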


In this section we are restricting our attention to spherical functions of
type
$$ \kk=(\underbrace{m+\ell,\dots,m+\ell}_k,\underbrace{ m,\dots,m}_{n-k}),$$
with $m\geq 0$. Thus we only consider $(n+1)$-tuples $\mm$ of the  form
\begin{equation*}\label{mwr}
\mm(w,r)=(w+m+\ell,\underbrace{m+\ell,\dots,m+
\ell}_{k-1},m+r,\underbrace{m, \dots,m}_{n-k-1},-w-r),
\end{equation*}
for $0\le w$ and $0\le r\le\ell$.

Now we introduce the $rs$-entries of  the matrices $A_w, B_w$ and $C_w$:
\begin{align*}
     (A_w)_{rs} & =\begin{cases}
       a^2_{n+1}(\mm(w,r))b^2_{1}(\mm(w,r)+\ee_{n+1}) & \quad\text{ if } s=r\\
       a^2_{k+1}(\mm(w,r))b^2_{1}(\mm(w,r)+\ee_{k+1})  & \quad \text{ if } s=r+1\\
       0 & \quad  \text{ otherwise}
     \end{cases} \displaybreak[0]
\\
     (C_w)_{rs} & =\begin{cases}
       a^2_{1}(\mm(w,r))b^2_{d+1}(\mm(w,r)+\ee_{1})) & \quad\text{ if } s=r\\
       a^2_1(\mm(w,r))b^2_{k+1}(\mm(w,r)+\ee_1)  & \quad \text{ if } s=r-1\\
       0 & \quad  \text{ otherwise}
     \end{cases} \displaybreak[0]
\\
     (B_w)_{rs} & =\begin{cases}
      \displaystyle \sum_{1\leq j\leq n+1}a^2_j(\mm(w,r))b^2_j(\mm(w,r)+\ee_j)) & \quad\text{ if } s=r\\
       a^2_{k+1}(\mm(w,r))b^2_{n+1}(\mm(w,r)+\ee_{k+1})  & \quad \text{ if } s=r+1\\
       a^2_{n+1}(\mm(w,r))b^2_{k+1}(\mm(w,r)+\ee_{n+1}) & \quad\text{ if } s=r-1\\
       0 & \quad  \text{ otherwise}
     \end{cases}
  \end{align*}
where $a_i^2(\mm(w,\rr))=a_i^2(\mm(w,r),\kk)$,
$b_i^2(\mm(w,r)+\ee_j)=b_i^2((\mm(w,r)+\ee_j,\kk))$ for $1\le j\le
n+1$, see \eqref {coefficientsab}.

Notice that $A_w$ is an upper-bidiagonal matrix, $C_w$ is a lower-bidiagonal matrix and $B_w$ is a tridiagonal matrix.

\smallskip
For the benefit of the reader we
include the following simplified expressions, in the case of one-step $K$-type, of the coefficients involved in the definition of the matrices
$A_w,B_w$ and $C_w$:
\begin{align*}
  a_1^2(\mm(w,r))&= \frac{(w+k)(w+\ell+n)}{(w+\ell-r+k)(2w+m+n+\ell+r)},\\
  a_{k+1}^2(\mm(w,r))&= \frac{(\ell-r)(r+n-k)}{(w+\ell-r+k)(w+m+n+2r-k)},\\
  a_{n+1}^2(\mm(w,r))&=\frac{(w+m+n+\ell+r-k)(w+m+r)}{(w+m+n+2r-k)(2w+m+n+\ell+r)};
\end{align*}
all the others $a_j^2(\mm(w,r))$ are zero. The remaining coefficients are

\begin{align*}
  b_1^2(\mm(w,r)+\ee_1)&= \frac{(w+1)(w+\ell+k+1)}{(w+\ell-r+k+1)(2w+m+n+\ell+r+1)},\displaybreak[0]\\
  b_1^2(\mm(w,r)+\ee_{k+1})&= \frac{w(w+\ell+k)}{(w+\ell-r+k-1)(2w+m+n+\ell+r)},\displaybreak[0]\\
  b_1^2(\mm(w,r)+\ee_{n+1})&=\frac{w(w+\ell+k)}{(w+\ell-r+k)(2w+m+n+\ell+r-1)},
\end{align*}

\smallskip
\begin{align*}
  b_{k+1}^2(\mm(w,r)+\ee_1)&= \frac{r(\ell-r+k)}{(w+\ell-r+k+1)(w+m+n+2r-k)},\displaybreak[0]\\
  b_{k+1}^2(\mm(w,r)+\ee_{k+1})&= \frac{(r+1)(\ell-r+k-1)}{(w+\ell-r+k-1)(w+m+n+2r-k+1)},\displaybreak[0]\\
  b_{k+1}^2(\mm(w,r)+\ee_{n+1})&=\frac{r(\ell-r+k)}{(w+\ell-r+k)(w+m+n+2r-k-1)},
\end{align*}

\smallskip
\begin{align*}
  b_{n+1}^2(\mm(w,r)+\ee_1)&= \frac{(w+m+n+\ell+r)(w+m+n+r-k)}{(w+m+n+2r-k)(2w+m+n+\ell+r+1)},\displaybreak[0]\\
  b_{n+1}^2(\mm(w,r)+\ee_{k+1})&= \frac{(w+m+n+\ell+r)(w+m+n+r-k)}{(w+m+n+2r-k+1)(2w+m+n+\ell+r)},\displaybreak[0]\\
  b_{n+1}^2(\mm(w,r)+\ee_{n+1})&=\frac{(w+m+n+\ell+r-1)(w+m+n+r-k-1)}{(w+m+n+2r-k-1)(2w+m+n+\ell+r-1)}.
\end{align*}

\medskip
In the open subset $\{a(\theta)\in A:0<\theta<\pi/2\}$ of $A$,
we introduce the coordinate $t=\cos^2(\theta)$ and define on the
open interval $(0,1)$ the matrix-valued function
$$\Phi(w,t)=\big(\Phi^{\mm(w,r),\kk}_{s}(a(\theta))\big)_{0\leq r,s\leq \ell}.$$

\begin{prop} \label{multiplicationformula1} For   $0<t<1$  and  all integer $0\le w$ we have
\begin{equation*}
t\Phi(w,t)= A_w\Phi(w-1,t)+ B_w\Phi(w,t)+ C_w\Phi(w+1,t).
\end{equation*}
where we put $\Phi(-1,t) = 0$.
\end{prop}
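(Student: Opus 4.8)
The plan is to read off the recursion directly from Corollary \ref{ntermrecursion} by restricting to the subgroup $A$ and extracting, for each $0\le s\le\ell$, the scalar by which the spherical functions act on the $\U(n-1)$-submodule $V_{\mathbf t(s)}$. Applying Corollary \ref{ntermrecursion} to $\mm=\mm(w,r)$ and evaluating on $a(\theta)$ gives, on $V_{\mathbf t(s)}$,
\begin{equation*}
\phi(a(\theta))\psi(a(\theta))\,\Phi^{\mm(w,r),\kk}_s(a(\theta))=\sum_{i,j=1}^{n+1}a_j^2(\mm(w,r))\,b_i^2(\mm(w,r)+\ee_j)\,\Phi^{\mm(w,r)+\ee_j-\ee_i,\kk}_s(a(\theta)).
\end{equation*}
Since the $(r,s)$ entry of $\Phi(w,t)$ is exactly $\Phi^{\mm(w,r),\kk}_s(a(\theta))$, this will become the $(r,s)$ entry of the asserted matrix identity once the scalar $\phi(a(\theta))\psi(a(\theta))$ is identified and the shifted representations $\mm(w,r)+\ee_j-\ee_i$ are sorted out. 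I note first that $s_{\mm(w,r)+\ee_j-\ee_i}=s_{\mm(w,r)}=s_\kk$, so every summand is again a spherical function of $K$-type $\kk$ and the index $s$ labels the same decomposition \eqref{onestep2} throughout; this is what makes the extraction of the $V_{\mathbf t(s)}$-component unambiguous.

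First I would compute the scalar factor. The function $\phi$ is the scalar spherical function attached to the standard representation of $G$ on $\CC^{n+1}$ and to the one-dimensional $K$-type spanned by $e_{n+1}$, so $\phi(g)=g_{n+1,n+1}$; dually, $\psi(g)=\overline{g_{n+1,n+1}}$ comes from the contragredient representation. Reading off $[a(\theta)]_{n+1,n+1}=\cos\theta$ from \eqref{atheta} and recalling $t=\cos^2\theta$, I obtain $\phi(a(\theta))\psi(a(\theta))=\cos^2\theta=t$, which is precisely the factor appearing on the left-hand side of the claimed recursion.

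Next comes the bookkeeping. By the explicit expressions \eqref{coefficientsab} specialized to the one-step case (recorded just above the statement), $a_j^2(\mm(w,r))$ vanishes unless $j\in\{1,k+1,n+1\}$, and likewise $b_i^2(\mm(w,r)+\ee_j)$ forces $i\in\{1,k+1,n+1\}$, leaving nine pairs $(i,j)$. For each I would compute $\mm(w,r)+\ee_j-\ee_i$ and match it with some $\mm(w',r')$: adding $\ee_1$ raises $w$ by one and subtracting $\ee_1$ lowers it, while $\ee_{k+1}$ and $\ee_{n+1}$ move $r$. One finds $w'\in\{w-1,w,w+1\}$, and sorting the nine contributions by the value of $w'$ reproduces the three matrices exactly: $w'=w+1$ yields the diagonal and sub-diagonal entries of $C_w$, $w'=w$ yields the tridiagonal $B_w$ (the diagonal being $\sum_{j}a_j^2(\mm(w,r))b_j^2(\mm(w,r)+\ee_j)$ coming from the three pairs with $i=j$), and $w'=w-1$ yields the diagonal and super-diagonal entries of $A_w$, with coefficients agreeing term by term with the definitions. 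Assembling the $(r,s)$ identities over all $r$ and $s$ gives $t\Phi(w,t)=A_w\Phi(w-1,t)+B_w\Phi(w,t)+C_w\Phi(w+1,t)$.

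The step needing the most care, and the place where the convention $\Phi(-1,t)=0$ must be justified, is the boundary behaviour. When $r=0$ or $r=\ell$, or when $w=0$, some targets $\mm(w',r')$ fall outside the admissible range (for instance $r'=-1$, or $w'=-1$) and the corresponding modules are to be omitted as in the remark following \eqref{tensor2}. I expect to verify that in each such case the attached coefficient $a_j^2b_i^2$ already vanishes: for example $b_{k+1}^2(\mm(w,0)+\ee_1)=0$ kills the spurious $(C_w)_{0,-1}$ contribution, while $b_1^2(\mm(0,r)+\ee_{n+1})=b_1^2(\mm(0,r)+\ee_{k+1})=0$ forces $A_0=0$, in agreement with $\Phi(-1,t)=0$. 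Checking that these vanishings occur exactly at the out-of-range indices is the one point where the explicit formulas, rather than the abstract combinatorics of the $\ee_j-\ee_i$ shifts, have to be invoked.
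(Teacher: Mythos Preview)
Your proposal is correct and follows essentially the same route as the paper: apply Corollary \ref{ntermrecursion} at $g=a(\theta)$, compute $\phi(a(\theta))\psi(a(\theta))=\cos^2\theta=t$, and read off the matrices $A_w,B_w,C_w$ from the surviving $(i,j)$ pairs. The paper's own proof is much terser---it simply invokes the corollary and the definitions of the matrices---whereas you spell out the bookkeeping of the nine $(i,j)$ pairs and the boundary vanishings, which is helpful but not a different idea.
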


\begin{proof}  This result is a consequence of Corollary \ref{ntermrecursion}
and of the definitions of the matrices  $A_w, B_w, C_w$, when we take $g=a(\theta)$.

 We recall that  $\phi(g)$ and $\psi(g)$ are the one-dimensional spherical
 functions associated to the $G$-modules $\CC^{n+1}$ and $(\CC^{n+1})^*$, respectively.
 A direct computation gives
$$\phi(a(\theta))=\langle a(\theta)e_{n+1}, e_{n+1}\rangle =\cos \theta \quad \text{ and } \quad \psi(a(\theta))=\langle a(\theta)\lambda_{n+1},
\lambda_{n+1}\rangle =\cos \theta.$$ Then
$\phi(a(\theta))\psi(a(\theta))=\cos^2(\theta)=t$. \qed
\end{proof}

\medskip
Now we take into account that  the polynomial functions  $P_{w}$ introduced in \eqref{presidente}, are obtained by right-multiplying
the function   $\Phi(w,t)$ by a matrix function on $t$, independent of $w$.  After  the change of variable $t=1-u$
 we obtain the following three term recursion relation for the polynomials $P_w$.

\begin{thm}
  The polynomial functions $P_w(u)$ satisfy
  \begin{equation*}\label{3term}
(1-u)P_w(u)= A_w P_{w-1}(u)+ B_w P_{w}(u)+ C_wP_{w+1}(u),
\end{equation*}
for all  $w \ge 0$, where the matrices $A_w$, $B_w$ and $C_w$ were introduced before.
\end{thm}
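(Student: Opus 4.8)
The plan is to obtain the three-term recursion as an immediate consequence of the multiplication formula in Proposition \ref{multiplicationformula1}, by right-multiplying that identity by a single matrix function, independent of $w$, which converts the family $\Phi(w,t)$ into the polynomials $P_w$. The whole point is that the recursion matrices $A_w,B_w,C_w$ act on the left, mixing the spherical-function index, while the passage $\Phi(w,t)\mapsto P_w$ is a right multiplication acting on the $\U(n-1)$-type index; these two actions commute, so the recursion transfers verbatim.

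First I would make the passage from $\Phi(w,t)$ to $P_w$ completely explicit. Writing $H_{w,r}(t)=(h^{w,r}_0(t),\dots,h^{w,r}_\ell(t))^t$ for the vector function attached to $\Phi^{\mm(w,r),\kk}$, equation \eqref{asymptotic} gives the entries of $\Phi(w,t)$ as
$$\Phi^{\mm(w,r),\kk}_s(a(\theta))=t^{(m+\ell-s)/2}\,h^{w,r}_s(t),\qquad t=\cos^2\theta .$$
Hence, setting $T_0(t)=\sum_{s=0}^\ell t^{(m+\ell-s)/2}E_{ss}$ and letting $\mathbf H(w,t)$ be the matrix whose $r$-th row is $H_{w,r}(t)^t$, we have $\Phi(w,t)=\mathbf H(w,t)\,T_0(t)$, since the scalar factor depends only on the column index $s$. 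On the other hand, because $F_{w,r}=\Psi^{-1}H_{w,r}$ (with $\Psi$ as in Proposition \ref{D}) and $P_w$ is the matrix whose $r$-th row is $F_{w,r}^t$, see \eqref{presidente}, we get $P_w=\mathbf H(w,t)\,(\Psi^t)^{-1}$. Eliminating $\mathbf H(w,t)$ between these two relations yields
$$P_w=\Phi(w,t)\,R(t),\qquad R(t)=T_0(t)^{-1}(\Psi^t)^{-1},$$
and the decisive observation is that the right factor $R(t)$ does not depend on $w$.

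Next I would feed this into Proposition \ref{multiplicationformula1}. Right-multiplying the identity
$$t\,\Phi(w,t)=A_w\Phi(w-1,t)+B_w\Phi(w,t)+C_w\Phi(w+1,t)$$
by $R(t)$ and invoking associativity of matrix multiplication, together with the $w$-independence of $R(t)$, gives $t\,P_w=A_wP_{w-1}+B_wP_w+C_wP_{w+1}$. Here I would stress that $A_w,B_w,C_w$ multiply on the left and thus only recombine the rows (the spherical-function parameter $r$ at fixed $w$-shift, as produced by Corollary \ref{ntermrecursion} on each $V_{\mathbf t(s)}$), whereas $R(t)$ multiplies on the right and acts only on the column index $s$; this is exactly why the two operations pass through each other. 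Finally, the change of variable $t=1-u$ turns the relation into the asserted $(1-u)P_w(u)=A_wP_{w-1}(u)+B_wP_w(u)+C_wP_{w+1}(u)$, the convention $\Phi(-1,t)=0$ supplying $P_{-1}=0$ for the base case $w=0$.

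The main obstacle is the first step: one must verify carefully that a single right factor $R(t)$ serves every $w$, i.e. that the normalization $T_0(t)$ coming from $\Phi_\kk$ and the conjugating function $\Psi$ are genuinely independent of the spherical parameter $(w,r)$, and that the recursion matrices of Proposition \ref{multiplicationformula1} really leave the column index $s$ untouched. Once this clean separation of the left action (the recursion in $w$) from the right action (the fixed conjugation $R$) is in place, everything else is routine, and the remaining bookkeeping between the variables $t$ and $u=1-t$ presents no difficulty.
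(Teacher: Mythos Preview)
Your proposal is correct and follows exactly the route the paper takes: the paper simply states that $P_w$ is obtained from $\Phi(w,t)$ by right-multiplication by a matrix function independent of $w$, and then invokes the change of variable $t=1-u$, while you have spelled out this right factor explicitly as $R(t)=T_0(t)^{-1}(\Psi^t)^{-1}$ and checked that it is indeed $w$-free. Your emphasis on the left/right separation (the recursion matrices acting on the row index $r$, the conjugating factor acting on the column index $s$) is precisely the mechanism the paper is relying on but leaves implicit.
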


\smallskip
This three term recursion relation
can be written in the following way:
\begin{equation}\label{matrix}
(1-u)\begin{vmatrix}P_0\\P_1\\P_2\\P_3\\\cdot\end{vmatrix}
=\begin{vmatrix} B_0&C_0&0&\\A_1&B_1&C_1&0&\\0&A_2&B_2&C_2&0&\\ &0&A_3&B_3&C_3&0\\&&\cdot&\cdot&\cdot&\cdot&\cdot\end{vmatrix}
\begin{vmatrix}P_0\\P_1\\P_2\\P_3\\\cdot\end{vmatrix},
\end{equation}

From \eqref{sumauno} one can prove that
$$\sum_{s=0}^{\ell}\big((A_w)_{rs}+(B_w)_{rs}+(C_w)_{rs}\big)=1,$$
showing that the square semi-infinite matrix $M$ appearing in
\eqref{matrix} is an stochastic matrix.

\smallskip
 In \cite{GPT6} we describe a random mechanism
based on Young diagrams that gives rise to a random walk in the set
of all Young diagrams of $2n+1$ rows and whose $2j$-th row has $k_j$
boxes $1\le j\le n$, whereby in one unit of time one of the $m_i$ is
increased by one with probability $a_i^2(\mm,\kk)$ see
\eqref{coefficientsab}.

\medskip
In the expressions for $D,E,W,P_w$ and $M$ the discrete parameters
$(m,n)$ enter in a simple analytical fashion. Appealing to some
version of analytic continuation, it is clear that this entire
edifice remains valid if one allows $(m,n-1)$ to range over a
continuous set of real values  $(\alpha,\beta)$. The requirement
that $W$ retain the property of having finite moments  of all
orders translates into the conditions  $\alpha,\beta>-1$. We will
denote the corresponding weight and the orthogonal polynomials by
$W^{\beta,\alpha}$ and $P^{\beta,\alpha}_w$. These are some
interesting families of new matrix valued Jacobi polynomials.

\

\subsection*{Acknowledgments } \mbox{} \\ This paper was partially supported by CONICET, PIP 112-200801-01533 and by Secyt-UNC.

\



\end{document}